\numberwithin{equation}{section}
\theoremstyle{plain}
\newtheorem{thm}{Theorem}[section]
\newtheorem{prop}[thm]{Proposition}
\newtheorem{cor}[thm]{Corollary}
\newtheorem{lemma}[thm]{Lemma}
\newtheorem{defi}[thm]{Definition}
\newtheorem{rem}[thm]{Remark}
\newtheorem{ex}[thm]{Example}
\newtheorem{conjecture}[thm]{Conjecture}
\newtheorem{question}{Question}[section]
\newcommand{\Rep}{\mbox{Rep}}
\newcommand{\HS}{{\mathtt{HS}}}
\newcommand{\Tr}{\mbox{\emph{Tr}}}
\newcommand{\N}{{\mathbb{N}}}
\newcommand{\Q}{\mathbb{Q}}
\newcommand{\R}{\mathbb{R}}
\newcommand{\Z}{\mathbb{Z}}
\newcommand{\C}{\mathbb{C}}
\newcommand{\St}{{\mathbb S}^3}
\newcommand{\Sth}{\widehat{{\mathbb S}^3}}
\newcommand{\DG}{\mathcal{D}'(G)}
\newcommand{\TS}{\mathbb{T}^1\times\St}
\newcommand{\jp}[1]{{\left\langle{#1}\right\rangle}}
\newlength{\dhatheight}
\newcommand{\doublehat}[1]{%
	\settoheight{\dhatheight}{\ensuremath{\hat{#1}}}
	\addtolength{\dhatheight}{-0.15ex}
	\widehat{\vphantom{\rule{5pt}{\dhatheight}}%
		\smash{\widehat{#1}}}}
\begin{document}

%
%
%
%
%
%
%
%
%

\title[Vector fields on compact Lie groups]{Global hypoellipticity and global solvability \\ for vector fields on compact Lie groups}


\author[A. Kirilov]{Alexandre Kirilov}
\address{
	Universidade Federal do Paran\'{a}, 
	Departamento de Matem\'{a}tica,
	C.P.19096, CEP 81531-990, Curitiba, Brazil
}
\email{akirilov@ufpr.br}


\author[W. A. A. de Moraes]{Wagner A. A. de Moraes}
\address{
	Universidade Federal do Paran\'{a},
	Programa de P\'os-Gradua\c c\~ao de Matem\'{a}tica,\linebreak
	C.P.19096, CEP 81531-990, Curitiba, Brazil
}

\email{wagneramat@gmail.com}


\author[M. Ruzhansky]{Michael Ruzhansky}
\address{Ghent University, 
	Department of Mathematics: Analysis, Logic and Discrete Mathematics, 
	Ghent, Belgium 
	and 
	Queen Mary University of London, 
	School of Mathematical Sciences, 
	London, United Kingdom
}
\thanks{This study was financed in part by the Coordenação de Aperfeiçoamento de Pessoal de Nível Superior - Brasil (CAPES) - Finance Code 001. The last author was also supported by the FWO Odysseus grant, by the Leverhulme Grant RPG-2017-151, and by EPSRC Grant EP/R003025/1.}

\email{Michael.Ruzhansky@ugent.be}

\subjclass[2010]{Primary 35R03, 43A80; Secondary 35H10, 58D25}

\keywords{Compact Lie groups, Global hypoellipticity, Global solvability, Normal form, Vector fields, Low order perturbations}


\begin{abstract}
 We present necessary and sufficient conditions to have global hypoellipticity and global solvability for a class of vector fields defined on a product of compact Lie groups. In view of Greenfield's and Wallach's conjecture, about the non-existence of globally hypoelliptic vector fields on compact manifolds different from tori, we also investigate different notions of regularity weaker than global hypoellipticity and describe completely the global hypoellipticity and global solvability of zero-order perturbations of our vector fields. We also present a class of vector fields with variable coefficients whose operators can be reduced to a normal form, and we prove that the study of the global properties of such operators is equivalent to the study of the respective properties for their normal forms.
\end{abstract}
\raggedbottom
\maketitle
\tableofcontents
\raggedbottom
\section{Introduction}
In this note, we study the regularity of solutions and solvability of vector fields (and their perturbations by zero order terms) on a compact Lie group $G$. More precisely, if $\mathcal{D}'(G)$ stands for the space of distributions on $G$ and ${P:\mathcal{D}'(G) \rightarrow \mathcal{D}'(G)}$ is a first-order differential operator, we are interested in establishing conditions that ensure that $u$ is smooth whenever $Pu$ is smooth. We  also want to identify under what conditions it is possible to guarantee that $Pu=f \in \mathcal{D}'(G)$ has a solution, in the sense of distributions. These properties are known as global hypoellipticity and global solvability, and have been widely studied in recent years, especially on the $d-$dimensional torus $\mathbb{T}^d$. See, for example, the impressive list of authors who have published articles addressing these subjects: \cite{Ber99}, \cite{BerCorMal93}, \cite{BCP04}, \cite{CC00}, \cite {AGKM18}, \cite{Gon18}, \cite{GPY92}, \cite{GW72}, \cite{GW73a}, \cite{GW73b}, \cite{H01}, \cite{HP04}, \cite{HP06}, \cite{Hou79}, \cite{Hou82}, \cite{Pet11}, \cite{PetrZan08} and references therein.

Even in the case of $\mathbb{T}^d$, the investigation of these properties for vector fields is a challenging problem that still has open questions. Perhaps, the most famous and seemingly far-off question of a solution is the Greenfield's and Wallach's conjecture, which states the following: if a closed smooth orientable manifold admits a globally hypoelliptic vector field, then this manifold is $C^\infty-$diffeomorphic to a torus and this vector field is $C^\infty-$conjugated to a constant vector field whose coefficients satisfy a diophantine condition (see \cite {F08} and \cite {GW73a}).

Most of the studies that deal with the question of global hypoellipticity and global solvability in the torus make use of the Fourier analysis as the main tool to obtain results from conditions imposed on the symbol or on the coefficients of the operator. For example, in \cite{GW72}, S. Greenfield and N. Wallach use only the Fourier series in $\mathbb{T}^d$ to characterize the global hypoellipticity of a differential operator through its symbol. In this paper there appears for the first time the famous application: $L=\partial_x+a\partial_y,$ $a \in \mathbb{R}$ is globally hypoelliptic in $\mathbb{T}^2$ if, and only if, $a$ is an irrational non-Liouville number. Therefore a natural way of extending such studies to other smooth manifolds would be to consider manifolds where we have a Fourier analysis.

In this direction, based on ideas \cite{GW73b} and \cite{See65}, J. Delgado and the third author \cite{DR18b} introduced on compact smooth manifolds $M$ a notion of Fourier series for operators that commute with a fixed elliptic operator. Using these ideas, a study of global hypoellipticity for such operators was made in \cite{AGK18}, \cite{AK19}, and \cite{KM}. The obvious disadvantage of this technique is that for now, it works only for operators that commute with a fixed elliptic operator.

In the particular case where the compact manifold is a Lie group $G$, there is a natural way of introducing a Fourier analysis into $G$, see for example \cite{DR14}, \cite{DR16}, \cite{DR18}, \cite{FR16}, \cite{RT10}, \cite{RT13},  \cite{RTW14}, \cite{RW13}, \cite{Tay68}. In this paper we use the notation and results based on the book by M. Ruzhansky and V. Turunen \cite{livropseudo} to study the global hypoellipticity and global solvability of a class of vector fields defined on Lie groups.

In the development of this project, we find natural to begin by extending the results of \cite{GW72} to a product of Lie groups $G_1\times G_2$. More precisely, if $L=X_1 + a X_2$, where $X_j$ is a vector field on the Lie algebra $\mathfrak{g}_j$ and $a \in \C$, then we characterize completely the global hypoellipticity and global solvability of $L$, presenting necessary and sufficient conditions on the behavior of its symbol $\sigma_L$. In a subsection dedicated to examples, we recover the results obtained in \cite{GW72} to the torus and present an example on $\TS$. This case was called ``constant-coefficient vector field''. 

Given the probable validity of the Greenfield's and Wallach's conjecture, we introduce two different notions of global regularity weaker than the global hypoellipticity. The first one,  global hypoellipticity modulo kernel, was inspired by the paper \cite{Ara19} of G. Ara\'ujo; and the second, $\mathcal{W}$--global hypoellipticity, emerged naturally in the development of this study. In both cases, we also characterize the regularity of $L=X_1 + a X_2$ by analyzing the behavior of the symbol $\sigma_L$.  To complement the study of the case of constant coefficients vector fields, we consider perturbations of $L$ by low order terms, obtaining necessary and sufficient conditions to have global hypoellipticity and global solvability.

Finally, we introduce the class of variable coefficients vector fields of the form $ L = X_1 + a (x_1) X_2 $, where $a \in C^\infty(G_1)$ is a real-valued function. We show that the vector field of this class can be reduced to a normal form, so the study of the global properties of such operators is equivalent to the study of the respective properties of their normal forms.

The paper is organized as follows. In Section 2 we recall some classical results about Fourier analysis on compact Lie groups and fix the notation that will be used throughout the text. In Section 3 we give necessary and sufficient conditions for the global hypoellipticity and the global solvability of constant-coefficient vector fields defined on compact Lie groups and we present a class of examples. Because of the Greenfield-Wallach conjecture, in Section 4 we present two weaker notions of hypoellipticity. In Section 5 we study perturbations of vector fields by low order terms, both by constants and by functions. Finally, in Section 6 we characterize global properties of a class of perturbed vector fields with variable coefficients. We present the normal form and establish a connection between the global properties of perturbed vector fields with variable and constant coefficients.

\raggedbottom
	\section{Fourier analysis on compact Lie groups}
In this section we recall most of the notations and preliminary results necessary for the development of this study. A very careful presentation of these concepts and the demonstration of all the results presented here can be found in the references \cite{FR16}  and \cite{livropseudo}.

Let $G$ be a compact Lie group and let $\Rep(G)$  be the set of continuous irreducible unitary representations of $G$. Since $G$ is compact, every continuous irreducible unitary representation $\phi$ is finite dimensional and it can be viewed as a matrix-valued function $\phi: G \to \C^{d_\phi\times d_\phi}$, where $d_\phi = \dim \phi$. We say that $\phi \sim \psi$ if there exists an unitary matrix $A\in C^{d_\phi \times d_\phi}$ such that $A\phi(x) =\psi(x)A$, for all $x\in G$. We will denote by $\widehat{G}$ the quotient of $\Rep(G)$ by this equivalence relation.

For $f \in L^1(G)$ the group Fourier transform of $f$ at $\phi \in \Rep(G)$ is
\begin{equation*}
\widehat{f}(\phi)=\int_G f(x) \phi(x)^* \, dx,
\end{equation*}
where $dx$ is the normalized Haar measure on $G$.
By the Peter-Wyel theorem, we have that 
\begin{equation}\label{ortho}
\mathcal{B} := \left\{\sqrt{\dim \phi} \, \phi_{ij} \,; \ \phi=(\phi_{ij})_{i,j=1}^{d_\phi}, [\phi] \in \widehat{G} \right\},
\end{equation}
is an orthonormal basis for $L^2(G)$, where we pick only one matrix unitary representation in each class of equivalence, and we may write
\begin{equation*}
f(x)=\sum_{[\phi]\in \widehat{G}}d_\phi \emph{\Tr}(\phi(x)\widehat{f}(\phi)).
\end{equation*}
Moreover, the Plancherel formula holds:
\begin{equation}
\label{plancherel} \|f\|_{L^{2}(G)}=\left(\sum_{[\phi] \in \widehat{G}}  d_\phi \ 
\|\widehat{f}(\phi)\|_{\HS}^{2}\right)^{\tfrac{1}{2}}=:
\|\widehat{f}\|_{\ell^{2}(\widehat{G})},
\end{equation}
where 
\begin{equation*} \|\widehat{f}(\phi)\|_{\HS}^{2}=\emph{\Tr}(\widehat{f}(\phi)\widehat{f}(\phi)^{*})=\sum_{i,j=1}^{d_\phi}  \bigr|\widehat{f}(\phi)_{ij}\bigr|^2.
\end{equation*}
Let $\mathcal{L}_G$ be the Laplace-Beltrami operator of $G$. For each $[\phi] \in \widehat{G}$, its matrix elements are eigenfunctions of $\mathcal{L}_G$ correspondent to the same eigenvalue that we will denote by $-\nu_{[\phi]}$, where $\nu_{[\phi]} \geq 0$. Thus
$$
-\mathcal{L}_G \phi_{ij}(x) = \nu_{[\phi]}\phi_{ij}(x), \quad \textrm{for all } 1 \leq i,j \leq d_\phi,
$$
and we will denote by
$$
\jp \phi := \left(1+\nu_{[\phi]}\right)^{1/2}
$$
the eigenvalues of $(I-\mathcal{L}_G)^{1/2}.$ We have the following estimate for the dimension of $\phi$  (Proposition 10.3.19 of \cite{livropseudo}): there exists $C>0$ such that for all $[\xi] \in \widehat{G}$ it holds
\begin{equation*}
d_\phi \leq C \jp{\phi}^{\frac{\dim G}{2}}.
\end{equation*}

For $x\in G$, $X\in \mathfrak{g}$ and $f\in C^\infty(G)$, define 
$$
L_Xf(x):=\frac{d}{dt} f(x\exp(tX))\bigg|_{t=0}.
$$

The operator $L_X$ is left-invariant, that is, $\pi_L(y)L_X = L_X\pi_L(y)$, for all $y \in G$. When there is no possibility of ambiguous meaning, we will write only $Xf$ instead of $L_Xf$. 

Let $G$ be a compact Lie group of dimension $d$ and $\{X_i\}_{i=1}^d$ a basis of its Lie algebra. For a multi-index $\alpha =(\alpha_1,\cdots \alpha_d)\in \N_0^d$,  the left-invariant differential operator of order $|\alpha|$ is
\begin{equation}\label{derivative}
\partial^\alpha := Y_1 \cdots Y_{|\alpha|},
\end{equation}
with $Y_j \in \{X_i \}_{i=1}^d$, $1 \leq j \leq |\alpha|$ and $\sum\limits_{j:Y_j=X_k}1=\alpha_k$ for every $1\leq k \leq d$. It means that $\partial^\alpha$ is a composition of left-invariant derivatives with respect to vector $X_1, \dots, X_d$ such that each $X_k$ enters $\partial^\alpha$ exactly $\alpha_k$ times. We do not specify in the notation $\partial^\alpha$ the order of vectors $X_1,\dots,X_d$, but this will not be relevant for the arguments that we will use in this article. 

We endow $C^\infty(G)$ with the usual Fréchet space topology defined by the family of seminorms $p_\alpha(f) = \max\limits_{x \in G} |\partial^\alpha f(x)|$, $\alpha \in \N$. Thus, the convergence on $C^\infty(G)$ is just the uniform convergence of functions and all their derivatives. As usual the space of distributions $\mathcal{D}'(G)$ is the space of all continuous linear functionals on $C^\infty(G)$.	
For $u \in \DG$, we define the distribution $\partial^\alpha u$ as
$$
\jp{\partial^\alpha u, \psi} := (-1)^{|\alpha|} \jp{u,\partial^\alpha \psi},
$$
for all $\psi \in C^\infty(G)$, and $\alpha \in \N_0^d$.

	Let  $P: C^{\infty}(G) \to C^{\infty}(G)$ be a continuous linear operator. The  symbol of the operator $P$ in $x\in G$ and $\phi \in \mbox{{Rep}}(G)$, $\phi=(\phi_{ij})_{i,j=1}^{d_\phi}$ is
$$
\sigma_P(x,\phi) := \phi(x)^*(P\phi)(x) \in \C^{d_\phi \times d_\phi},
$$
where $(P\phi)(x)_{ij}:= (P\phi_{ij})(x)$, for all $1\leq i,j \leq d_\phi$, and we have
$$
Pf(x) = \sum_{[\phi] \in \widehat{G}} \dim (\phi) \mbox{Tr} \left(\phi(x)\sigma_P(x,\phi)\widehat{f}(\phi)\right)
$$
for every $f \in C^\infty(G)$ and $x\in G$.

Notice that the last expression is independent of the choice of the representative. When $P: C^\infty(G) \to C^\infty(G)$ is a continuous linear left-invariant operator, that is $P\pi_L(y)=\pi_L(y)P$, for all $y\in G$, we have that $\sigma_P$ is independent of $x\in G$ and
$$
\widehat{Pf}(\phi) = \sigma_P(\phi)\widehat{f}(\phi),
$$
for all $f \in C^\infty(G)$ and $[\phi] \in \widehat{G}$.

Let $X\in \mathfrak{g}$ be a vector field normalized by the norm induced by the Killing form. It is easy to see that the operator $iX$ is symmetric on $L^2(G)$. Hence, for all $[\phi] \in \widehat{G}$ we can choose a representative $\phi$ such that $\sigma_{iX}(\phi)$ is a diagonal matrix, with entries $\lambda_m \in \R$, $1 \leq m \leq d_\phi$. By the linearity of the symbol, we obtain
$$
\sigma_X(\phi)_{mn} = i\lambda_m \delta_{mn}, \quad \lambda_j \in \R.
$$
Notice that $\{\lambda_m\}_{m=1}^{d_\phi}$ are the eigenvalues of $\sigma_{iX}(\phi)$ and then are independent of the choice of the representative, since the symbol of equivalent representations are similar matrices. Moreover, since $-(\mathcal{L}_G - X^2)$ is a positive operator and commutes with $X^2$,
$$
|\lambda_m(\phi)| \leq \jp{\phi},
$$
for all $[\phi] \in \widehat{G}$ and $1 \leq m \leq d_\phi$.

Smooth functions and distributions can be characterized by their Fourier coefficients:

\begin{prop}[Page 759 of \cite{DR14}] \label{smoo}
	Let $G$ be a compact Lie group. The following three statements are equivalent:
	\begin{enumerate}[1.] 
		\item $f \in C^\infty(G)$;
		\item for each $N>0$, there exists $C_N > 0$ such that
		$$
		\|\widehat{f}(\phi)\|_{\HS} \leq C_N \jp{\phi}^{-N},
		$$
		 for all $[\phi] \in \widehat{G}$;
		\item for each $N>0$, there exists $C_N > 0$ such that
		$$
		|\widehat{f}(\phi)_{ij}| \leq C_N \jp{\phi}^{-N}, 
		$$
for all $[\phi] \in \widehat{G}$ and $1\leq i,j \leq d_\phi$.
	\end{enumerate}
Moreover, the following three statements are equivalent:
	\begin{enumerate}[1.]\setcounter{enumi}{3}
		\item $u \in \DG$;
		\item there exist $C, \, N > 0$ such that 
		$$
		\|\widehat{u}(\phi)\|_{\HS} \leq C \jp{\phi}^{N}, 
		$$
for all $[\phi] \in \widehat{G}$;
		\item there exist $C, \, N > 0$ such that 
		$$
		|\widehat{u}(\phi)_{ij}| \leq C \jp{\phi}^{N},
		$$
		for all $[\phi] \in \widehat{G}$ and $1\leq i,j \leq d_\phi$.
	\end{enumerate}
\end{prop}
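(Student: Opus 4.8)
The statement to prove is Proposition~\ref{smoo}, the characterization of $C^\infty(G)$ and $\mathcal{D}'(G)$ via decay/growth of Fourier coefficients. The plan is to pass freely between the Hilbert–Schmidt norm $\|\widehat{f}(\phi)\|_{\HS}$ and the individual entries $|\widehat{f}(\phi)_{ij}|$ using
$$
|\widehat{f}(\phi)_{ij}| \leq \|\widehat{f}(\phi)\|_{\HS} = \Bigl(\sum_{i,j=1}^{d_\phi}|\widehat{f}(\phi)_{ij}|^2\Bigr)^{1/2} \leq d_\phi \max_{i,j}|\widehat{f}(\phi)_{ij}|,
$$
together with the polynomial bound $d_\phi \leq C\jp{\phi}^{\dim G/2}$ recorded above; this bound lets one absorb the factor $d_\phi$ into the power $\jp{\phi}^{\pm N}$ at the cost of shifting $N$, so the equivalences $(2)\Leftrightarrow(3)$ and $(5)\Leftrightarrow(6)$ are immediate.

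For $(1)\Rightarrow(2)$, I would use that for the bi-invariant operator $I-\mathcal{L}_G$ one has $\widehat{((I-\mathcal{L}_G)^k f)}(\phi) = \jp{\phi}^{2k}\widehat{f}(\phi)$, since the matrix coefficients $\phi_{ij}$ are eigenfunctions of $\mathcal{L}_G$ with eigenvalue $-\nu_{[\phi]}$ and $\jp{\phi}^2 = 1+\nu_{[\phi]}$. Hence
$$
\jp{\phi}^{2k}\|\widehat{f}(\phi)\|_{\HS} = \|\widehat{(I-\mathcal{L}_G)^k f}(\phi)\|_{\HS} \leq \|(I-\mathcal{L}_G)^k f\|_{L^2(G)} < \infty,
$$
where the middle inequality is just the fact that a single term in the Plancherel sum \eqref{plancherel} is bounded by the full $\ell^2(\widehat{G})$-norm. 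Since $f\in C^\infty(G)$ implies $(I-\mathcal{L}_G)^kf \in C^\infty(G) \subset L^2(G)$ for every $k$, choosing $k$ with $2k\geq N$ gives the desired estimate with $C_N = \|(I-\mathcal{L}_G)^{\lceil N/2\rceil}f\|_{L^2(G)}$.

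For the converse $(2)\Rightarrow(1)$, I would show the Fourier series $\sum_{[\phi]}d_\phi\Tr(\phi(x)\widehat{f}(\phi))$ and all its derivatives converge uniformly. Applying a left-invariant operator $\partial^\alpha$ termwise produces $\sum_{[\phi]}d_\phi\Tr((\partial^\alpha\phi)(x)\widehat{f}(\phi))$; one estimates $|\Tr((\partial^\alpha\phi)(x)\widehat{f}(\phi))| \leq d_\phi \|(\partial^\alpha\phi)(x)\|_{\HS}\|\widehat{f}(\phi)\|_{\HS}$, bounds $\|(\partial^\alpha\phi)(x)\|_{\HS}$ by a power of $\jp{\phi}$ (each $X_i$ contributes a factor controlled by $\jp{\phi}$, as in the estimate $|\lambda_m(\phi)|\leq\jp{\phi}$ for first-order vector fields, iterated $|\alpha|$ times), and then uses the hypothesis \eqref{smoo}(2) with $N$ large together with $d_\phi\leq C\jp{\phi}^{\dim G/2}$ to dominate the series by a convergent one $\sum_{[\phi]}\jp{\phi}^{-2}$ — here one needs that $\sum_{[\phi]\in\widehat{G}}\jp{\phi}^{-s}<\infty$ for $s$ large, which follows from Weyl's asymptotics (equivalently, from $(I-\mathcal{L}_G)^{-s/2}$ being trace class for large $s$). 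Uniform convergence of all derivatives then places the sum in $C^\infty(G)$, and it equals $f$ by completeness of $\mathcal{B}$. The distributional equivalences $(4)\Leftrightarrow(5)\Leftrightarrow(6)$ follow by duality: $u\in\mathcal{D}'(G)$ is continuous on the Fréchet space $C^\infty(G)$, hence bounded by finitely many seminorms $p_\alpha$, which translates into a polynomial bound on $|\jp{u,\overline{\phi_{ij}}}| = |\widehat{u}(\phi)_{ij}|$; conversely a polynomial bound on the coefficients makes the pairing $\jp{u,\psi} = \sum_{[\phi]}d_\phi\Tr(\widehat{u}(\phi)\widehat{\psi}(\phi)^*)$ (suitably interpreted) converge for every $\psi\in C^\infty(G)$ by the already-proved rapid decay of $\widehat\psi$, with continuity in $\psi$.

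The main obstacle is the convergence input $\sum_{[\phi]\in\widehat{G}}\jp{\phi}^{-s}<\infty$ for $s$ sufficiently large and the accompanying derivative bounds $\|(\partial^\alpha\phi)(x)\|_{\HS}\lesssim\jp{\phi}^{c|\alpha|}$: everything else is bookkeeping with the two elementary matrix-norm inequalities above. Both of these are standard facts about compact Lie groups (they appear in \cite{livropseudo}), so in the write-up I would cite them rather than reprove them, and indeed since this proposition is quoted from \cite{DR14} the cleanest route is simply to refer to that source, sketching the argument only as far as is useful for the reader.
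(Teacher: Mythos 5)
Your sketch is correct and is the standard argument for this characterization: the Sobolev-type estimate via powers of $I-\mathcal{L}_G$ together with Plancherel for $(1)\Rightarrow(2)$, summability of $\jp{\phi}^{-s}$ for large $s$ plus the dimension bound $d_\phi\leq C\jp{\phi}^{\dim G/2}$ for the converse, the elementary entrywise/Hilbert--Schmidt comparison for $(2)\Leftrightarrow(3)$ and $(5)\Leftrightarrow(6)$, and duality for the distributional part. Note only that the paper itself offers no proof of Proposition~\ref{smoo} — it is quoted from \cite{DR14} (with details in \cite{livropseudo}) — so the comparison is with the cited source, whose argument your outline reproduces; citing it, as you propose at the end, is exactly what the authors do.
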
 

	Let $G_1$ and $G_2$ be compact Lie groups and set $G=G_1\times G_2$. Given $f \in L^1(G)$ and  $\xi \in {\Rep}(G_1)$, the partial Fourier coefficient of $f$ with respect to the first variable  is defined by 
$$
\widehat{f}(\xi, x_2) = \int_{G_1} f(x_1,x_2)\, \xi(x_1)^* \, dx_1 \in \C^{d_\xi \times d_\xi}, \quad x_2 \in G_2,
$$
with components
$$
\widehat{f}(\xi, x_2)_{mn} = \int_{G_1} f(x_1,x_2)\, \overline{\xi(x_1)_{nm}} \, dx_1, \quad 1 \leq m,n\leq d_\xi.
$$
Analogously we define the partial Fourier coefficient of $f$ with respect to the second variable. Notice that, by definition, $\widehat{f}(\xi,\: \cdot \:)_{mn} \in C^\infty(G_2)$ and $\widehat{f}(\: \cdot \:, \eta)_{rs} \in C^\infty(G_1)$. 

Let $u \in \DG$, $\xi \in {\Rep}(G_1)$ and $1\leq m,n \leq d_\xi$. The $mn$-component of  the partial Fourier coefficient of $u$ with respect to the first variable is the linear functional defined by
$$
\begin{array}{rccl}
\widehat{u}(\xi, \: \cdot\: )_{mn}: & C^\infty(G_2) & \longrightarrow & \C \\
& \psi & \longmapsto & \jp{\widehat{u}(\xi, \: \cdot \:)_{mn},\psi} := \jp{u,\overline{\xi_{nm}}\times\psi}_G.
\end{array}
$$
In a similar way, for $\eta \in {\Rep}(G_2)$ and $1 \leq r,s\leq d_\eta$, we define the $rs$-component of the partial Fourier coefficient of $u$ with respect to the second variable.
It is easy to see that $ \widehat{u}(\xi, \: \cdot\: )_{mn} \in \mathcal{D}'(G_2)$ and $\widehat{u}( \: \cdot\:,\eta )_{rs} \in \mathcal{D}'(G_1)$. 

Notice that
\begin{equation*}
\doublehat{\,u\,}\!(\xi,\eta)_{mn_{rs}} = \doublehat{\,u\,}\!(\xi,\eta)_{rs_{mn}} =  \widehat{u}(\xi \otimes \eta)_{ij},
\end{equation*}
with $i = d_\eta(m-1)+ r$ and $j  =  d_\eta(n-1) + s$, whenever $u \in C^\infty(G)$ or $u \in \DG$.

Finally, we have the following characterization of smooth functions and distributions on $G=G_1\times G_2$:

\begin{prop}[Theorems 3.3 and 3.4 of \cite{KMR19}]\label{smoopartial}
	Let $G_1$ and $G_2$ be compact Lie groups, and set $G=G_1\times G_2$ . The following three statements are equivalent:
	\begin{enumerate}[1.]
		\item $f \in C^\infty(G)$;
		\item for each $N>0$, there exists $C_N>0$ such that
		$$
		\|\doublehat{\,f\,}\!(\xi,\eta)\|_{\HS} \leq C_N(\jp{\xi}+\jp{\eta})^{-N},
		$$
		for all $[\xi] \in \widehat{G_1}$ and $[\eta] \in \widehat{G_2}$;
		\item for each $N>0$, there exists $C_N>0$ such that
		$$
		\Big|\doublehat{\,f\,}\!(\xi,\eta)_{mn_{rs}}  \Big| \leq C_N (\jp{\xi}+\jp{\eta})^{-N},
		$$
		for all $[\xi] \in \widehat{G_1}$, $[\eta] \in \widehat{G_2}$, $1 \leq m,n \leq d_\xi$,  and $1 \leq r,s \leq d_\eta$.
	\end{enumerate}

Moreover, the following statements are equivalent:
	\begin{enumerate}[1.]\setcounter{enumi}{3}
		\item  $u \in \DG$;
		\item there exists $C, \, N >0$ such that
		$$
		\|\doublehat{\,u\,}\!(\xi,\eta)\|_{\HS} \leq C(\jp{\xi}+\jp{\eta})^{N},
		$$
		for all $[\xi] \in \widehat{G_1}$ and $[\eta] \in \widehat{G_2}$;
		\item there exists $C, \, N >0$ such that
		$$
		\Big|\doublehat{\,u\,}\!(\xi,\eta)_{mn_{rs}}  \Big| \leq C (\jp{\xi}+\jp{\eta})^{N},
		$$
		for all $[\xi] \in \widehat{G_1}$, $[\eta] \in \widehat{G_2}$, $1 \leq m,n \leq d_\xi$, and $1 \leq r,s \leq d_\eta.$
	\end{enumerate}
\end{prop}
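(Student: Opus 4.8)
The plan is to reduce the statement to the single-group characterization of Proposition \ref{smoo} by recognizing the partial Fourier coefficients on $G=G_1\times G_2$ as the ordinary group-Fourier coefficients on $G$ evaluated at outer tensor-product representations. Recall that every $[\phi]\in\widehat{G}$ has the form $[\xi\otimes\eta]$ with $[\xi]\in\widehat{G_1}$ and $[\eta]\in\widehat{G_2}$, where $(\xi\otimes\eta)(x_1,x_2)=\xi(x_1)\otimes\eta(x_2)$ and $d_{\xi\otimes\eta}=d_\xi d_\eta$; conversely every such outer product is irreducible, and these exhaust $\widehat{G}$ without repetition. Under the identification $\doublehat{\,u\,}\!(\xi,\eta)_{mn_{rs}}=\widehat{u}(\xi\otimes\eta)_{ij}$, $i=d_\eta(m-1)+r$, $j=d_\eta(n-1)+s$, recorded just above, the reindexing $(m,n,r,s)\mapsto(i,j)$ is a bijection onto $\{1,\dots,d_\xi d_\eta\}^2$, so in particular $\|\doublehat{\,u\,}\!(\xi,\eta)\|_{\HS}=\|\widehat{u}(\xi\otimes\eta)\|_{\HS}$, and similarly for $f\in C^\infty(G)$.

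The key step is the comparison of the weight $\jp{\xi\otimes\eta}$ with $\jp{\xi}+\jp{\eta}$. I would fix on $G$ the product of the Riemannian metrics chosen on $G_1$ and $G_2$, so that $\mathcal{L}_G=\mathcal{L}_{G_1}\oplus\mathcal{L}_{G_2}$. Since a matrix coefficient of $\xi\otimes\eta$ factors as $\xi_{mn}(x_1)\,\eta_{rs}(x_2)$, it is an eigenfunction of $-\mathcal{L}_G$ with eigenvalue $\nu_{[\xi]}+\nu_{[\eta]}$, whence $\jp{\xi\otimes\eta}^2=1+\nu_{[\xi]}+\nu_{[\eta]}$. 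Combining this with $\jp{\xi}^2+\jp{\eta}^2=1+\jp{\xi\otimes\eta}^2$ and $\jp{\xi},\jp{\eta}\geq 1$, an elementary estimate yields
$$
\tfrac12\left(\jp{\xi}+\jp{\eta}\right)\leq\jp{\xi\otimes\eta}\leq\jp{\xi}+\jp{\eta},
$$
so that $\left(\jp{\xi}+\jp{\eta}\right)^{\pm N}$ and $\jp{\xi\otimes\eta}^{\pm N}$ coincide up to constants depending only on $N$; any other smooth metric on $G$ is equally good, as it changes $\jp{\cdot}$ only up to comparability.

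Granting this, the rest is bookkeeping. If $f\in C^\infty(G)$, Proposition \ref{smoo} gives $\|\widehat{f}(\xi\otimes\eta)\|_{\HS}\leq C_N\jp{\xi\otimes\eta}^{-N}$ for every $N$, and the displayed comparison turns this into $\|\doublehat{\,f\,}\!(\xi,\eta)\|_{\HS}\leq C_N'\left(\jp{\xi}+\jp{\eta}\right)^{-N}$; the converse is obtained by reading the same inequalities in the opposite direction and invoking Proposition \ref{smoo} again. The equivalence between the $\HS$-norm bound and the entrywise bound is exactly as in Proposition \ref{smoo}: one direction is immediate since each entry is dominated by the $\HS$ norm, and for the other one uses $\|\doublehat{\,f\,}\!(\xi,\eta)\|_{\HS}^2\leq (d_\xi d_\eta)^2\max_{m,n,r,s}\big|\doublehat{\,f\,}\!(\xi,\eta)_{mn_{rs}}\big|^2$ and absorbs $d_\xi d_\eta=d_{\xi\otimes\eta}\leq C\jp{\xi\otimes\eta}^{(\dim G)/2}\leq C'\left(\jp{\xi}+\jp{\eta}\right)^{(\dim G)/2}$ into the free power. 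The distributional statements are proved identically, using parts (4)--(6) of Proposition \ref{smoo} and replacing rapid decay by an upper polynomial bound. I do not expect a genuine obstacle: the whole content is the identification $\widehat{G_1\times G_2}\cong\widehat{G_1}\times\widehat{G_2}$ via outer tensor products together with the additivity $\nu_{[\xi\otimes\eta]}=\nu_{[\xi]}+\nu_{[\eta]}$ of the Laplace eigenvalues, which reduces the two-group statement to the one-group one; the only care needed is to check that the reindexing of coefficients is a genuine bijection and that the dimensional factors are polynomially controlled by the weight.
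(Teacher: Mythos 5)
Your argument is correct. Note first that the paper does not actually prove this proposition: it is imported verbatim as Theorems 3.3 and 3.4 of \cite{KMR19}, where the statement is established by working directly with the partial Fourier coefficients in each variable separately; so your reduction to the single-group characterization of Proposition \ref{smoo} is a genuinely different (and arguably cleaner) route. The three ingredients you need all check out: (i) for compact groups every element of $\widehat{G_1\times G_2}$ is an outer tensor product $[\xi\otimes\eta]$, and the reindexing $(m,n,r,s)\mapsto(i,j)$ already recorded in the paper is a bijection onto $\{1,\dots,d_\xi d_\eta\}^2$, giving $\|\doublehat{\,u\,}\!(\xi,\eta)\|_{\HS}=\|\widehat{u}(\xi\otimes\eta)\|_{\HS}$; (ii) with the product bi-invariant metric one has $\mathcal{L}_G=\mathcal{L}_{G_1}\oplus\mathcal{L}_{G_2}$, hence $\jp{\xi\otimes\eta}^2=\jp{\xi}^2+\jp{\eta}^2-1$, and your two-sided bound $\tfrac12(\jp{\xi}+\jp{\eta})\leq\jp{\xi\otimes\eta}\leq\jp{\xi}+\jp{\eta}$ follows from $\jp{\xi},\jp{\eta}\geq1$; (iii) the passage between Hilbert--Schmidt and entrywise bounds costs only the factor $d_\xi d_\eta=d_{\xi\otimes\eta}\leq C\jp{\xi\otimes\eta}^{(\dim G)/2}$, which is absorbed into the free exponent exactly as in Proposition \ref{smoo}. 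The only point deserving an explicit word is your parenthetical that ``any other smooth metric is equally good'': what is really used is that any two choices of (bi-invariant) Laplacian on $G$ produce comparable weights $\jp{\cdot}$, a standard consequence of both operators generating the same Sobolev scale; stating that one fixes the product metric once and for all would make the argument airtight. What your approach buys is a two-line deduction from the already-stated one-group result; what the direct approach of \cite{KMR19} buys is independence from the classification of $\widehat{G_1\times G_2}$ and a formulation that extends to settings where one only controls one factor at a time.
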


\section{Constant coefficient vector fields}

Let $G_1$ and $G_2$ be compact Lie groups, $G:=G_1\times G_2$, and consider the linear operator $L:C^\infty(G)\to C^\infty(G)$  defined by
\begin{equation*}
L:=X_1+a X_2,
\end{equation*}
where $X_1 \in \mathfrak{g}_1$, $X_2 \in \mathfrak{g}_2$ and $a \in \C$. Thus, for each $u\in C^\infty(G)$ we have
\begin{align}
Lu(x_1,x_2) &:= X_1u(x_1,x_2) + a X_2u(x_1,x_2) \nonumber \\
& :=  \frac{d}{dt} u(x_1\exp(tX_1), x_2) \bigg|_{t=0} + a \frac{d}{ds} u(x_1, x_2\exp(sX_2))\bigg|_{s=0}. \nonumber
\end{align}

The operator $L$ extends to distributions in a natural way, that is, if $u \in \DG$, then
$$
\jp{Lu,\varphi}_G := -\jp{u,L\varphi}_G, \quad  \varphi \in C^\infty(G).
$$

In this section we present necessary and sufficient conditions for the vector field $L$ to be globally hypoelliptic and to be globally solvable. We also present examples recovering known results in the torus and presenting an example in $\TS$.

\subsection{Global hypoellipticity}

\begin{defi}
	Let $G$ be a compact Lie group. We say that an operator $P:\DG\to \DG$ is globally hypoelliptic if the conditions $u \in \DG$ and $Pu \in C^\infty(G)$ imply that $u \in C^\infty(G)$.
\end{defi}
\raggedbottom
Consider the equation
\begin{equation*}
Lu(x_1,x_2)=X_1u(x_1,x_2)+a X_2u(x_1,x_2)=f(x_1,x_2),
\end{equation*}
where $f \in C^{\infty}(G)$. For each $[\xi] \in \widehat{G_1}$, we can choose a representative $\xi \in \mbox{Rep}({G_1})$ such that
\begin{equation*}
\sigma_{X_1}(\xi)_{mn} =i\lambda_m(\xi) \delta_{mn}, \quad 1 \leq m,n \leq d_\xi,
\end{equation*}
where  $\lambda_m(\xi)\in\R$  for all $[\xi] \in\widehat{G_1}$ and $1 \leq m \leq d_\xi$.
Similarly, for each $[\eta] \in \widehat{G_2}$, we can choose a representative $\eta \in \mbox{Rep}({G_2})$ such that
\begin{equation*}
\sigma_{X_2}(\eta)_{rs} =i\mu_r(\eta) \delta_{rs}, \quad 1 \leq r,s \leq d_\eta,
\end{equation*}
where  $\mu_r(\eta)\in\R$  for all $[\eta] \in\widehat{G_2}$ and $1 \leq r \leq d_\eta$.

Suppose that $u \in C^\infty(G)$. Thus, taking the partial Fourier coefficient with respect to the first variable at $x_2 \in G_2$ we obtain
\begin{align*}
\widehat{f}(\xi,x_2)& = \widehat{Lu}(\xi,x_2) \\ &= \int_{G_1}Lu(x_1,x_2)\xi(x_1)^*\,dx_1 \nonumber \\ 
&= \int_{G_1} X_1u(x_1,x_2)\xi(x_1)^* \, dx_1 + a \int_{G_1}X_2u(x_1,x_2)\xi(x_1)^*\,dx_1 \nonumber \\ 
&= \widehat{X_1u}(\xi,x_2)+a X_2\int_{G_1}u(x_1,x_2)\xi(x_1)^{*} \, dx_1 \nonumber \\ 
&= \sigma_{X_1}(\xi)\widehat{u}(\xi,x_2)+a X_2 \widehat{u}(\xi,x_2). \nonumber 
\end{align*}
Hence, for each $x_2 \in G_2$, $\widehat{f}(\xi,x_2) \in \C^{d_\xi\times d_\xi}$  and 
\begin{equation*}
\widehat{f}(\xi,x_2)_{mn} = i\lambda_m(\xi)\widehat{u}(\xi,x_2)_{mn}+a X_2 \widehat{u}(\xi,x_2)_{mn}, \quad 1\leq m,n \leq d_\xi.
\end{equation*}

The details about partial Fourier series can be found in \cite{KMR19}. Now, taking the Fourier coefficient of $\widehat{f}(\xi, \cdot)_{mn}$ with respect to the second variable, we obtain
\begin{align}
\doublehat{\,f\,}\!(\xi,\eta)_{mn}&= \int_{G_2}\widehat{f}(\xi,x_2)_{mn}\eta(x_2)^*\,dx_2 \nonumber \\
&= \int_{G_2} (i\lambda_m(\xi)\widehat{u}(\xi,x_2)_{mn}+a X_2 \widehat{u}(\xi,x_2)_{mn})\eta(x_2)^* \,dx_2 \nonumber \\ 
&= i\lambda_m(\xi)\int_{G_2}\widehat{u}(\xi,x_2)_{mn}\eta(x_2)^* \,dx_2 + a\int_{G_2} X_2\widehat{u}(\xi,x_2)_{mn}\eta(x_2)^* \,dx_2 \nonumber \\ 
&= i\lambda_m(\xi)\doublehat{\, u \,}\!(\xi,\eta)_{mn}+ a \sigma_{X_2}(\eta)\doublehat{\, u \,}\!(\xi,\eta)_{mn}. \nonumber
\end{align}
Thus, $\doublehat{\,f\,}\!(\xi,\eta)_{mn} \in \C^{d_\eta\times d_\eta}$ and 
\begin{equation*}
\doublehat{\,f\,}\!(\xi,\eta)_{mn_{rs}}=i(\lambda_m(\xi)+a \mu_r(\eta))\doublehat{\, u\,}\!(\xi,\eta)_{mn_{rs}},\quad 1\leq r,s \leq d_\eta.
\end{equation*}

From this we can conclude that
\begin{equation}\label{image0}
\doublehat{\,f\,}\!(\xi,\eta)_{mn_{rs}}=0, \mbox{ whenever } \lambda_m(\xi)+a \mu_r(\eta) = 0.
\end{equation}
Moreover, if $\lambda_m(\xi)+a \mu_r(\eta) \neq 0$, then
\begin{equation}\label{fourieru}
\doublehat{\, u \,}\!(\xi,\eta)_{mn_{rs}} = \dfrac{1}{i(\lambda_m(\xi)+a\mu_{r}(\eta))} \doublehat{\,f\,}\!(\xi,\eta)_{mn_{rs}}.
\end{equation}

%

We begin by presenting the following necessary condition for global hypoellipticity of the vector field $L=X_1+a X_2$.

\begin{prop}\label{lemmainf}
	Suppose that the set
	\begin{equation}\label{setN}
		\mathcal{N}=\{([\xi], [\eta]) \in  \widehat{G_1} \times \widehat{G_2}; \ \lambda_m(\xi)+a \mu_r(\eta) = 0, \ \mbox{for some } 1 \leq m \leq d_\xi, 1 \leq r \leq d_\eta  \}
	\end{equation}

	has infinitely many elements. Then there exists $u \in \DG\setminus C^\infty(G)$ such that 
	$$
	Lu=0.
	$$
	In particular, $L$ is not globally hypoelliptic.
\end{prop}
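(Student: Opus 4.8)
The plan is to construct by hand a distribution $u\in\mathcal{D}'(G)\setminus C^\infty(G)$ with $Lu=0$; since then $Lu=0\in C^\infty(G)$, this shows at once that $L$ is not globally hypoelliptic. The point is that the Fourier multiplier $i(\lambda_m(\xi)+a\mu_r(\eta))$ of $L$, which appears in the computation preceding the statement, vanishes on the resonant set $\mathcal{N}$; so a distribution whose partial Fourier coefficients are supported on $\mathcal{N}$ at those vanishing slots is automatically annihilated by $L$, and by placing coefficients of size $1$ at infinitely many such slots we spoil smoothness without spoiling the distribution property. Concretely, as $\widehat{G_1}\times\widehat{G_2}$ is countable, enumerate $\mathcal{N}=\{([\xi_k],[\eta_k]):k\in\N\}$ as an infinite sequence of pairwise distinct pairs, and for each $k$ fix $1\le m_k\le d_{\xi_k}$ and $1\le r_k\le d_{\eta_k}$ with $\lambda_{m_k}(\xi_k)+a\mu_{r_k}(\eta_k)=0$. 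Define $u$ by $\doublehat{u}(\xi,\eta)=0$ whenever $([\xi],[\eta])\notin\mathcal{N}$, and $\doublehat{u}(\xi_k,\eta_k)_{mn_{rs}}=1$ if $(m,n,r,s)=(m_k,m_k,r_k,r_k)$ and $=0$ otherwise.

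First I would check that this prescription defines an element of $\mathcal{D}'(G)$: since $\|\doublehat{u}(\xi,\eta)\|_{\HS}\le 1$ for all $[\xi],[\eta]$, the growth bound of Proposition~\ref{smoopartial} holds with $C=1$ and $N=0$, so $u\in\mathcal{D}'(G)$ (equivalently, the associated Fourier series converges in $\mathcal{D}'(G)$, the pairing with any $\varphi\in C^\infty(G)$ being absolutely convergent because the coefficients of $\varphi$ decay rapidly). Next I would verify $Lu=0$. Since $L=X_1+aX_2$ is left-invariant, $\widehat{Lu}(\xi\otimes\eta)=\sigma_L(\xi\otimes\eta)\widehat{u}(\xi\otimes\eta)$, and in the chosen representatives $\sigma_L(\xi\otimes\eta)$ is diagonal with the entry in the row labelled by $(m,r)$ equal to $i(\lambda_m(\xi)+a\mu_r(\eta))$; thus the same computation that precedes the statement yields $\doublehat{Lu}(\xi,\eta)_{mn_{rs}}=i(\lambda_m(\xi)+a\mu_r(\eta))\doublehat{u}(\xi,\eta)_{mn_{rs}}$, now for $u\in\mathcal{D}'(G)$ (the passage to distributions being immediate from $\langle Lu,\varphi\rangle=-\langle u,L\varphi\rangle$ together with left-invariance). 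For $([\xi],[\eta])\notin\mathcal{N}$ the right-hand side is $0$, and for $([\xi_k],[\eta_k])$ the only nonzero coefficient of $u$ lies in the row $(m,r)=(m_k,r_k)$, on which the multiplier vanishes; hence $\doublehat{Lu}(\xi,\eta)=0$ for all $[\xi],[\eta]$, that is, $Lu=0$.

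Finally I would show $u\notin C^\infty(G)$. If $u$ were smooth, Proposition~\ref{smoopartial} with $N=1$ would give $C_1>0$ with $\|\doublehat{u}(\xi,\eta)\|_{\HS}\le C_1(\jp{\xi}+\jp{\eta})^{-1}$ for all $[\xi],[\eta]$; evaluating at $([\xi_k],[\eta_k])$ gives $1\le C_1(\jp{\xi_k}+\jp{\eta_k})^{-1}$, i.e. $\jp{\xi_k}+\jp{\eta_k}\le C_1$ for every $k$. But for each $R>0$ the set $\{([\xi],[\eta])\in\widehat{G_1}\times\widehat{G_2}:\jp{\xi}+\jp{\eta}\le R\}$ is finite (the eigenvalues of $I-\mathcal{L}_{G_j}$ have finite multiplicity and tend to $+\infty$), and the pairs $([\xi_k],[\eta_k])$ are distinct, so $\jp{\xi_k}+\jp{\eta_k}\to\infty$ --- a contradiction. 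Hence $u\in\mathcal{D}'(G)\setminus C^\infty(G)$ with $Lu=0\in C^\infty(G)$, and $L$ is not globally hypoelliptic. The only slightly delicate step is the extension of the symbol identity for $L$ from $C^\infty(G)$ to $\mathcal{D}'(G)$, which follows directly from the definitions; I do not expect a genuine obstacle here.
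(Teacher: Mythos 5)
Your construction is essentially the same as the paper's: you place coefficients equal to $1$ on infinitely many resonant slots of $\mathcal{N}$ (where the multiplier $i(\lambda_m(\xi)+a\mu_r(\eta))$ vanishes), check the polynomial bound to get $u\in\mathcal{D}'(G)$, observe $Lu=0$ since every nonzero coefficient sits where the multiplier is zero, and rule out smoothness because infinitely many unit coefficients are incompatible with rapid decay. The proof is correct; your justification that $\jp{\xi_k}+\jp{\eta_k}\to\infty$ is in fact slightly more explicit than the paper's, which simply asserts non-smoothness.
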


\begin{proof}
Consider the sequence
$$\doublehat{\, u \,}\!(\xi,\eta)_{mn_{rs}} = \left\{
\begin{array}{ll}
1, & \mbox{if } \lambda_m(\xi)+a \mu_r(\eta)=0, \\
0, & \mbox{otherwise }
\end{array}
\right.
$$
Notice that for any $[\xi] \in \widehat{G_1}$, $[\eta] \in \widehat{G_2}$, $1\leq m,n \leq d_\xi$ and $1 \leq r,s \leq d_\eta$ we have
\begin{equation*}
|\doublehat{\, u \,}\!(\xi,\eta)_{mn_{rs}}| \nonumber\leq  \jp{\xi}+\jp{\eta}. \nonumber
\end{equation*}
Thus by the characterization of distributions by Fourier coefficients (Theorem \ref{smoopartial}) we conclude that $u\in\DG$, where
$$
u = \sum_{[\xi] \in \widehat{G_1}} \sum_{[\eta] \in \widehat{G_2}} d_\xi d_\eta \sum_{m,n=1}^{d_\xi} \sum_{r,s=1}^{d_\eta} \doublehat{\, u \,}\!(\xi,\eta)_{mn_{rs}} \xi_{nm} \eta_{sr}.
$$
Since there exist infinitely many representations such that $\doublehat{\, u \,}\!(\xi,\eta)_{mn_{rs}}=1$, it follows that $u \notin C^{\infty}(G)$. 
Furthermore, we have
$$
\doublehat{Lu}(\xi,\eta)_{mn_{rs}}=i(\lambda_m(\xi)+a \mu_r(\eta))\doublehat{\, u \,}\!(\xi,\eta)_{mn_{rs}} = 0, 
$$
for all  $[\xi] \in \widehat{G_1}, \ [\eta] \in \widehat{G_2},  1 \leq m \leq d_\xi,  1 \leq r \leq d_\eta.$ Then, by Plancherel formula \eqref{plancherel}, we conclude that  $Lu=0$. 
\end{proof}

\begin{thm}\label{thm1}
	The operator $L=X_1+a X_2$ is globally hypoelliptic  if and only if the following conditions are satisfied: 
	\begin{enumerate}[1.]
		\item The set
		$$
		\mathcal{N}=\{([\xi], [\eta]) \in  \widehat{G_1} \times \widehat{G_2}; \ \lambda_m(\xi)+a \mu_r(\eta) = 0, \ \mbox{for some } 1 \leq m \leq d_\xi, 1 \leq r \leq d_\eta  \}
		$$
		is finite.
		\item $\exists C, \, M>0$ such that
		\begin{equation}\label{hypothesis}
		|\lambda_m(\xi)+a\mu_r(\eta)|\geq C (\langle \xi\rangle +\langle \eta \rangle )^{-M}, 
		\end{equation}
		for all  $[\xi] \in \widehat{G_1}, \ [\eta] \in \widehat{G_2}, \ 1 \leq m \leq d_\xi, \  1 \leq r \leq d_\eta,$ whenever $\lambda_m(\xi)+a\mu_r(\eta) \neq 0$.
	\end{enumerate}
\end{thm}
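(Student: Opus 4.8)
The plan is to prove both implications by analyzing the Fourier coefficient identities \eqref{image0} and \eqref{fourieru} together with the characterizations in Proposition~\ref{smoopartial}.

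\medskip

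\textbf{Necessity.} Suppose $L$ is globally hypoelliptic. That condition~(1) holds is precisely the contrapositive of Proposition~\ref{lemmainf}, so nothing new is needed there. For condition~(2), I would argue by contradiction: if \eqref{hypothesis} fails, then for every $j\in\N$ there exist $[\xi_j]\in\widehat{G_1}$, $[\eta_j]\in\widehat{G_2}$ and indices $m_j,r_j$ with $\lambda_{m_j}(\xi_j)+a\mu_{r_j}(\eta_j)\neq 0$ but
$$
0<|\lambda_{m_j}(\xi_j)+a\mu_{r_j}(\eta_j)|< (\jp{\xi_j}+\jp{\eta_j})^{-j}.
$$
Since $\mathcal{N}$ is finite and $|\lambda_m(\xi)|\leq\jp{\xi}$, $|\mu_r(\eta)|\leq\jp{\eta}$, only finitely many pairs $([\xi],[\eta])$ can give a value of $|\lambda_m(\xi)+a\mu_r(\eta)|$ below any fixed positive bound with $\jp{\xi}+\jp{\eta}$ bounded; hence one may pass to a subsequence along which $\jp{\xi_j}+\jp{\eta_j}\to\infty$ and all the pairs $([\xi_j],[\eta_j])$ are distinct. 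Define $u$ by setting $\doublehat{\,u\,}\!(\xi_j,\eta_j)_{m_jn_{r_js}}$ (for the fixed single index choice $n=m_j$, $s=r_j$, say) equal to $\big(i(\lambda_{m_j}(\xi_j)+a\mu_{r_j}(\eta_j))\big)^{-1}$ and all other coefficients zero. The growth bound $|\doublehat{\,u\,}\!(\xi_j,\eta_j)_{m_jn_{r_js}}|\leq (\jp{\xi_j}+\jp{\eta_j})^{j}$ shows $u\in\mathcal{D}'(G)$ by Proposition~\ref{smoopartial}, while $Lu$ has Fourier coefficients $i(\lambda_{m_j}(\xi_j)+a\mu_{r_j}(\eta_j))\cdot\big(i(\lambda_{m_j}(\xi_j)+a\mu_{r_j}(\eta_j))\big)^{-1}=1$ at finitely... no — at the chosen positions, which are the only nonzero ones, each equal to $1$; but there are infinitely many of them, so unless they decay this is not smooth. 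Indeed $Lu\notin C^\infty(G)$ since its coefficients do not decay, contradicting global hypoellipticity (one also checks $u\notin C^\infty(G)$, but it is $Lu$ that matters here — actually we need $Lu$ smooth and $u$ not; instead take $Lu$ to be the smooth function whose coefficients are supported on these positions with rapidly decaying values $c_j\to 0$ fast, and let $u$ have coefficients $c_j/\big(i(\lambda_{m_j}(\xi_j)+a\mu_{r_j}(\eta_j))\big)$; choosing $c_j=(\jp{\xi_j}+\jp{\eta_j})^{-j}$ makes $Lu$ smooth but, because of the lower bound failure, $|\doublehat{\,u\,}\!|\geq c_j\cdot(\jp{\xi_j}+\jp{\eta_j})^{j}=1$, so $u\notin C^\infty(G)$). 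This contradiction gives~(2).

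\medskip

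\textbf{Sufficiency.} Assume (1) and (2). Let $u\in\mathcal{D}'(G)$ with $Lu=:f\in C^\infty(G)$. By Proposition~\ref{smoopartial} there are $C_0,N_0>0$ with $|\doublehat{\,u\,}\!(\xi,\eta)_{mn_{rs}}|\leq C_0(\jp{\xi}+\jp{\eta})^{N_0}$ for all indices. For the finitely many pairs in $\mathcal{N}$ this crude bound is harmless. For all other pairs, combining \eqref{fourieru} with the lower bound \eqref{hypothesis} and the rapid decay of $f$ (Proposition~\ref{smoopartial}, applied with exponent $N+M$) gives, for every $N>0$,
$$
|\doublehat{\,u\,}\!(\xi,\eta)_{mn_{rs}}|=\frac{|\doublehat{\,f\,}\!(\xi,\eta)_{mn_{rs}}|}{|\lambda_m(\xi)+a\mu_r(\eta)|}\leq \frac{C_{N+M}(\jp{\xi}+\jp{\eta})^{-(N+M)}}{C(\jp{\xi}+\jp{\eta})^{-M}}=\frac{C_{N+M}}{C}(\jp{\xi}+\jp{\eta})^{-N}.
$$
Thus $\doublehat{\,u\,}$ decays faster than any polynomial, so $u\in C^\infty(G)$ by Proposition~\ref{smoopartial}. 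Hence $L$ is globally hypoelliptic.

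\medskip

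\textbf{Main obstacle.} The routine part is the sufficiency estimate. The delicate point is the necessity of~(2): one must build a distribution from a subsequence of "bad" pairs while ensuring the chosen positions are genuinely distinct representations and that the constructed $f=Lu$ is smooth — this requires care in selecting the subsequence so that $\jp{\xi_j}+\jp{\eta_j}\to\infty$, which is where finiteness of $\mathcal{N}$ and the bounds $|\lambda_m(\xi)|\leq\jp{\xi}$, $|\mu_r(\eta)|\leq\jp{\eta}$ enter. I would isolate this selection as the key step and present the rest as direct computation with the Fourier characterizations.
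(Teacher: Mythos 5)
Your sufficiency argument and your reduction of condition (1) to Proposition~\ref{lemmainf} are correct and coincide with the paper's proof. The gap is in the necessity of condition (2). In your final construction you set $\doublehat{\, u \,}\!(\xi_j,\eta_j)_{m_jn_{r_js}}=c_j\bigl(i(\lambda_{m_j}(\xi_j)+a\mu_{r_j}(\eta_j))\bigr)^{-1}$ with $c_j=(\jp{\xi_j}+\jp{\eta_j})^{-j}$, and you check that $Lu$ is smooth and that $|\doublehat{\, u \,}\!(\xi_j,\eta_j)_{m_jn_{r_js}}|\geq 1$, whence $u\notin C^\infty(G)$. But you never verify that $u\in\DG$, and this can genuinely fail: the failure of \eqref{hypothesis} only gives the upper bound $0<|\lambda_{m_j}(\xi_j)+a\mu_{r_j}(\eta_j)|<(\jp{\xi_j}+\jp{\eta_j})^{-j}$, with no lower bound, so the quantity you divide by may decay super-polynomially (e.g. like $e^{-(\jp{\xi_j}+\jp{\eta_j})}$, which is compatible with the hypothesis once $\jp{\xi_j}+\jp{\eta_j}$ grows fast enough in $j$). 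In that case $|\doublehat{\, u \,}\!(\xi_j,\eta_j)_{m_jn_{r_js}}|$ exceeds every fixed power of $\jp{\xi_j}+\jp{\eta_j}$ and Proposition~\ref{smoopartial} shows $u\notin\DG$; without $u\in\DG$ there is no contradiction with global hypoellipticity. (The intermediate claim that $|\doublehat{\, u \,}\!(\xi_j,\eta_j)_{m_jn_{r_js}}|\leq(\jp{\xi_j}+\jp{\eta_j})^{j}$ ``shows $u\in\mathcal{D}'(G)$'' is also doubly wrong: the inequality goes the other way, and a bound whose exponent grows with $j$ does not meet the fixed-$N$ requirement of Proposition~\ref{smoopartial}; but you discard that version anyway.)

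The repair is exactly the paper's choice of normalization: take $c_j=|\lambda_{m_j}(\xi_j)+a\mu_{r_j}(\eta_j)|$, i.e. set the relevant coefficients of $u$ equal to $1$ and all others to $0$. Then $u\in\DG$ trivially (bounded coefficients); $u\notin C^\infty(G)$ because your subsequence argument produces infinitely many distinct pairs, forcing $\jp{\xi_j}+\jp{\eta_j}\to\infty$ while the coefficients do not decay; and $\doublehat{Lu}(\xi_j,\eta_j)_{m_jn_{r_js}}=i(\lambda_{m_j}(\xi_j)+a\mu_{r_j}(\eta_j))$ has modulus at most $(\jp{\xi_j}+\jp{\eta_j})^{-j}$, so $Lu\in C^\infty(G)$ by the same splitting into $j>N$ and $j\leq N$ that you already use. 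With that one-line change the proof matches the paper's.
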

\begin{proof}
$(\impliedby)$ Suppose that $Lu=f \in C^{\infty}(G)$ for some $u\in \DG$. Let us prove that $u \in C^{\infty}(G)$. 
Since the set $\mathcal{N}$ is finite, there exists $C>0$ such that
\begin{equation*}
|\doublehat{\, u \,}\!(\xi,\eta)_{mn_{rs}}| \leq C,
\end{equation*}
\raggedbottom
for all $([\xi], [\eta]) \in \mathcal{N}, \  1 \leq m,n \leq d_\xi, \ 1 \leq r,s \leq d_\eta$.
Let $N \in \N$. Then, for $([\xi], [\eta]) \in \mathcal{N}$, we have
\begin{align}
|\doublehat{\, u \,}\!(\xi,\eta)_{mn_{rs}}| &\leq C (\jp{\xi}+\jp{\eta})^N(\jp{\xi}+\jp{\eta})^{-N} \nonumber \\
&\leq C'_N(\jp{\xi}+\jp{\eta})^{-N} \nonumber
\end{align}
where ${C}'_N = \displaystyle\max_{([\xi],[\eta])\in \mathcal{N}}\left\{C (\langle \xi \rangle + \langle \eta \rangle)^{N}\right\}$.
On the other hand, if $([\xi],[\eta]) \notin \mathcal{N}$, by \eqref{fourieru} and \eqref{hypothesis} we obtain
\begin{align}
|\doublehat{\, u \,}\!(\xi,\eta)_{mn_{rs}}|
&\stackrel{}{=}\dfrac{1}{|\lambda_m(\xi)+a\mu_{r}(\eta)|} |\doublehat{\,f\,}\!(\xi,\eta)_{mn_{rs}}|\nonumber \\ 
&\stackrel{}{\leq}  C^{-1} (\langle \xi \rangle +\langle\eta \rangle) ^{M}|\doublehat{\,f\,}\!(\xi,\eta)_{mn_{rs}}|\nonumber
\end{align}
Since $f \in C^{\infty}(G)$, there exists $C_{N+M}>0$ such that
\begin{equation*}
|\doublehat{\,f\,}\!(\xi,\eta)_{mn_{rs}}|\leq C_{N+M}(\langle \xi \rangle+ \langle \eta \rangle)^{-(N+M)}
\end{equation*} 
Thus,
\begin{align}
|\doublehat{\, u \,}\!(\xi,\eta)_{mn_{rs}}|&\leq C^{-1}C_{N+M} (\langle \xi \rangle +\langle\eta \rangle) ^{M}(\langle \xi \rangle + \langle \eta \rangle)^{-(N+M)}\nonumber \\ 
&= {C''}_{\!\!\!N} (\langle \xi \rangle + \langle \eta \rangle)^{-N},\nonumber
\end{align}
where ${C''}_{\!\!\!N} =C^{-1}C_{N+M}$. Hence, if $([\xi],[\eta]) \notin \mathcal{N}$ we conclude that
\begin{equation*}
|\doublehat{\, u \,}\!(\xi,\eta)_{mn_{rs}}|\leq {C''}_{\!\!\!N}  (\langle \xi \rangle +\langle \eta \rangle)^{-N}.
\end{equation*}
Setting $C_N := \max\{{C'}_N, {C''}_{\!\!\!N} \}$, we have
$$
|\doublehat{\, u \,}\!(\xi,\eta)_{mn_{rs}}| \leq C_N (\langle \xi \rangle +\langle \eta \rangle)^{-N},
$$
for all $[\xi] \in \widehat{G_1}$, $[\eta] \in \widehat{G_2}$. Therefore by Theorem \ref{smoopartial} we conclude that $u \in C^{\infty}(G)$.

$(\implies)$ Let us prove the result by contradiction. If the condition 1 were not satisfied, by Proposition \ref{lemmainf}, there would be $u \in \DG \backslash C^\infty(G)$ such that $Lu=0$, contradicting the hypothesis of global hypoellipticity of $L$. So, let us assume that Condition 2 is not satisfied, then for every $M \in \N$, we choose $[\xi_M] \in  \widehat{G_1}$ and $[\eta_M] \in \widehat{G_2}$ such that
\begin{equation}\label{condition}
0<|\lambda_m(\xi_M)+a\mu_r(\eta_M)|\leq (\langle \xi_M \rangle +\langle \eta_M \rangle) ^{-M},
\end{equation}
for some $1 \leq m \leq d_{\xi_M}$ and $1 \leq r \leq d_{\eta_M}.$

Let $\mathcal{A}=\{([\xi_j],[\eta_j])\}_{j \in \N}$. It is easy to see that $\mathcal{A}$ has infinitely many elements. Define 
$$\doublehat{\, u \,}\!(\xi,\eta)_{mn_{rs}} = \left\{
\begin{array}{ll}
1, & \mbox{if $([\xi],[\eta])=([\xi_j],[\eta_j])$ for some $j\in \N$ and \eqref{condition} is satisfied} , \\
0, & \mbox{otherwise.}
\end{array}
\right.
$$
In this way,  $u \in \DG \backslash C^{\infty}(G)$. Let us show that $Lu=f \in C^{\infty}(G)$.

If $([\xi],[\eta]) \notin \mathcal{A}$, then $|\doublehat{\,f\,}\!(\xi,\eta)_{mn_{rs}}|=0$. Moreover, for every $M \in \N$, we have
\begin{align}
|\doublehat{\,f\,}\!(\xi_M,\eta_M)_{mn_{rs}}| &=  |\lambda_m(\xi_M)+a \mu_r(\eta_M)||\doublehat{\, u \,}\!(\xi_M,\eta_M)_{mn_{rs}}|\nonumber \\
&\leq(\langle \xi_M \rangle +\langle \eta_M\rangle) ^{-M}\nonumber
\end{align}
for every element of $\mathcal{A}$.

Fix $N>0 $. If $M> N$, then
$$
|\doublehat{\,f\,}\!(\xi_M,\eta_M)_{mn_{rs}}| \leq (\langle \xi_M \rangle + \langle \eta_M \rangle)^{-M} \leq  (\langle \xi_M \rangle + \langle \eta_M \rangle)^{-N} .
$$
For $M\leq N$ we have
\begin{align}
\left|\doublehat{\,f\,}\!(\xi_M,\eta_M)_{mn_{rs}}\right| &= \left|\doublehat{\,f\,}\!(\xi_M,\eta_M)_{mn_{rs}}\right|(\langle \xi_M \rangle + \langle \eta_M \rangle)^{N}(\langle \xi_M \rangle + \langle \eta_M \rangle)^{-N}\nonumber \\ 
& \leq  C'_N(\langle \xi_M \rangle + \langle \eta_M \rangle)^{-N}. \nonumber
\end{align}
where $C'_N:=\! \max\limits_{{M\leq N}}\left\{ |\doublehat{\,f\,}\!(\xi_M,\eta_M)_{mn_{rs}}|(\langle \xi_M \rangle + \langle \eta_M \rangle)^{N}\right\}$. 
For $C_N=\max\{C'_N,1\}$ we obtain
$$
|\doublehat{\,f\,}\!(\xi,\eta)_{mn_{rs}}| \leq C_N(\langle \xi \rangle + \langle \eta \rangle)^{-N},
$$
for all  $[\xi] \in \widehat{G_1}, \ [\eta] \in \widehat{G_2}, \ 1\leq m,n \leq d_\xi, \ 1\leq r,s \leq d_\eta$. Therefore $f \in C^{\infty}(G)$,  which contradicts the assumption that $L$ is globally hypoelliptic.
\end{proof}

\raggedbottom

\subsection{Global solvability}

In the literature there are several notions of the solvability of an operator, mainly depending on the functional environment in which one is working and what one intends to study. So the first step here is to define precisely what we mean by the global solvability.

Given a function (or distribution) $f$ defined on $G$, assume that $u\in\DG$ is a solution of $Lu=f$. By taking the partial Fourier coefficient with respect to $x_1$ and $x_2$ separately, and following the same procedure of the last subsection, we obtain from \eqref{image0} that
$$
\lambda_m(\xi)+a \mu_r(\eta)=0 \Longrightarrow \doublehat{\,f\,}\!(\xi,\eta)_{mn_{rs}}=0.
$$

Therefore, let us consider the following set 
$$\mathcal{K}:=\{w \in \DG; \ \doublehat{{\, w\,  }}(\xi,\eta)_{mn_{rs}}=0, \textrm{ whenever } \lambda_m(\xi)+a\mu_r(\eta)=0 \}.$$
If $f \notin \mathcal{K}$, then there is no $u \in \DG$ such that $Lu=f$.
We call the elements of $\mathcal{K}$ of admissible functions (distributions) for the solvability of $L$.

\begin{defi}
	We say that the operator $L$ is globally solvable if $L(\DG)=\mathcal{K}$.
\end{defi}

\begin{thm}\label{solvability}
	The operator $L=X_1+a X_2$ is globally solvable if and only if there exist $C,\,M>0$ such that
	\begin{equation}
	|\lambda_m(\xi)+a\mu_r(\eta)|\geq C (\langle \xi\rangle + \langle \eta \rangle )^{-M}, \label{condition1}
	\end{equation}
	for all  $[\xi] \in \widehat{G_1}, \ [\eta] \in \widehat{G_2},\  1 \leq m \leq d_\xi,\  1 \leq r \leq d_\eta$ whenever $\lambda_m(\xi)+a\mu_r(\eta) \neq 0$.
\end{thm}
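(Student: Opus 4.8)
The plan is to mimic the structure of the proof of Theorem \ref{thm1}, but now keeping careful track of the "resonant" Fourier coefficients that are no longer forced to vanish. Recall that for $u \in \DG$ with $Lu = f$, the computation preceding \eqref{image0}--\eqref{fourieru} gives
\[
\doublehat{\,f\,}\!(\xi,\eta)_{mn_{rs}} = i(\lambda_m(\xi)+a\mu_r(\eta))\doublehat{\,u\,}\!(\xi,\eta)_{mn_{rs}},
\]
so on the resonant set $\mathcal{N}$ the coefficient of $f$ must vanish (this is exactly the content of $f \in \mathcal{K}$), while off $\mathcal{N}$ the coefficient of $u$ is determined by division.

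\textbf{Sufficiency.} Assume \eqref{condition1} holds and let $f \in \mathcal{K}$. I would define $u \in \DG$ by its partial Fourier coefficients: set $\doublehat{\,u\,}\!(\xi,\eta)_{mn_{rs}} := 0$ whenever $\lambda_m(\xi)+a\mu_r(\eta)=0$, and $\doublehat{\,u\,}\!(\xi,\eta)_{mn_{rs}} := \dfrac{1}{i(\lambda_m(\xi)+a\mu_r(\eta))}\doublehat{\,f\,}\!(\xi,\eta)_{mn_{rs}}$ otherwise. Using the distributional bound $|\doublehat{\,f\,}\!(\xi,\eta)_{mn_{rs}}| \leq C(\jp{\xi}+\jp{\eta})^{N}$ together with \eqref{condition1}, one gets $|\doublehat{\,u\,}\!(\xi,\eta)_{mn_{rs}}| \leq C C^{-1}(\jp{\xi}+\jp{\eta})^{N+M}$, so by Proposition \ref{smoopartial} this is the sequence of Fourier coefficients of a genuine distribution $u \in \DG$ (one should note the sequence is consistently defined across the two orders of partial transforms, as recorded in the displayed identity relating $\doublehat{\,u\,}\!(\xi,\eta)_{mn_{rs}}$ and $\doublehat{\,u\,}\!(\xi,\eta)_{rs_{mn}}$). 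By construction $\doublehat{Lu}(\xi,\eta)_{mn_{rs}} = i(\lambda_m(\xi)+a\mu_r(\eta))\doublehat{\,u\,}\!(\xi,\eta)_{mn_{rs}}$ equals $\doublehat{\,f\,}\!(\xi,\eta)_{mn_{rs}}$ off $\mathcal{N}$, and equals $0 = \doublehat{\,f\,}\!(\xi,\eta)_{mn_{rs}}$ on $\mathcal{N}$ since $f \in \mathcal{K}$; hence $Lu = f$ by Plancherel. This shows $\mathcal{K} \subseteq L(\DG)$, and the reverse inclusion was already observed before the definition of $\mathcal{K}$, so $L(\DG) = \mathcal{K}$.

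\textbf{Necessity.} I would argue by contraposition: suppose \eqref{condition1} fails, so for every $M \in \N$ there are $[\xi_M] \in \widehat{G_1}$, $[\eta_M] \in \widehat{G_2}$ and indices $m,r$ with $0 < |\lambda_m(\xi_M)+a\mu_r(\eta_M)| \leq (\jp{\xi_M}+\jp{\eta_M})^{-M}$. Passing to an infinite subfamily $\mathcal{A} = \{([\xi_j],[\eta_j])\}_{j\in\N}$ of distinct pairs, I would build $f$ with $\doublehat{\,f\,}\!(\xi_j,\eta_j)_{m_jn_{r_js}} := 1$ for the relevant single choice of $m_j, r_j$ at each $j$ (all other coefficients zero). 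As in the proof of Theorem \ref{thm1}, the sequence $\{1\}$ is slowly increasing, so $f \in \DG$; moreover $f \in \mathcal{K}$, because the only nonzero coefficients of $f$ sit at resonant-free pairs (where $\lambda+a\mu \neq 0$), and every coefficient at a resonant position is $0$. If $L$ were globally solvable there would be $u \in \DG$ with $Lu = f$, forcing $\doublehat{\,u\,}\!(\xi_j,\eta_j)_{m_jn_{r_js}} = \dfrac{1}{i(\lambda_{m_j}(\xi_j)+a\mu_{r_j}(\eta_j))}$, whose modulus is $\geq (\jp{\xi_j}+\jp{\eta_j})^{j}$, violating the distribution bound of Proposition \ref{smoopartial}. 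Hence no such $u$ exists, contradicting solvability.

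\textbf{Main obstacle.} The routine estimates are immediate; the point requiring genuine care is the bookkeeping around partial Fourier coefficients. One must check that prescribing $\doublehat{\,u\,}\!(\xi,\eta)_{mn_{rs}}$ actually defines a distribution on the product $G_1 \times G_2$ and that the two ways of iterating the partial transforms agree, i.e. that the operator $L$ acts on these coefficients exactly as the formal computation in Section 3 suggests even when $u$ is only a distribution. This is handled by invoking the characterization in Proposition \ref{smoopartial} and the compatibility identity for $\doublehat{\,u\,}\!(\xi,\eta)_{mn_{rs}}$ stated in Section 2; with those in hand, the argument is a direct transcription of the global hypoellipticity proof with $\mathcal{N}$ now permitted to be infinite but with $f$ constrained to $\mathcal{K}$.
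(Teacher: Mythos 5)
Your proposal is correct and follows essentially the same route as the paper: the same explicit construction of $u$ by dividing the Fourier coefficients of $f\in\mathcal{K}$ off the resonant set for sufficiency, and the same contrapositive construction of an admissible $f$ with coefficients equal to $1$ along a sequence of nearly-resonant pairs to violate the polynomial growth bound of Proposition \ref{smoopartial} for necessity. The extra care you flag about the consistency of the iterated partial Fourier transforms is handled in the paper by the references to \cite{KMR19} and is not an obstacle.
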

\begin{proof}
$(\impliedby)$ For each $f \in \mathcal{K}$ define
\begin{equation}\label{solution1}
\doublehat{\, u \,}\!(\xi,\eta)_{mn_{rs}} = \left\{
\begin{array}{ll}
0, & \mbox{if } \lambda_m(\xi)+ a \mu_r(\eta)=0, \\
-i({\lambda_m(\xi)+ a \mu_r(\eta)})^{-1} \doublehat{\,f\,}\!(\xi,\eta)_{mn_{rs}}, & \mbox{otherwise. }
\end{array}
\right.
\end{equation}
Let us show that $\{ \doublehat{\, u \,}\!(\xi,\eta)_{mn_{rs}} \}$ is the sequence of Fourier coefficient of an element $u \in \DG$. Since $f \in \DG$, there exists $N \in \N$ and $C>0$ such that
$$
|\doublehat{\,f\,}\!(\xi,\eta)_{mn_{rs}}| \leq C(\langle \xi \rangle + \langle \eta \rangle)^{N},
$$
for all  $[\xi] \in \widehat{G_1}, \ [\eta] \in \widehat{G_2},\  1 \leq m \leq d_\xi,\  1 \leq r \leq d_\eta$. So
\begin{align}
|\doublehat{\, u \,}\!(\xi,\eta)_{mn_{rs}}|&= |{\lambda_m(\xi)+ a \mu_r(\eta)}|^{-1} |\doublehat{\,f\,}\!(\xi,\eta)_{mn_{rs}}|\label{construct}\\
&\leq C(\langle \xi \rangle +\langle \eta \rangle)^{M} |\doublehat{\,f\,}\!(\xi,\eta)_{mn_{rs}}| \nonumber \\
&\leq C(\jp{\xi} + \jp{\eta})^{N+M} \nonumber 
\end{align}
Therefore $u \in \DG$ and $Lu=f$.

$(\implies)$ Let us proceed by contradiction by constructing an element $f \in \mathcal{K}$ such that there is no $u \in \DG$ satisfying $Lu=f$.

If \eqref{condition1} is not satisfied, for each $M \in \N$, there exists $[\xi_M] \in \widehat{G_1}$ and $[\eta_M] \in \widehat{G_2}$ such that
\begin{equation}\label{condition3}
0 < |\lambda_{\tilde{m}}(\xi_M)+ a \mu_{\tilde{r}}(\eta_M)| < (\langle \xi_M \rangle + \langle \eta_M \rangle)^{-M},
\end{equation}
for some $1 \leq \tilde{m} \leq d_{\xi_M}$ and $1 \leq \tilde{r} \leq d_{\eta_M}$. We can suppose that $\jp{\xi_M}+\jp{\eta_M} \leq \jp{\xi_N}+\jp{\eta_N}$ when $M \leq N$.
Let $\mathcal{A}=\{([\xi_j],[\eta_j])\}_{j \in \N}$. Consider $f\in \mathcal{K}$ defined by
$$
\doublehat{\,f\,}\!(\xi,\eta)_{mn_{rs}} = \left\{
\begin{array}{ll}
1, &  \mbox{if $([\xi],[\eta])=([\xi_j],[\eta_j])$ for some $j\in \N$ and \eqref{condition3} is satisfied}, \\
0, & \mbox{otherwise. }
\end{array}
\right.
$$

Suppose that there exits $u \in \DG$ such that $Lu=f$. In this way, its Fourier coefficients must satisfy
$$
i(\lambda_m(\xi)+ a \mu_r(\eta))\doublehat{\, u \,}\!(\xi,\eta)_{mn_{rs}}=\doublehat{\,f\,}\!(\xi,\eta)_{mn_{rs}}.
$$
So
\begin{align}
|\doublehat{\, u \,}\!(\xi_M,\eta_M)_{\tilde{m}1_{\tilde{r}1}}| &= |{\lambda_{\tilde{m}}(\xi_M)+ a \mu_{\tilde{r}}(\eta_M)}|^{-1}| |\doublehat{\,f\,}\!(\xi_M,\eta_M)_{\tilde{m}1_{\tilde{r}1}}| \nonumber\\
&>  (\langle \xi_M \rangle + \langle \eta_M \rangle)^{M},\nonumber
\end{align}
where $\tilde{m}$ and $\tilde{r}$ are coefficients that satisfy \eqref{condition3}. Thus
$$
\|\doublehat{\, u \,}\!(\xi_M,\eta_M)\|_\HS > (\langle \xi_M \rangle + \langle \eta_M \rangle)^{M},
$$
for all $M>0$, which contradicts the fact that $ u \in \DG$. Therefore there does not exist $u \in \DG$ such that $Lu=f$.
\end{proof}

Notice that the estimate for the global solvability in the statement of the last theorem is exactly the same as one of the conditions to obtain global hypoellipticity announced in \eqref{hypothesis}, thus we have the following corollary.

\begin{cor}
	If $L$ is globally hypoelliptic, then $L$ is globally solvable.
\end{cor}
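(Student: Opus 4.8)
The plan is to read off the corollary directly from comparing the two theorems already established. Theorem~\ref{thm1} characterizes global hypoellipticity of $L=X_1+aX_2$ by the conjunction of two conditions: (1) the resonance set $\mathcal{N}$ is finite, and (2) there exist $C,M>0$ with $|\lambda_m(\xi)+a\mu_r(\eta)|\geq C(\jp{\xi}+\jp{\eta})^{-M}$ whenever the left-hand side is nonzero. Theorem~\ref{solvability} characterizes global solvability of $L$ by exactly condition (2) alone (condition \eqref{condition1} being identical to \eqref{hypothesis}). Hence if $L$ is globally hypoelliptic, then in particular condition (2) holds, which is precisely the necessary and sufficient condition for global solvability; therefore $L$ is globally solvable.

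So the entire argument is: assume $L$ globally hypoelliptic; apply the ``only if'' direction of Theorem~\ref{thm1} to extract estimate \eqref{hypothesis}; observe that \eqref{hypothesis} is verbatim \eqref{condition1}; apply the ``if'' direction of Theorem~\ref{solvability} to conclude $L(\DG)=\mathcal{K}$. No computation is needed beyond noting the syntactic identity of the two displayed inequalities.

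There is essentially no obstacle here, since all the work has been done in proving Theorems~\ref{thm1} and~\ref{solvability}. The only point worth a sentence of care is that global solvability does not require the finiteness of $\mathcal{N}$ (condition (1) of Theorem~\ref{thm1}), so global hypoellipticity is strictly the stronger of the two properties and the implication is one-directional; the converse fails precisely when $\mathcal{N}$ is infinite but the Diophantine-type estimate still holds on the complement of $\mathcal{N}$. One could optionally remark that this mirrors the classical torus picture. The proof itself is a two-line invocation of the preceding results.
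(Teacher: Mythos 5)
Your proof is correct and is exactly the argument the paper uses: the corollary is deduced by observing that condition (2) of Theorem~\ref{thm1} coincides verbatim with the characterization of global solvability in Theorem~\ref{solvability}. Nothing further is needed.
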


A more detailed analysis of the last proof shows that it is possible to obtain a better control on the Fourier coefficients of $u$ when $f$ is smooth, more precisely, we have the following result.

\begin{prop}\label{solvabilitysmooth}
	If $L$ is globally solvable and $f \in \mathcal{K}\cap C^{\infty}(G)$, then there exists $u \in C^{\infty}(G)$ such that $Lu=f$. 
\end{prop}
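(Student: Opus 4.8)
The plan is to take the explicit candidate solution from the sufficiency direction of Theorem~\ref{solvability} and observe that, when $f$ is smooth, it actually produces a smooth $u$. Since $L$ is globally solvable, Theorem~\ref{solvability} provides constants $C,M>0$ such that $|\lambda_m(\xi)+a\mu_r(\eta)|\geq C(\jp{\xi}+\jp{\eta})^{-M}$ whenever $\lambda_m(\xi)+a\mu_r(\eta)\neq 0$, that is, \eqref{condition1} holds. Given $f\in\mathcal{K}\cap C^\infty(G)$, I would define $u$ through its Fourier coefficients exactly as in \eqref{solution1}: set $\doublehat{u}(\xi,\eta)_{mn_{rs}}=0$ on the kernel directions $\lambda_m(\xi)+a\mu_r(\eta)=0$, and $\doublehat{u}(\xi,\eta)_{mn_{rs}}=-i(\lambda_m(\xi)+a\mu_r(\eta))^{-1}\doublehat{f}(\xi,\eta)_{mn_{rs}}$ otherwise. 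This is consistent because $f\in\mathcal{K}$ forces $\doublehat{f}(\xi,\eta)_{mn_{rs}}=0$ on the kernel directions.

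Next I would show $u\in C^\infty(G)$ via the characterization in Proposition~\ref{smoopartial}. Fix $N>0$. On the kernel directions $\doublehat{u}(\xi,\eta)_{mn_{rs}}=0$, so there is nothing to estimate. Off the kernel directions, combine the lower bound \eqref{condition1} with the rapid decay of $\doublehat{f}$ from Proposition~\ref{smoopartial} applied with exponent $N+M$ (so there is $C_{N+M}>0$ with $|\doublehat{f}(\xi,\eta)_{mn_{rs}}|\leq C_{N+M}(\jp{\xi}+\jp{\eta})^{-(N+M)}$), obtaining
\[
|\doublehat{u}(\xi,\eta)_{mn_{rs}}|=|\lambda_m(\xi)+a\mu_r(\eta)|^{-1}\,|\doublehat{f}(\xi,\eta)_{mn_{rs}}|\leq C^{-1}C_{N+M}(\jp{\xi}+\jp{\eta})^{-N}.
\]
Since $N>0$ is arbitrary, Proposition~\ref{smoopartial} yields that the function $u$ with these Fourier coefficients lies in $C^\infty(G)$.

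Finally I would verify $Lu=f$ exactly as in Section~3: $\doublehat{Lu}(\xi,\eta)_{mn_{rs}}=i(\lambda_m(\xi)+a\mu_r(\eta))\doublehat{u}(\xi,\eta)_{mn_{rs}}$, which equals $\doublehat{f}(\xi,\eta)_{mn_{rs}}$ off the kernel directions by the definition of $u$, and equals $0=\doublehat{f}(\xi,\eta)_{mn_{rs}}$ on the kernel directions (again using $f\in\mathcal{K}$); the Plancherel formula \eqref{plancherel} then gives $Lu=f$. I do not expect a genuine obstacle here: the argument is essentially a repetition of the sufficiency part of Theorem~\ref{solvability}, the only change being that the decay estimate for $\doublehat{f}$ now forces rapid decay of $\doublehat{u}$ rather than mere polynomial growth. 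The single point deserving care is that it is the hypothesis $f\in\mathcal{K}$, and not merely $f\in C^\infty(G)$, that makes the formula \eqref{solution1} well posed and delivers $Lu=f$ exactly rather than modulo the kernel.
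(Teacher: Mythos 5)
Your proposal is correct and follows essentially the same route as the paper: define $u$ by \eqref{solution1}, use the lower bound \eqref{condition1} from global solvability together with the rapid decay of $\doublehat{\,f\,}$ to conclude $u\in C^\infty(G)$ via Proposition~\ref{smoopartial}, and verify $Lu=f$ using $f\in\mathcal{K}$. No issues.
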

\begin{proof}

Let $f \in \mathcal{K}\cap C^{\infty}(G)$ and define $u$ as in \eqref{solution1}. Since $L$ is globally solvable, we have \eqref{condition1}, and then by \eqref{construct}
\begin{align}
|\doublehat{\, u \,}\!(\xi,\eta)_{mn_{rs}}|&= |{\lambda_m(\xi)+ a \mu_r(\eta)}|^{-1} |\doublehat{\,f\,}\!(\xi,\eta)_{mn_{rs}}|\nonumber\\
&\leq C(\langle \xi \rangle +\langle \eta \rangle)^{M}|\doublehat{\,f\,}\!(\xi,\eta)_{mn_{rs}}|\nonumber.
\end{align}
In view of the smoothness of $f$, for every $N>0$ there exists $C_N>0$ such that
$$
|\doublehat{\,f\,}\!(\xi,\eta)_{mn_{rs}}|\leq C_N(\jp{\xi}+\jp{\eta})^{-N},
$$
for all   $[\xi] \in \widehat{G_1}, \ [\eta] \in \widehat{G_2},\  1 \leq m \leq d_\xi,\  1 \leq r \leq d_\eta$.
Hence
$$
|\doublehat{\, u \,}\!(\xi,\eta)_{mn_{rs}}| \leq C(\langle \xi \rangle +\langle \eta \rangle)^{M}|\doublehat{\,f\,}\!(\xi,\eta)_{mn_{rs}}| \leq C_{N+M}(\jp{\xi}+\jp{\eta})^{-N}.
$$
Therefore $u\in C^\infty(G)$ and $Lu=f$.
\end{proof}
\begin{rem}
	The proposition above says that we can obtain a smooth solution for $Lu=f$ in the case where $L$ is globally solvable and $f$ is a smooth admissible function. Notice that this does not mean that $L$ is globally hypoelliptic.
\end{rem}
\subsection{Examples}

In this section we recover some classical examples of S. Greenfield and N. Wallach (see \cite{GW72}), on the global hypoellipticity 
and global solvability in tori ($\mathbb{T}^2$ and $\mathbb{T}^d$) and present a class of examples in $\mathbb{T}^1\times\St$.

\begin{ex}{$G=\mathbb{T}^2$}\label{exampleliouville}
	\end{ex}
Set $G_1=G_2=\mathbb{T}^1$, where $\mathbb{T}^1= \R/2\pi\Z$. Since $\mathbb{T}^1$ is abelian, the irreducible unitary representations of $\mathbb{T}^1$ are unidimensional. Moreover the dual $\widehat{\mathbb{T}^1}$ can be identified to $\Z$. 
For each $k\in \Z$, the function $e_{k}: \mathbb{T}^1 \to \mathcal{U}(\C)$ defined by
$$
e_k(t):= e^{i tk}
$$
is an element of $\widehat{\mathbb{T}^1}$ and
$$
\widehat{\mathbb{T}^1} \cong \{e_{k}\}_{k \in \Z}.
$$
The Haar measure on $\mathbb{T}^1$ is the normalized Lebesgue measure and
$$
\jp{k} := \jp{e_k} = \sqrt{1+k^2}. 
$$

Let $a \in \C$ and consider the operator
$$
L=\partial_t + a \partial_x, \quad (t,x) \in \mathbb{T}^1 \times \mathbb{T}^1.
$$
Notice that
$$
\sigma_{\partial_t}(e_k) = e_k(t)^* (\partial_t e_k)(t) = e^{-itk} (ik e^{itk}) = ik,
$$
that is, $\lambda(e_k) = k$, for all $k \in \Z$. Thus, if $Lu=f$, then
$$
\doublehat{\,f\,}\!(k,\ell) = i(k+a \ell)\ \doublehat{\, u \,}\!(k,\ell).
$$
In this case,
$$
\mathcal{N} = \{(k,\ell) \in \Z^2; \ k+a \ell = 0  \}.
$$
By Theorem \ref{thm1}, $L$ is globally hypoelliptic if and only if $\mathcal{N}$ is finite and there exist $C,M>0$ such that
$$
|k+a \ell| \geq C (\jp{k} + \jp{\ell})^{-M}
$$
for all $(k,\ell) \in \Z^2$, whenever $k+a \ell \neq 0$.
For $(k,\ell) \neq (0,0)$, we have
$$
|k|+|\ell| \leq \jp{k}+ \jp{\ell} \leq 3(|k|+|\ell|),
$$
then the second condition of the Theorem \ref{thm1} becomes
\begin{equation}\label{liouville}
|k+a \ell| \geq C (|k|+|\ell|)^{-M}
\end{equation}
for all $(k,\ell) \in \Z^2$, whenever $k+a \ell \neq 0$. 

Notice that $\mathcal{N}$ is an infinity set if and only if $a \in \Q$. Moreover, if $a \notin \Q$, then $\mathcal{N} = \{ (0,0)\}$. Suppose that $\mbox{Im} (a) \neq 0$. If $\ell \neq 0$, then
$$
|k+a \ell| \geq |\mbox{Im}(a)| |\ell| \geq |\mbox{Im}(a)| (|k|+|\ell|)^{-1}.
$$
If $\ell=0$, we have $k\neq0$ and
$$
|k+a \ell| = |k| \geq (|k|+|\ell|)^{-1}.
$$
Take $C = \max \{1, |\mbox{Im}(a)|\}$. Then
$$
|k+a \ell| \geq C(|k|+|\ell|)^{-1},
$$
for all $(k,\ell) \in \Z^2 \setminus \{(0,0) \}$. Therefore, if $\mbox{Im}(a)\neq 0$ then $L$ is globally hypoelliptic.

Suppose now that $\mbox{Im}(a) = 0$. We recall that an irrational number $a$ is called a Liouville number if it can be approximated by rational numbers to any order. That is, for every positive integer $N$ there is $K>0$ and infinitely many integer pairs $(k,\ell)$ so that 
$$
\left|a-\frac{k}{\ell}\right| < \frac{K}{\ell^N}.
$$

 Notice that the inequality \eqref{liouville} is satisfied if and only if $a$ is an irrational non-Liouville number. 

We conclude that $L=\partial_t+a \partial_x$ is globally hypoelliptic if and only if $\mbox{Im}(a)\neq0$ or $a$ is an irrational non-Liouville number.

For solvability we need to analyze the condition 2 of the Theorem \ref{thm1} when $a \in \Q$. Suppose that $a = \frac{p}{q}$, $p\in \Z$ and $q \in \N$. We have
$$
|k+a \ell| = \left|k+ \frac{p}{q}\ell\right| = \frac{1}{q}|qk+p\ell| \geq \frac{1}{q} \geq \frac{1}{q}(|k|+|\ell|)^{-1},
$$
for all $(k,\ell)\in \Z^2$, whenever $qk+p\ell \neq 0$. 

Therefore, $L=\partial_t+a \partial_x$ is globally solvable if and only if $\mbox{Im}(a)\neq 0$, or $a \in \Q$, or $a$ is an irrational non-Liouville number.

\begin{ex}{$G=\mathbb{T}^d$}
	\end{ex}
From the above example we can extend the analysis for operators defined on $\mathbb{T}^d$. Let
$$
L=\sum_{j=1}^d a_j \partial_{t_j}, \quad a_j\in \C 
$$
If $Lu=f$, then
$$
\widehat{f}(k_1, \cdots, k_d) = i\left(\sum_{j=1}^d a_j k_j\right) \widehat{u}(k_1, \cdots, k_d).
$$
The set $\mathcal{N}$ is
$$
\mathcal{N} = \left\{k \in \Z^d ;  \sum_{j=1}^d a_jk_j=0 \right\},
$$
and by Theorem \ref{thm1}, $L$ is globally hypoelliptic if and only if $\mathcal{N}$ is finite and there exists $C,M>0$ such that
$$
\left| \sum_{j=1}^d a_j k_j\right| \geq C \left(\sum_{j=1}^d |k_j|\right)^{-M},
$$
for all $k\in \Z^d$ whenever $\sum\limits_{j=1}^d a_j k_j \neq0$. 

For instance, if some $a_j=0$, then the set $\mathcal{N}$ is infinity, which implies that $L$ is not globally hypoelliptic. It is easy to see that if all $a_j \in \Q$, then $L$ is globally solvable, even if some of $a_j=0$.

If $a_j=1$ for $j=1, \cdots, d-1$ and $\mbox{Im}(a_d)\neq 0$, than $L$ is globally hypoelliptic. The same is true if we consider $a_d$ being an irrational non-Liouville number.

\begin{ex}{$G=\mathbb{T}^1 \times \mathbb{S}^3$}\label{examples3}
	\end{ex}
Let $\Sth$ be the unitary dual of $\St$, that is, $\Sth$ consists of equivalence classes $[\textsf{t}^\ell]$ of continuous irreducible unitary representations
$\textsf{t}^\ell:\St\to \C^{(2\ell+1)\times (2\ell+1)}$, $\ell\in\frac12\N_{0}$,
of matrix-valued functions satisfying
$\textsf{t}^\ell(xy)=\textsf{t}^\ell(x)\textsf{t}^\ell(y)$ and $\textsf{t}^\ell(x)^{*}=\textsf{t}^\ell(x)^{-1}$ for all
$x,y\in\St$.
We will use the standard convention of enumerating the matrix elements
$\textsf{t}^\ell_{mn}$ of $\textsf{t}^\ell$ using indices $m,n$ ranging between
$-\ell$ to $\ell$ with step one, i.e. we have $-\ell\leq m,n\leq\ell$
with $\ell-m, \ell-n\in\N_{0}.$ For $\ell \in \tfrac{1}{2} \N_0$ we have
$$
\jp{\ell} := \jp{\textsf{t}^\ell} = \sqrt{1+\ell(\ell+1)}. 
$$The details about the Fourier analysis on $\St$ can be found in Chapter 11 of \cite{livropseudo}.

Let $X$ be a smooth vector field on $\St$ and $a \in \C$. Consider the following operator defined on $\mathbb{T}^1 \times \St$:
$$
L=\partial_t + a X.
$$
Using rotation on $\St$, without loss of generality, we may assume that the vector
field $X$ has the symbol 
$$
\sigma_{X}(\ell)_{mn}=im\delta_{mn}, \quad \ell \in \tfrac{1}{2}\N_0, \ -\ell\leq m,n\leq \ell, \ \ell-m, \ell-n \in \N_0,
$$ 
with $\delta_{mn}$ standing for the Kronecker's delta (see \cite{livropseudo}, \cite{RT13}, and \cite{RTW14}). Hence, if $Lu=f$, then
$$
\doublehat{\,f\,}\!(k,\ell)_{mn} = i(k+a m) \doublehat{\, u \,}\!(k,\ell)_{mn},
$$
where $k \in \Z$, $\ell \in \frac{1}{2}\N_0$, $-\ell \leq m,n \leq \ell$ and $\ell-m, \ell-n \in \N_0$. In this case,
$$
\mathcal{N} = \{(k,\ell) \in \Z \times \tfrac{1}{2}\N_0;\  k+a m = 0, \mbox{ for some } -\ell \leq m \leq \ell, \ell-m \in \N_0 \}.
$$
By Theorem \ref{thm1}, $L$ is globally hypoelliptic if and only if $\mathcal{N}$ is finite and there exist $C,M>0$ such that
\begin{equation}\label{condi}
|k+a m| \geq C (\jp{k} + \jp{\ell})^{-M}
\end{equation}
for all $(k,\ell) \in \Z\times \tfrac{1}{2}\N_0$, $ -\ell \leq m \leq \ell, \ \ell-m \in \N_0$ whenever $k+a m \neq 0$.
For $\ell \in \tfrac{1}{2}\N_0$, we have
$$
\frac{1}{\sqrt{2}}(1+\ell) \leq \langle \textsf{t}^\ell \rangle \leq 1+\ell
$$
and we can write \eqref{condi} as
\begin{equation*}
|k+a m| \geq C (|k|+1+\ell)^{-M}
\end{equation*}
for all $(k,\ell) \in \Z\times \tfrac{1}{2}\N_0$, $ -\ell \leq m \leq \ell, \ \ell-m \in \N_0$ whenever $k+a m \neq 0$.

Notice that $(0,\ell) \in \mathcal{N}$, for all $\ell \in \N_0$, so $\mathcal{N}$ has infinitely many elements and then $L$ is not globally hypoelliptic for any $a \in \C$. 

The analysis of the global solvability of $L$ is similar to the $\mathbb{T}^2$ case and we have that  $L$ globally solvable if and only if $\mbox{Im}(a) \neq0$, or $a \in \Q$, or $a$ is an irrational non-Liouville number.

\section{Weaker notions of hypoellipticity}\label{weak}
All the known examples of globally hypoelliptic vector fields are set on tori. Actually, in 1973, S. Greenfield and N. Wallach proposed the following conjecture.

\begin{conjecture}[Greenfield-Wallach]\label{GW} If a closed, connected, orientable manifold $M$ admits a globally hypoelliptic vector field $X$, then $M$ is diffeomorphic to a torus and $X$ is smoothly conjugate to a constant Diophantine vector field.
\end{conjecture}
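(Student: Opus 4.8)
What follows is a proposed line of attack; the statement is the long-standing Greenfield--Wallach conjecture and remains open in general, so this is necessarily a sketch of strategy and of the obstacle rather than a proof. The plan is to split the problem into a dynamical part (rigidity of the flow of a globally hypoelliptic $X$) and a topological part (recognising $M$ as a torus), the torus case being the explicit computation already carried out in Example~\ref{exampleliouville}.

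On the dynamical side I would first check that a globally hypoelliptic $X$ is nowhere zero, since a zero of $X$ lets a cut-off construction produce $u \in \mathcal{D}'(M) \setminus C^\infty(M)$ with $Xu \in C^\infty(M)$. Then $X$ generates a complete flow $\{\phi_t\}$, which must be minimal --- a proper minimal set carries an invariant probability measure that, regarded as a distribution, is annihilated by $X$ but is not smooth, contradicting hypoellipticity --- and, by comparing two invariant measures, uniquely ergodic with a smooth invariant volume $\mu$. The decisive structural output is that, for nowhere-vanishing vector fields, global hypoellipticity is equivalent to being \emph{cohomology-free}: for every $f \in C^\infty(M)$ the equation $Xu = f - \int_M f \, d\mu$ has a smooth solution, unique up to an additive constant. (Solvability in $\mathcal{D}'(M)$ for mean-zero data follows from unique ergodicity, and smoothness of $u$ is then exactly global hypoellipticity.) In particular the conjecture is equivalent to Katok's conjecture on cohomology-free vector fields.

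On the topological side I would try to cash this in. From $X$ nowhere zero one already has $\chi(M) = 0$; pairing the de Rham complex with $X$, the aim is to produce a closed $1$-form $\alpha$ with $\alpha(X) > 0$, hence --- after adjusting periods --- a fibration $M \to \mathbb{T}^1$ transverse to $X$, so that $X$ becomes, up to smooth reparametrisation, the suspension of a diffeomorphism $\psi$ of the fibre $F$. One would then show $\psi$ inherits the analogous rigidity (a cohomology-free diffeomorphism), drop the dimension, and close the argument at the torus base case. Forni carried a version of this through for $\dim M = 3$ in \cite{F08}; together with the elementary low-dimensional cases and a few special settings, that is essentially all that is known.

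The hard part is precisely this reduction in dimension $\ge 4$: the cohomological constraints do not determine the diffeomorphism type of $M$, there is no known mechanism forcing the suspension fibre $F$ to be a torus, and one lacks any classification of minimal, uniquely ergodic flows --- or of the cohomology-free diffeomorphisms one is led to --- with trivial reduced cohomology. It remains conceivable, though not expected, that exotic such systems live off the torus. This is why the statement is recorded here only as motivation, and why the paper turns instead to the weaker notions of regularity --- global hypoellipticity modulo the kernel, and $\mathcal{W}$-global hypoellipticity --- that admit complete characterisations on arbitrary products of compact Lie groups.
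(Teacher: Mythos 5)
You have correctly recognised that this statement is an open conjecture, not a theorem of the paper: the paper records it purely as motivation, offers no proof, and only cites Forni's equivalence with Katok's conjecture (stated here as Theorem~\ref{fornithm}) and the fact that the conjecture is known in dimensions $2$ and $3$ and in a few special cases. Your sketch of the state of the art --- nonvanishing of $X$, minimality and unique ergodicity of the flow, the equivalence of global hypoellipticity with the cohomology-free property, and the unresolved dimension-reduction step in dimension $\ge 4$ --- is consistent with the literature the paper relies on, and your identification of the obstacle is accurate. There is nothing to compare against in the paper's own text, so no further assessment is warranted.
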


In \cite{F08}, G. Forni showed the equivalence between this conjecture and Katok's conjecture, about the existence of $C^\infty$--co\-ho\-mo\-lo\-gy free smooth vector fields on closed, connected, orientable smooth manifolds. From this equivalence we will show that on compact connected Lie groups the set $\mathcal{N}$ defined in \eqref{setN} contains only the trivial representation. First, let us define what is a $C^\infty$--cohomology free vector field.
\begin{defi}\label{CH}
	Let $M$ be a closed, connected, orientable smooth manifold. A smooth vector field $X$ on $M$ is $C^\infty$--cohomology free if for all $f \in C^\infty(M)$ there exists a constant $c(f) \in \C$ and $u \in C^\infty(M)$ such that
	$$
	Xu=f-c(f).
	$$
\end{defi}

\begin{thm}\label{fornithm}[G. Forni \cite{F08}]
	Let $X$ be a smooth vector field on a closed connected manifold $M$. Then $X$ is $C^\infty$--cohomology free if and only if $X$ is globally hypoelliptic.
\end{thm}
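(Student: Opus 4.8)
Both implications can be organised around two objects attached to $X$: the space of $X$-invariant distributions, and the solution operator of the cohomological equation $Xu=g$ on the Sobolev scale of $M$. Fix a smooth positive density on $M$, write $X^{*}=-X-\operatorname{div}(X)$ for the associated formal adjoint (again a first order operator, with $\operatorname{div}(X)\in C^{\infty}(M)$), and recall that by the Krylov--Bogolyubov theorem $X$ possesses at least one invariant Borel probability measure $\mu$; set $c(f):=\int_{M}f\,d\mu$.

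\emph{$C^{\infty}$-cohomology freeness $\Rightarrow$ global hypoellipticity.} First I would extract the soft consequences of Definition~\ref{CH}. The algebraic splitting $C^{\infty}(M)=X(C^{\infty}(M))\oplus\C$ forces any linear functional annihilating $X(C^{\infty}(M))$ to be determined by its value on the constants; since any $X$-invariant probability measure is such a functional and does not vanish on $\C$, the space of these functionals is exactly one dimensional. Consequently $c$ is continuous, $X(C^{\infty}(M))=\ker c$ is a closed hyperplane, $\mu$ is the unique invariant probability measure, and $\ker(X|_{C^{\infty}(M)})=\C$. The closed graph theorem then shows that the solution operator $S$, assigning to $f$ the unique $u$ with $Xu=f-c(f)$ and $c(u)=0$, is continuous on $C^{\infty}(M)$; a short computation identifies its transpose (with respect to $\mu$) with the corresponding operator for $-X$, which is again cohomology free, so $S^{t}$ also maps $C^{\infty}(M)$ into itself. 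The decisive step is to upgrade $\mu$ to a smooth positive volume form. Taking this volume as reference density we obtain $\operatorname{div}(X)=0$ and $X^{*}=-X$, so that for $u\in\mathcal{D}'(M)$ with $Xu=f\in C^{\infty}(M)$ one has $c(f)=\langle Xu,1\rangle=\langle u,X^{*}1\rangle=0$, and expanding an arbitrary test function as $\psi=X(S\psi)+c(\psi)$ yields
\[
\langle u,\psi\rangle=-\langle f,S\psi\rangle+\langle u,1\rangle\,c(\psi)
\]
for every $\psi$; since $S^{t}f\in C^{\infty}(M)$ and $c(\psi)=\langle 1,\psi\rangle$, this gives $u=-S^{t}f+\langle u,1\rangle\cdot 1\in C^{\infty}(M)$.

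\emph{Global hypoellipticity $\Rightarrow$ $C^{\infty}$-cohomology freeness.} Here I would use that global hypoellipticity on a compact manifold is equivalent, via the open mapping theorem applied to the Fr\'echet space $\{u\in\mathcal{D}'(M):Xu\in C^{\infty}(M)\}$, to an a priori estimate of the form
\[
\|u\|_{H^{s}}\le C_{s}\bigl(\|Xu\|_{H^{s+M_{0}}}+\|u\|_{H^{-N_{0}}}\bigr),
\]
with $M_{0},N_{0}$ independent of $s$. Feeding $u\in\ker(X|_{\mathcal{D}'(M)})$ into this estimate makes all Sobolev norms equivalent on the kernel, so Rellich compactness and Riesz's lemma show the kernel is finite dimensional; being a finite dimensional subalgebra of $C^{\infty}(M)$ containing the constants, it equals $\C$ because $M$ is connected. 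Combined with Peetre's lemma the estimate also yields that $X(C^{\infty}(M))$ is closed in $C^{\infty}(M)$; by the bipolar theorem it is therefore the common kernel of the $X$-invariant distributions, i.e.\ of the distributions $D$ with $\langle D,Xg\rangle=0$ for all $g$. The crux is that these span only the line $\C\mu$, which gives $X(C^{\infty}(M))=\{f:\int_{M}f\,d\mu=0\}$; then $Xu=f-c(f)$ is solvable in $C^{\infty}(M)$ for every $f$, i.e.\ $X$ is $C^{\infty}$-cohomology free.

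I expect the real work to sit in the rigidity facts hidden behind the two decisive steps: that a $C^{\infty}$-cohomology free $X$ is minimal and uniquely ergodic with a \emph{smooth} positive invariant volume, and that a globally hypoelliptic $X$ admits only a one dimensional space of invariant distributions (equivalently, the range of $X$ on $C^{\infty}(M)$ has codimension exactly one and the invariant volume is smooth). All of these assertions fail for Liouville constant vector fields on $\mathbb{T}^{2}$, which is precisely what makes them the heart of the theorem; the remaining ingredients --- Krylov--Bogolyubov, the closed graph / open mapping / Hahn--Banach machinery, Peetre's lemma and Rellich--Riesz, and the connectedness argument for the kernel --- are routine.
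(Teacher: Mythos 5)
The paper does not prove this statement at all: it is quoted from Forni \cite{F08} and used as a black box (e.g.\ in Proposition~\ref{GHtrivial}), so there is no in-paper argument to compare against and your sketch must stand on its own. Its architecture does match the standard Greenfield--Wallach/Forni route: the identity $u=-S^{t}f+\langle u,1\rangle\cdot 1$ in one direction, and the a priori estimate / closed range / bipolar scheme in the other. Those soft steps are essentially right, with one technical quibble: $\{u\in\mathcal{D}'(M):Xu\in C^{\infty}(M)\}$ is not a Fr\'echet space, and the uniformity of the exponents $M_{0},N_{0}$ in $s$ is neither automatic from Baire category nor needed --- a single estimate $\|u\|_{s_{0}}\le C(\|Xu\|_{s_{1}}+\|u\|_{s_{2}})$ with $s_{0}>s_{2}$ already yields finite-dimensional kernel and, via Peetre, closed range.

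The genuine gap is that the two facts you yourself call ``the heart of the theorem'' are asserted rather than proved, and they carry essentially all the content. First, for cohomology-freeness implying global hypoellipticity you must know that the unique invariant measure is a \emph{smooth positive volume} before you may write $X^{*}=-X$; otherwise $X^{t}=-X-\operatorname{div}X$ and both the computation $c(f)=\langle Xu,1\rangle=0$ and the pairing $\langle u,X(S\psi)\rangle=-\langle Xu,S\psi\rangle$ break down. This step admits a short argument you should include: fix any volume $\omega$, solve $Xg=-\operatorname{div}_{\omega}X+c_{0}$ by cohomology-freeness, note that $\operatorname{div}_{\omega}(e^{g}X)=c_{0}e^{g}$ has zero integral by Stokes, hence $c_{0}=0$ and $e^{g}\omega$ is an invariant smooth volume, which coincides with $\mu$ by unique ergodicity. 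Second, and more seriously, for global hypoellipticity implying cohomology-freeness the crux is that the annihilator of $X(C^{\infty}(M))$, i.e.\ $\ker(X^{t})$ in $\mathcal{D}'(M)$, is one-dimensional. Your estimate controls $\ker X$, not $\ker(-X-\operatorname{div}X)$, and global hypoellipticity is \emph{not} stable under zero-order perturbations (Section~5 of this very paper gives counterexamples on $\mathbb{S}^{3}$), so the regularity and finite-dimensionality arguments cannot simply be transferred to $X^{t}$. That a globally hypoelliptic vector field preserves a smooth volume and has only a one-dimensional space of invariant distributions is the Greenfield--Wallach rigidity theorem and requires its own proof. As written, the proposal is a correct outline with the two load-bearing lemmas missing.
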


\begin{prop}\label{GHtrivial}
	If $G$ is a compact connected Lie group and $L$ is globally hypoelliptic, then $\mathcal{N} $ has only one element.
\end{prop}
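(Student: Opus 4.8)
The plan is to combine Forni's theorem (Theorem~\ref{fornithm}) with the obstruction to solvability encoded in~\eqref{image0}. First observe that the pair of trivial representations always belongs to $\mathcal N$: for any left-invariant vector field the symbol at the trivial representation of $G_j$ vanishes, so the associated eigenvalues satisfy $\lambda_1(\mathbf 1)=\mu_1(\mathbf 1)=0$ and hence $\lambda_1(\mathbf 1)+a\mu_1(\mathbf 1)=0$. Thus it suffices to show that $\mathcal N$ contains no further element, and I argue by contradiction: assume $([\xi_0],[\eta_0])\in\mathcal N$ with $[\xi_0]$ or $[\eta_0]$ nontrivial, and fix indices $m_0,r_0$ with $\lambda_{m_0}(\xi_0)+a\mu_{r_0}(\eta_0)=0$. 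The argument then splits according to whether $a$ is real, since Forni's theorem applies only to honest real vector fields.

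Suppose first that $a\in\R$. Then $L=X_1+aX_2$ is a real left-invariant vector field on the closed, connected, orientable manifold $G$, so by Theorem~\ref{fornithm} global hypoellipticity of $L$ is equivalent to $L$ being $C^\infty$--cohomology free. Integrating the identity $Lu=f-c(f)$ over $G$ against the normalized Haar measure, and using that left-invariant vector fields are divergence free (right-invariance of the Haar measure gives $\int_G Lu\,dx=0$), one obtains $c(f)=\widehat f(\mathbf 1)$; hence every $f\in C^\infty(G)$ with $\widehat f(\mathbf 1)=0$ equals $Lu$ for some $u\in C^\infty(G)$. On the other hand, the computation preceding~\eqref{image0} shows that for any $u\in\DG$ one has $\doublehat{Lu}\!(\xi_0,\eta_0)_{m_0 n_{r_0 s}}=i(\lambda_{m_0}(\xi_0)+a\mu_{r_0}(\eta_0))\doublehat{u}\!(\xi_0,\eta_0)_{m_0 n_{r_0 s}}=0$ for all $n,s$. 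Since $[\xi_0]$ or $[\eta_0]$ is nontrivial, I would then choose $f$ to be a product of a matrix coefficient of $\xi_0$ and a matrix coefficient of $\eta_0$, arranged via the orthogonality relations so that $\doublehat f\!(\xi_0,\eta_0)_{m_0 n_{r_0 s}}\neq 0$ for some $n,s$; such an $f$ is smooth, and it has $\widehat f(\mathbf 1)=0$ because a matrix coefficient of a nontrivial irreducible representation is orthogonal to the constants. This $f$ cannot be in the range of $L$, contradicting $C^\infty$--cohomology freeness.

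Suppose now that $\operatorname{Im}(a)\neq 0$. Here I would first note that global hypoellipticity of $L$ on $G$ forces global hypoellipticity of $X_1$ on $G_1$ and of $X_2$ on $G_2$: if $X_1v=g$ with $v\in\mathcal D'(G_1)$ and $g\in C^\infty(G_1)$, then $u(x_1,x_2):=v(x_1)$ satisfies $Lu=g\otimes 1\in C^\infty(G)$, so $u\in C^\infty(G)$ and therefore $v\in C^\infty(G_1)$; the argument for $X_2$ is symmetric (this uses $a\neq 0$). Applying Theorem~\ref{fornithm} on each factor and running the construction of the previous paragraph on a single factor shows that $\sigma_{X_1}(\xi)$ is invertible for every nontrivial $[\xi]\in\widehat{G_1}$ and $\sigma_{X_2}(\eta)$ is invertible for every nontrivial $[\eta]\in\widehat{G_2}$; equivalently, $\lambda_m(\xi)=0$ forces $[\xi]$ trivial and $\mu_r(\eta)=0$ forces $[\eta]$ trivial. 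Now take imaginary parts in $\lambda_{m_0}(\xi_0)+a\mu_{r_0}(\eta_0)=0$: since $\lambda_{m_0}(\xi_0),\mu_{r_0}(\eta_0)\in\R$, this gives $\operatorname{Im}(a)\,\mu_{r_0}(\eta_0)=0$, hence $\mu_{r_0}(\eta_0)=0$ and $[\eta_0]$ is trivial; the real part then gives $\lambda_{m_0}(\xi_0)=0$, so $[\xi_0]$ is trivial as well, contradicting the choice of $([\xi_0],[\eta_0])$.

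I expect the main difficulty to be bookkeeping rather than conceptual: one must pin down the normalization of matrix coefficients and the double-hat indexing carefully enough to exhibit a smooth mean-zero function with a prescribed nonzero entry in its partial Fourier transform, and one must keep the real and non-real cases for $a$ separate because Forni's theorem is a statement about genuine vector fields. The degenerate cases in which $G_1$ or $G_2$ reduces to a point, or $a=0$, cause no trouble: they are either covered by the $a\in\R$ argument applied on the nontrivial factor, or excluded outright since then $L$ fails to be globally hypoelliptic.
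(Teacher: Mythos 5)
Your proof is correct, and its core --- testing against the matrix coefficient $f=\xi_{1m_0}\times\eta_{1r_0}$, whose Fourier entry $\doublehat{\,f\,}(\xi_0,\eta_0)_{m_0 1_{r_0 1}}=(d_{\xi_0}d_{\eta_0})^{-1}$ is nonzero by Schur orthogonality, while $\doublehat{Lu}(\xi_0,\eta_0)_{m_0 1_{r_0 1}}=i(\lambda_{m_0}(\xi_0)+a\mu_{r_0}(\eta_0))\doublehat{\,u\,}(\xi_0,\eta_0)_{m_0 1_{r_0 1}}=0$ once Forni's theorem supplies a smooth solution of $Lu=f-c(f)$ --- is exactly the paper's argument. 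Where you depart from the paper is the case $\operatorname{Im}(a)\neq 0$: the paper applies Theorem \ref{fornithm} to $L=X_1+aX_2$ without comment, even though for non-real $a$ this operator is not a genuine (real) vector field and Forni's equivalence is stated only for smooth vector fields on closed connected manifolds. Your workaround --- deducing global hypoellipticity of $X_1$ on $G_1$ and of $X_2$ on $G_2$ from that of $L$ by lifting distributions constant in one variable, applying Forni factor-wise to conclude that $\sigma_{X_1}(\xi)$ and $\sigma_{X_2}(\eta)$ are nonsingular at every nontrivial representation, and then separating real and imaginary parts of $\lambda_{m_0}(\xi_0)+a\mu_{r_0}(\eta_0)=0$ --- is sound and closes this small gap; the factor-wise step is genuinely needed, since for $\operatorname{Im}(a)\neq0$ the relation only reduces to $\lambda_{m_0}(\xi_0)=\mu_{r_0}(\eta_0)=0$, which by itself does not force the representations to be trivial (compare Example \ref{examples3}, where $\mathcal{N}$ is infinite for every $a$). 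Two minor remarks: the explicit identification $c(f)=\widehat f(\mathbf 1)$ is not needed, because any constant has vanishing Fourier coefficient at a nontrivial representation of $G$, so the contradiction goes through for whatever constant the cohomology-free property produces; and the connectedness of $G=G_1\times G_2$ does pass to each factor, as your factor-wise use of Forni's theorem requires.
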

\begin{proof}
	Notice that for the trivial representations $\mathds{1}_{G_1}$ and $\mathds{1}_{G_2}$ we have $\lambda_1(\mathds{1}_{G_1})=\mu_1(\mathds{1}_{G_2})=0$, so $\mathcal{N} \neq \varnothing$.
	Suppose that there exists a non-trivial representation such that
	$$
	\lambda_m(\xi)+a \mu_r(\eta) = 0.
	$$
	for some $1 \leq m \leq d_\xi$, $1 \leq r \leq d_\eta$.
	Let $f = \xi_{{1m}} \times \eta_{{1r}} \in C^\infty(G)$, so
	
	\begin{align}
	\doublehat{\,f\,}\!(\xi,\eta)_{m1_{r1}}&= \int_{G_1}\int_{G_2} f(x_1,x_2) \overline{\xi(x_1)_{1m}} \, \overline{\eta(x_2)_{1r}} dx_2 dx_1\nonumber \\
	&= \int_{G_1}\int_{G_2} \xi(x_1)_{1m}\eta(x_2)_{1r} \overline{\xi(x_1)_{1m}} \, \overline{\eta(x_2)_{1r}} dx_2 dx_1 \nonumber \\
	&= \int_{G_1} |\xi(x_1)_{1m}|^2 \, dx_1 \int_{G_2} |\eta(x_2)_{1r}|^2 \, dx_2 \nonumber \\
	&= (d_\xi d_\eta)^{-1} \nonumber .
	\end{align}
	Since $L$ is globally hypoelliptic, by Theorem \ref{fornithm} $L$ is $C^\infty$--cohomology free, then there exists $u \in C^\infty(G)$  such that
	$$
	Lu=f-f_0,
	$$
	where $f_0=\int_{G} f \, d\mu_G$.
	We have
	$$
	\doublehat{Lu}(\xi,\eta)_{m1_{r1}} = i(\lambda_m(\xi))+a\mu_r(\eta))\doublehat{\, u \,}\!(\xi,\eta)_{m1_{r1}} = 0,
	$$
	which implies that
	$$
	\doublehat{{f-f_0}}(\xi,\eta)_{m1_{r1}}= 0. 
	$$
	Since $\xi\otimes\eta$ is not the trivial representation, by \eqref{ortho} we have $\doublehat{{f_0}}(\xi,\eta)_{m1_{r1}}=0$, so 
	$$
	\doublehat{\,f\,}\!(\xi,\eta)_{m1_{r1}}= 0, 
	$$
	what is a contradiction because $\doublehat{\,f\,}\!(\xi,\eta)_{m1_{r1}}=(d_\xi d_\eta)^{-1} $. Therefore $\mathcal{N}$ contains only the trivial representation.
\end{proof}

In view of Example \ref{examples3} and Proposition \ref{GHtrivial}, the following question naturally arises:

\begin{question}\label{question}
	Does there exist a compact Lie group $G \neq \mathbb{T}^d$ such that there exists $X\in \mathfrak{g}$ such that $\sigma_X(\phi)$ is singular for only finitely many $[\phi] \in \widehat{G}$, that is, the set
	$$
	\mathcal{Z} = \{[\phi] \in \widehat{G}; \ \lambda_m(\phi)=0, \mbox{ for some }  1 \leq m \leq d_\phi\}
	$$
	is finite, where $\sigma_X(\phi)_{mn}=i\lambda_m(\phi)\delta_{mn}$?
\end{question}

The Greenfield-Wallach conjecture was only proved in dimensions 2 and 3, and in some very particular cases, which are described by G. Forni in \cite{F08}. We suspect the answer to the above question is a way to prove the conjecture for Lie groups.

In view of the probable validity of the Greenfield-Wallach conjecture, the study of the global hypoellipticity of vector fields defined on closed manifolds is restricted to tori. However, the study of the regularity of solutions of such vector fields is yet an interesting subject. For this reason, in this section we will make some considerations looking to weaken the usual concept of the  global hypoellipticity and introduce what we will call global hypoellipticity modulo kernel and global $\mathcal{W}$-hypoellipticity.

\subsection{Global hypoellipticity modulo kernel}
\mbox{ }

First, assuming that the set $\mathcal{N}$ has infinitely many elements, we will show that to reduce the range of the operator does not help us to obtain a weaker version of global hypoellipticity.

\begin{prop}\label{image}
	Suppose that $\mathcal{N}$ has infinitely many elements. Then there is no subset $\mathcal{A} \subseteq C^{\infty}(G)$ that satisfies the condition: $u \in \DG$ and $Lu \in \mathcal{A}$ imply that $u \in C^\infty(G)$.
\end{prop}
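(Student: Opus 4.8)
The plan is to show that any putative $\mathcal{A}\subseteq C^\infty(G)$ with the stated implication property must in fact be empty, which is absurd since, say, the constant function $0$ lies in every candidate set. The key observation is that Proposition~\ref{lemmainf} produces $u_0\in\DG\setminus C^\infty(G)$ with $Lu_0=0$, and one can then translate the trouble that $u_0$ causes at the origin of the range to \emph{any} prescribed target $g\in C^\infty(G)$ by adding a genuine solution. So first I would take an arbitrary $g\in\mathcal{A}$. By hypothesis $g$ is smooth, and since $L$ restricted away from $\mathcal{N}$ is invertible with polynomial bounds failing only along $\mathcal{N}$, one must be slightly careful: $g$ need not be an admissible distribution. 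However, we are free to modify $g$ by a smooth admissible function without leaving $\mathcal{A}$ only if $\mathcal{A}$ were closed under such modifications, which we cannot assume. So instead the cleaner route is: fix $g\in\mathcal{A}$ and seek $v\in\DG$ with $Lv=g$.

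The difficulty, and the main obstacle, is precisely that a smooth $g\in\mathcal{A}$ need not be hit by $L$ at all — $g$ might fail to be admissible (i.e. $g\notin\mathcal{K}$), or $L$ might not be globally solvable, so even admissible smooth $g$ need not have a solution. To sidestep this, I would argue as follows. Pick any $g\in\mathcal{A}$ and consider $\doublehat{\,g\,}(\xi,\eta)$. Define a distribution $v$ by setting $\doublehat{\,v\,}(\xi,\eta)_{mn_{rs}}:=\big(i(\lambda_m(\xi)+a\mu_r(\eta))\big)^{-1}\doublehat{\,g\,}(\xi,\eta)_{mn_{rs}}$ whenever $\lambda_m(\xi)+a\mu_r(\eta)\neq 0$, and $:=0$ otherwise; and let $\widetilde g:=Lv$. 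Since $g$ is smooth, its Fourier coefficients decay rapidly, so $v$ is a \emph{distribution} (indeed the coefficients of $v$ are at worst a bounded multiple of those of $g$ away from $\mathcal{N}$, because $|\lambda_m(\xi)+a\mu_r(\eta)|$, while possibly small, is bounded below by a positive constant on any finite set and, off $\mathcal{N}$, the values form a set of nonzero reals/complexes; here one uses that rapid decay of $\doublehat{\,g\,}$ beats any fixed power, so even a crude lower bound $|\lambda_m(\xi)+a\mu_r(\eta)|\ge c_{\xi,\eta}>0$ on each representation suffices to keep $v\in\DG$ — this is the standard argument and I would present it compactly). Then $Lv=\widetilde g$ where $\widetilde g$ has the same Fourier coefficients as $g$ except possibly at the finitely... no — at the (possibly infinitely many) pairs in $\mathcal{N}$, where $\doublehat{\,\widetilde g\,}=0$. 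So $\widetilde g=g-h$ where $h$ is supported (in frequency) on $\mathcal{N}$ and has rapidly decaying coefficients, hence $h\in C^\infty(G)$, hence $\widetilde g=g-h\in C^\infty(G)$.

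Now the punchline: using $u_0$ from Proposition~\ref{lemmainf}, set $w:=v+u_0$. Then $w\in\DG$, and $w\notin C^\infty(G)$ because $v$ might be smooth or not, but in either case $w-v=u_0\notin C^\infty(G)$, so $w\notin C^\infty(G)$ unless $v\notin C^\infty(G)$ — and if $v\notin C^\infty(G)$ we already take $w=v$ directly. In all cases we obtain $w\in\DG\setminus C^\infty(G)$ with $Lw=\widetilde g\in C^\infty(G)$; but $\widetilde g=g-h$ with $g\in\mathcal{A}$ and $h$ smooth, so $Lw=g-h$. This is not yet of the form ``$Lw\in\mathcal{A}$'' unless $\mathcal{A}$ is translation-stable under smooth functions. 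The genuinely robust fix: observe we have freedom in choosing $u_0$ — rather than killing all of $\mathcal{N}$, choose $u_0$ supported on an \emph{infinite} subset of $\mathcal{N}$; then adjust $v$ so that $Lv=g$ exactly on the complement and $Lv$ vanishes on $\mathcal{N}$. Replacing $v$ by $v+u_0$ changes $L(v+u_0)=Lv=\widetilde g$ still, so this does not fix the $h$ discrepancy. The correct and simplest resolution: note $g\in\mathcal{A}\subseteq C^\infty(G)$, so its coefficients on $\mathcal{N}$ already decay rapidly; hence $h\in C^\infty(G)$ and more to the point \emph{$g$ itself need not be admissible, but $g-h\in\mathcal{K}$ automatically by construction}. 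Since a smooth $g$ does \emph{not} a priori lie in $\mathcal{A}$ after subtracting $h$, I conclude the theorem not by translation but by: if such $\mathcal{A}$ existed, pick any $g\in\mathcal{A}$; then by the construction above there is $w\in\DG\setminus C^\infty(G)$ with $Lw=g - h$, $h\in C^\infty(G)$; and then $w':=w$ has $Lw'=g-h$; finally since the hypothesis on $\mathcal{A}$ only constrains distributions mapping \emph{into} $\mathcal{A}$, and we can re-run with $g$ replaced by $g+h'$ for suitable smooth $h'$ to land exactly in $\mathcal{A}$ — the honest statement to prove here is that $\mathcal{A}$ cannot contain any element of the image $L(\DG)$ that is smooth, together with the fact that by Proposition~\ref{solvabilitysmooth}-type reasoning $L(\DG)\cap C^\infty(G)$ is dense enough to intersect $\mathcal{A}$; but since a priori $\mathcal{A}$ could be, e.g., a single non-admissible smooth function, the cleanest correct claim is simply: \textbf{for every} $g\in C^\infty(G)$ there exists $w\in\DG\setminus C^\infty(G)$ with $Lw = g$ whenever $g$ is admissible, and this already contradicts the defining property of $\mathcal{A}$ as soon as $\mathcal{A}$ meets $\mathcal{K}$. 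I would therefore structure the final proof as: (i) build the particular solution $v$ with $Lv\in C^\infty(G)$ and $Lv\equiv g$ mod $\mathcal{N}$; (ii) add the kernel element $u_0$ to spoil smoothness; (iii) conclude that $\mathcal{A}$ can contain no smooth function at all — contradiction since $0\in\mathcal{A}$ is forced (or more carefully, since $L\cdot 0=0$ is smooth and $0\in C^\infty(G)$, the element $0$ witnesses nothing, so instead observe $\widetilde g\in\mathcal{A}$ is impossible for the $\widetilde g$ we built, yet $\widetilde g$ ranges over a set large enough to meet any nonempty $\mathcal{A}$ after a smooth correction). The main obstacle is exactly this bookkeeping of admissibility versus membership in $\mathcal{A}$; I expect the author handles it by simply taking $g\in\mathcal{A}$, solving $Lv = g$ directly (this \emph{is} possible for the author's $g$ if one first notes $g\in\mathcal{K}$ is \emph{not} needed because one only needs $Lv$ to agree with $g$ up to a smooth term, and a smooth term added to an element of $\mathcal{A}$ need not stay in $\mathcal{A}$ — so the real content must be that $\mathcal{N}$ infinite forces $g\in\mathcal{A}\Rightarrow$ the equation $Lw=g$ has a non-smooth solution), and then invoking Proposition~\ref{lemmainf} to add a non-smooth kernel element, yielding the required contradiction with the assumed regularity property of $\mathcal{A}$.
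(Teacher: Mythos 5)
There is a genuine gap, and it sits exactly where you yourself flagged it. Your strategy is to take $g\in\mathcal{A}$, manufacture $v\in\DG$ with $Lv=g$ away from the frequencies in $\mathcal{N}$, and then spoil smoothness with a kernel element. Two steps fail. First, the formal solution $v$ defined by $\doublehat{\,v\,}(\xi,\eta)_{mn_{rs}}=\bigl(i(\lambda_m(\xi)+a\mu_r(\eta))\bigr)^{-1}\doublehat{\,g\,}(\xi,\eta)_{mn_{rs}}$ need not be a distribution: without the solvability estimate \eqref{condition1} the quantities $|\lambda_m(\xi)+a\mu_r(\eta)|$ may be nonzero yet decay faster than any negative power of $\jp{\xi}+\jp{\eta}$, in which case $|\doublehat{\,v\,}(\xi,\eta)_{mn_{rs}}|$ grows faster than any polynomial even though $\doublehat{\,g\,}$ decays rapidly; a pointwise lower bound $c_{\xi,\eta}>0$ ``on each representation'' gives no control, because Proposition \ref{smoopartial} requires a single polynomial bound uniform over all $([\xi],[\eta])$. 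The proposition assumes nothing about global solvability, so this step is simply unavailable. Second, even granting $v\in\DG$, you only reach $Lw=g-h$ with $h$ smooth and frequency-supported on $\mathcal{N}$, and $g-h$ has no reason to belong to $\mathcal{A}$ (which could be a single function); so the defining property of $\mathcal{A}$ is never engaged. Your suggested repairs --- that the modified functions are ``dense enough'' to meet $\mathcal{A}$, or that a smooth correction lands back in $\mathcal{A}$ --- do not hold for an arbitrary subset.

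The resolution is to reverse the starting point: do not begin with $g\in\mathcal{A}$ and try to solve $Lv=g$; begin with $u\in\DG$ such that $Lu\in\mathcal{A}$ (if no such $u$ exists the implication is vacuous, which is the degenerate reading the paper implicitly sets aside). The assumed property of $\mathcal{A}$ then forces $u\in C^\infty(G)$. Let $u_0\in\DG\setminus C^\infty(G)$ be the kernel element furnished by Proposition \ref{lemmainf}. Then $L(u+u_0)=Lu\in\mathcal{A}$, so the same property forces $u+u_0\in C^\infty(G)$, whence $u_0=(u+u_0)-u\in C^\infty(G)$, a contradiction. No solvability hypothesis, no admissibility bookkeeping, and no construction of a preimage of $g$ are needed. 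Your closing sentences grope toward this, but as written the proposal never actually produces a $w\in\DG\setminus C^\infty(G)$ with $Lw\in\mathcal{A}$, and that is the entire content of the contradiction.
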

\begin{proof}
	Assume that there exists a subset $\mathcal{A} \subseteq C^{\infty}(G)$ that satisfies the property above. Let $u \in \DG$ such that $Lu \in \mathcal{A}$, then $u \in C^{\infty}(G)$. By Proposition \ref{lemmainf} there exists an element $v \in \ker L$ such that $v \in \DG \backslash C^\infty(G)$.  Since $v \in \ker L$, we have $L(u+v) = Lu \in \mathcal{A}$, which implies that $u+v \in C^{\infty}(G)$. Therefore $v = (u+v)-u \in C^{\infty}(G)$, a contradiction.
\end{proof}
In view of Proposition \ref{image} we give the following definition:

\begin{defi}
	We say that an operator $P:\DG \to \DG$ is globally hypoelliptic modulo $\ker P$ if the conditions $u \in \DG$ and $Pu \in C^\infty(G)$ imply that there exists $v \in C^\infty(G)$ such that $u-v \in \ker P$.
\end{defi}

Clearly, global hypoellipticity implies global hypoellipticity modulo kernel. Our main result here is the equivalence of the concepts of global hypoellipticity modulo kernel and global solvability for constant coefficient vector fields.

\begin{prop}\label{modker}
	The operator $L=X_1+a X_2$ is globally hypoelliptic modulo $\ker L$ if and only if $L$ is globally solvable.
\end{prop}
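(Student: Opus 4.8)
The plan is to prove both implications by transferring the problem to the level of Fourier coefficients, exactly as in the proofs of Theorem \ref{thm1} and Theorem \ref{solvability}, and using the characterization \eqref{fourieru} together with Proposition \ref{smoopartial}. Throughout, write $N_{mr}(\xi,\eta) := \lambda_m(\xi) + a\mu_r(\eta)$ for brevity, and recall that $([\xi],[\eta]) \in \mathcal{N}$ iff $N_{mr}(\xi,\eta)=0$ for some admissible pair of indices.

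For the implication ($\impliedby$), suppose $L$ is globally solvable, so by Theorem \ref{solvability} the estimate \eqref{condition1} holds: there exist $C,M>0$ with $|N_{mr}(\xi,\eta)| \geq C(\jp{\xi}+\jp{\eta})^{-M}$ whenever $N_{mr}(\xi,\eta)\neq 0$. Let $u \in \DG$ with $Lu = f \in C^\infty(G)$. Define $v$ by prescribing its Fourier coefficients: set $\doublehat{v}(\xi,\eta)_{mn_{rs}} = \doublehat{u}(\xi,\eta)_{mn_{rs}}$ when $N_{mr}(\xi,\eta)\neq 0$, and $\doublehat{v}(\xi,\eta)_{mn_{rs}} = 0$ when $N_{mr}(\xi,\eta) = 0$. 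First I would check that $v \in C^\infty(G)$: on the nonvanishing set, \eqref{fourieru} gives $\doublehat{v}(\xi,\eta)_{mn_{rs}} = \bigl(i N_{mr}(\xi,\eta)\bigr)^{-1}\doublehat{f}(\xi,\eta)_{mn_{rs}}$, and combining \eqref{condition1} with the rapid decay of $\doublehat{f}$ (Proposition \ref{smoopartial}) yields, for every $N$, a bound $C_{N+M}(\jp{\xi}+\jp{\eta})^{-N}$, exactly as in the proof of Proposition \ref{solvabilitysmooth}; on the vanishing set the coefficients are $0$. Hence $v \in C^\infty(G)$ by Proposition \ref{smoopartial}. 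It remains to see $u - v \in \ker L$: the coefficients of $u-v$ vanish off $\mathcal{N}$ by construction, and applying $L$ multiplies the $(mn_{rs})$-coefficient by $iN_{mr}(\xi,\eta)$, which is zero precisely where $u-v$ is supported in frequency; so $\doublehat{L(u-v)} \equiv 0$ and $L(u-v)=0$ by Plancherel \eqref{plancherel}. This gives global hypoellipticity modulo $\ker L$.

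For the implication ($\implies$), suppose $L$ is globally hypoelliptic modulo $\ker L$ but, for contradiction, not globally solvable. By Theorem \ref{solvability} the estimate \eqref{condition1} fails, so for each $M\in\N$ there are $[\xi_M]\in\widehat{G_1}$, $[\eta_M]\in\widehat{G_2}$ and admissible indices $\tilde m,\tilde r$ with $0 < |N_{\tilde m\tilde r}(\xi_M,\eta_M)| < (\jp{\xi_M}+\jp{\eta_M})^{-M}$; arranging $\jp{\xi_M}+\jp{\eta_M}$ to be increasing, $\mathcal{A}:=\{([\xi_j],[\eta_j])\}_{j}$ is infinite. Now I would build $u\in\DG$ whose coefficient at $([\xi_M],[\eta_M])$ in the slot $(\tilde m 1_{\tilde r 1})$ equals $(\jp{\xi_M}+\jp{\eta_M})^{M}$ and which is $0$ elsewhere; the polynomial-in-$M$ versus the $(\jp{\xi_M}+\jp{\eta_M})^{M}$ growth shows $u$ is a genuine distribution but not smooth. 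Then $f := Lu$ has coefficients $iN_{\tilde m\tilde r}(\xi_M,\eta_M)(\jp{\xi_M}+\jp{\eta_M})^{M}$, whose absolute values are $< 1$ by the choice above, and which vanish off $\mathcal{A}$; hence $f\in C^\infty(G)$ by Proposition \ref{smoopartial}. By the modulo-kernel hypothesis there is $v\in C^\infty(G)$ with $u - v \in \ker L$. But then $\doublehat{L(u-v)}\equiv 0$ forces $N_{\tilde m\tilde r}(\xi_M,\eta_M)\,\doublehat{(u-v)}(\xi_M,\eta_M)_{\tilde m 1_{\tilde r 1}} = 0$, and since $N_{\tilde m\tilde r}(\xi_M,\eta_M)\neq 0$ we get $\doublehat{v}(\xi_M,\eta_M)_{\tilde m 1_{\tilde r 1}} = \doublehat{u}(\xi_M,\eta_M)_{\tilde m 1_{\tilde r 1}} = (\jp{\xi_M}+\jp{\eta_M})^{M}$ for all $M$, contradicting $v\in C^\infty(G)$ via Proposition \ref{smoopartial}.

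The routine parts — the decay/growth bookkeeping and invoking Proposition \ref{smoopartial} — mirror arguments already given, so the only genuinely delicate point is organizing the forward direction so that the test distribution $u$ is supported on a single matrix-coefficient slot $(\tilde m 1_{\tilde r 1})$ per frequency: one must be slightly careful that prescribing coefficients only on such a thin set still defines an element of $\DG$ (it does, the growth bound is unaffected) and that $\ker L$ being nontrivial on exactly the frequencies where $N_{mr}=0$ is used to conclude $\doublehat v = \doublehat u$ on the offending frequencies, where $N_{mr}\neq 0$. That is the crux: the kernel cannot "absorb" a coefficient sitting at a frequency where the symbol is nonzero, which is what produces the contradiction.
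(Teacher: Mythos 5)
Your backward implication is correct and is essentially the paper's argument: the $v$ you build coefficientwise (keeping $\doublehat{\, u \,}\!$ where the symbol is nonzero, zeroing it where the symbol vanishes) is exactly the smooth solution of $Lv=f$ produced by Proposition \ref{solvabilitysmooth} via \eqref{solution1}, and $u-v$ is then frequency-supported where $\lambda_m(\xi)+a\mu_r(\eta)=0$, hence lies in $\ker L$.

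The forward implication, however, contains a genuine error in the construction of the test distribution. You set $\doublehat{\, u \,}\!(\xi_M,\eta_M)_{\tilde{m}1_{\tilde{r}1}} = (\jp{\xi_M}+\jp{\eta_M})^{M}$. Since $\jp{\xi_M}+\jp{\eta_M}\geq 2$ and necessarily $\jp{\xi_M}+\jp{\eta_M}\to\infty$ along the sequence (otherwise only finitely many representations occur and $|\lambda_{\tilde{m}}(\xi_M)+a\mu_{\tilde{r}}(\eta_M)|$ could not tend to $0$ while staying nonzero), these coefficients grow faster than any fixed power of $\jp{\xi_M}+\jp{\eta_M}$, so by Proposition \ref{smoopartial} your $u$ is \emph{not} an element of $\DG$; indeed, the nonexistence of such a $u$ is precisely what the forward direction of Theorem \ref{solvability} exploits. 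A second, related problem: you conclude $f=Lu\in C^\infty(G)$ from the bound $|\doublehat{\,f\,}\!(\xi_M,\eta_M)_{\tilde{m}1_{\tilde{r}1}}|<1$, but bounded Fourier coefficients give only a distribution; smoothness requires decay faster than every negative power of $\jp{\xi}+\jp{\eta}$, which your choice does not provide. Both problems disappear if you instead take $\doublehat{\, u \,}\!(\xi_M,\eta_M)_{\tilde{m}1_{\tilde{r}1}} = 1$, as in the proofs of Theorem \ref{thm1} and of this proposition in the paper: then $u\in\DG\setminus C^\infty(G)$ because its coefficients are bounded but do not decay, and $f=Lu$ has coefficients bounded by $(\jp{\xi_M}+\jp{\eta_M})^{-M}$, which (after the usual splitting into the cases $M>N$ and $M\leq N$) decay rapidly, so $f\in C^\infty(G)$. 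Your final step --- that $u-v\in\ker L$ together with $\lambda_{\tilde{m}}(\xi_M)+a\mu_{\tilde{r}}(\eta_M)\neq 0$ forces $\doublehat{\,v\,}\!=\doublehat{\, u \,}\!$ at those slots, so $v$ cannot be smooth --- is the right idea and goes through verbatim once the coefficients are corrected.
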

\begin{proof}
	$(\implies)$ Suppose that $L$ is not globally solvable. Then by Theorem \ref{solvability}, for every $M \in \N$, choose $[\xi_M] \in  \widehat{G_1}$ and $[\eta_M] \in \widehat{G_2}$ such that
	\begin{equation*}
	0<|\lambda_m(\xi_M)+a\mu_r(\eta_M)|\leq (\langle \xi_M \rangle +\langle \eta_M \rangle) ^{-M},
	\end{equation*}
	for some $1 \leq m \leq d_{\xi_M}$ and $1 \leq r \leq d_{\eta_M}.$ Using the same construction of the proof of Theorem \ref{thm1}, we find a $u \in \DG\setminus C^\infty(G)$ such that $Lu=f\in C^\infty(G)$. Notice that if $u-v \in \ker L$, for some $v \in C^\infty(G)$, then
	$$
	i(\lambda_m(\xi)+a \mu_r(\eta))\doublehat{u-v}(\xi,\eta)_{mn_{rs}} = 0,
	$$
	for all   $[\xi] \in \widehat{G_1}, \ [\eta] \in \widehat{G_2},\  1 \leq m,n \leq d_\xi,\  1 \leq r,s \leq d_\eta$, which implies that
	$$
	\lambda_m(\xi)+a \mu_r(\eta) \neq 0 \implies \doublehat{u}(\xi,\eta)_{mn_{rs}} = \doublehat{v}(\xi,\eta)_{mn_{rs}}.
	$$
	Since $\doublehat{u}(\xi_M,\eta_M)_{mn_{rs}}=1$, we conclude that $v \notin C^\infty(G)$, so $L$ is not globally hypoelliptic modulo $\ker L$.
	
	$(\impliedby)$ Let $u\in \DG$ such that $Lu=f \in C^\infty(G)$. Notice that $f \in \mathcal{K}\cap C^\infty(G)$ and by Proposition \ref{solvabilitysmooth} there exists $v \in C^\infty(G)$ such that $Lv=f$. Therefore $u-v \in \ker L$ and then $L$ is globally hypoelliptic modulo $\ker L$.
\end{proof}

\begin{ex}
	Let $G=\TS$. In Example \ref{examples3} we saw that the operator $L=\partial_t + X$ is not globally hypoelliptic but it is globally solvable. By Proposition \ref{modker}, we conclude that  even not being globally hypoelliptic, the operator $L$ is globally hypoelliptic modulo kernel.
\end{ex}
\subsection{$\mathcal{W}$--global hypoellipticity}

In the light of Proposition \ref{image}, our next notion of hypoellipticity is based on the reduction of the domain of the operator.

\begin{defi}
	Let $\mathcal{W}$ be a subset of $\DG$. We say that an operator $P:\DG \to \DG$ is $\mathcal{W}$-globally hypoelliptic if the conditions $u \in \mathcal{W}$ and $Pu \in C^\infty(G)$ imply that $u \in C^\infty(G)$.
\end{defi}

Observe that an operator $P$ is always $C^{\infty}(G)$--globally hypoelliptic, and to say that $P$ is $\DG$-globally hypoelliptic means that $P$ is globally hypoelliptic.

\begin{ex}\label{khypo}
	Let $L=X_1 + a X_2$ and set $$\mathcal{K}:=\{u \in \DG; \ \doublehat{\, u \,}\!(\xi,\eta)_{mn_{rs}}=0, \textrm{whenever } \lambda_m(\xi)+a\mu_r(\eta)=0 \}.$$ If $L$ is globally solvable, then $L$ is $\mathcal{K}$-globally hypoelliptic.
	
	Indeed, by the characterization of the global solvability (Theorem \ref{solvability}), there exist $C, \, M>0$ such that
	\begin{equation*}
	|\lambda_m(\xi)+a\mu_r(\eta)|\geq C (\langle \xi\rangle +\langle \eta \rangle )^{-M}, 
	\end{equation*}
	for all  $[\xi] \in \widehat{G_1}, \ [\eta] \in \widehat{G_2}, \ 1 \leq m \leq d_\xi, \  1 \leq r \leq d_\eta,$ whenever $\lambda_m(\xi)+a\mu_r(\eta) \neq 0$.
	
	Let $u \in \mathcal{K}$ such that $Lu=f \in C^\infty(G)$. We know that
	$$
	\doublehat{\,f\,}\!(\xi,\eta)_{mn_{rs}}=i(\lambda_m(\xi)+a \mu_r(\eta))\doublehat{\, u \,}\!(\xi,\eta)_{mn_{rs}}.
	$$
	
	If $\lambda_m(\xi)+a\mu_r(\eta)=0$ then $\doublehat{\, u \,}\!(\xi,\eta)_{mn_{rs}}=0$. 
	
	If $\lambda_m(\xi)+a\mu_r(\eta) \neq 0$, we have
	$$
	|\doublehat{\, u \,}\!(\xi,\eta)_{mn_{rs}}| = \frac{1}{|\lambda_m(\xi)+a\mu_r(\eta)|} |	\doublehat{\,f\,}\!(\xi,\eta)_{mn_{rs}}| \leq C (\langle \xi\rangle +\langle \eta \rangle )^{M}|\doublehat{\,f\,}\!(\xi,\eta)_{mn_{rs}} |.
	$$
	Therefore $u \in C^\infty(G)$.
\end{ex}

\begin{prop}\label{inclusion}
	If $\mathcal{W}_1 \subseteq \mathcal{W}_2$ and $L$ is $\mathcal{W}_2$--globally hypoelliptic, then $L$ is $\mathcal{W}_1$--globally hypo\-elliptic.
\end{prop}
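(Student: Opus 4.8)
The plan is to observe that this is an immediate consequence of the definition of $\mathcal{W}$--global hypoellipticity together with the hypothesis $\mathcal{W}_1 \subseteq \mathcal{W}_2$. First I would take an arbitrary $u \in \mathcal{W}_1$ satisfying $Lu \in C^\infty(G)$; the goal is to show that $u \in C^\infty(G)$. Since $\mathcal{W}_1 \subseteq \mathcal{W}_2$, the distribution $u$ also belongs to $\mathcal{W}_2$, and it of course still satisfies $Lu \in C^\infty(G)$. Applying the assumed $\mathcal{W}_2$--global hypoellipticity of $L$ to this $u$ then yields $u \in C^\infty(G)$, which is exactly what had to be proved. As $u$ was an arbitrary element of $\mathcal{W}_1$ with $Lu$ smooth, the operator $L$ is $\mathcal{W}_1$--globally hypoelliptic.

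There is essentially no obstacle here: the statement is a pure monotonicity property of the notion of being $\mathcal{W}$--globally hypoelliptic with respect to inclusion of the class $\mathcal{W}$, and the argument uses nothing about the specific structure of $L=X_1+aX_2$ or of $G$. The only point worth recording is that the same reasoning applies verbatim to any operator $P:\DG\to\DG$, not just to $L$, and that it places the notions of this subsection on a single scale indexed by $\mathcal{W}$: combining it with the remarks that $P$ is always $C^\infty(G)$--globally hypoelliptic and that $\DG$--global hypoellipticity coincides with ordinary global hypoellipticity, one sees that all these properties interpolate consistently between these two extreme choices of $\mathcal{W}$.
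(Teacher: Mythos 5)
Your proof is correct and is essentially identical to the one in the paper: take $u\in\mathcal{W}_1$ with $Lu\in C^\infty(G)$, note $u\in\mathcal{W}_2$ by the inclusion, and apply the $\mathcal{W}_2$--global hypoellipticity. Nothing further is needed.
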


\begin{proof} Let $u \in \mathcal{W}_1$ such that $Lu \in C^{\infty}(G)$. As $\mathcal{W}_1 \subseteq \mathcal{W}_2$, we have $u \in\mathcal{W}_2$ and since $L$ is $\mathcal{W}_2$--globally hypoelliptic, $u \in C^{\infty}(G)$. Therefore $L$ is $\mathcal{W}_1$--globally hypoelliptic. 
\end{proof}

\begin{cor}
	If $L$ is globally solvable, then $L$ is $L(\DG)$--globally hypoelliptic, where $L(\DG)$ denotes the image of $L$.
\end{cor}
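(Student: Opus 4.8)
The plan is to derive this corollary directly from Example~\ref{khypo} and Proposition~\ref{inclusion}, with essentially no extra work. First I would record the inclusion $L(\DG)\subseteq\mathcal{K}$, which holds with no hypothesis on $L$: if $u\in\DG$ and $Lu=f$, then passing to partial Fourier coefficients exactly as in Section~3 gives $\doublehat{\,f\,}\!(\xi,\eta)_{mn_{rs}}=i(\lambda_m(\xi)+a\mu_r(\eta))\,\doublehat{\,u\,}\!(\xi,\eta)_{mn_{rs}}$ for all relevant indices, whence $\doublehat{\,f\,}\!(\xi,\eta)_{mn_{rs}}=0$ whenever $\lambda_m(\xi)+a\mu_r(\eta)=0$; this is precisely \eqref{image0}, so $f\in\mathcal{K}$. (This is the observation already made just before the definition of global solvability.)

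Next, assuming $L$ is globally solvable, Example~\ref{khypo} shows that $L$ is $\mathcal{K}$--globally hypoelliptic. Taking $\mathcal{W}_1=L(\DG)$ and $\mathcal{W}_2=\mathcal{K}$ in Proposition~\ref{inclusion}, and using the inclusion from the previous step, I conclude that $L$ is $L(\DG)$--globally hypoelliptic, which is the assertion. Equivalently, one can note that global solvability of $L$ means by definition $L(\DG)=\mathcal{K}$, so $L(\DG)$--global hypoellipticity and $\mathcal{K}$--global hypoellipticity are literally the same statement, and the corollary is just a restatement of Example~\ref{khypo}.

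I do not expect any genuine obstacle here: the analytic content is entirely contained in Example~\ref{khypo}, and the only point requiring a moment's care is to make explicit that the set $L(\DG)$ appearing in the conclusion is contained in (in fact, under the solvability hypothesis, equal to) the set $\mathcal{K}$ for which the $\mathcal{W}$--global hypoellipticity was already established, so that Proposition~\ref{inclusion} applies.
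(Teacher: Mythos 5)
Your proposal is correct and follows essentially the same route as the paper: establish the inclusion $L(\DG)\subseteq\mathcal{K}$ via \eqref{image0}, invoke the $\mathcal{K}$--global hypoellipticity from Example~\ref{khypo}, and conclude by Proposition~\ref{inclusion}. Your closing observation that under global solvability $L(\DG)=\mathcal{K}$, so the corollary is a restatement of Example~\ref{khypo}, is also valid.
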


\begin{proof}
If $f \in L(\DG)$, then there exists $u \in \DG$ such that $Lu=f$, which implies that $f \in \mathcal{K}$, so $L(\DG) \subset \mathcal{K}.$ Since $L$ is $\mathcal{K}$--globally hypoelliptic (Example \ref{khypo}), by Proposition \ref{inclusion} we conclude that $L$ is $L(\DG)-$hypoelliptic.
\end{proof}

\begin{cor}
	Suppose that $L$ is globally solvable. If there exists $k \in \N$ such that $L^ku \in C^{\infty}(G)$, then $Lu \in C^{\infty}(G)$.
\end{cor}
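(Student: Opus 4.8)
The plan is to argue by induction on $k$, reducing to the case $k=2$. The case $k=2$ is the heart of the matter: assuming $L^2 u \in C^\infty(G)$, I want to conclude $Lu \in C^\infty(G)$. Set $g := L^2 u = L(Lu)$. Since $Lu \in \DG$ and $L(Lu) = g \in C^\infty(G)$, and since $g \in L(\DG)$ forces $g \in \mathcal{K}$, we may apply Proposition \ref{solvabilitysmooth}: because $L$ is globally solvable and $g \in \mathcal{K} \cap C^\infty(G)$, there exists $w \in C^\infty(G)$ with $Lw = g$. Then $L(Lu - w) = 0$, so $Lu - w \in \ker L$. This does not immediately give smoothness of $Lu$, but it does say $Lu = w + h$ with $w \in C^\infty(G)$ and $h \in \ker L \subseteq \DG$. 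Now I apply the result of the preceding corollary: $L$ is $L(\DG)$--globally hypoelliptic. Since $h = Lu - w$ and $Lu \in L(\DG)$, one checks $h \in L(\DG)$ as well (indeed $h = Lu - w$, and $w = L w'$ for some smooth $w'$ by global solvability since $w \in \mathcal{K}\cap C^\infty$, so $h = L(u - w')$); moreover $Lh = 0 \in C^\infty(G)$, hence $h \in C^\infty(G)$. Therefore $Lu = w + h \in C^\infty(G)$.

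For the inductive step, suppose the claim holds for $k-1$ and assume $L^k u \in C^\infty(G)$. Write $L^k u = L^{k-1}(Lu)$; applying the induction hypothesis to $Lu \in \DG$ gives $L(Lu) = L^2 u \in C^\infty(G)$, and then the case $k=2$ just established yields $Lu \in C^\infty(G)$. Alternatively, and perhaps more cleanly, one can avoid induction entirely by iterating the $k=2$ argument directly on the chain $L^k u, L^{k-1} u, \dots$: at each stage $L(L^j u) = L^{j+1} u$, so if $L^{j+1} u \in C^\infty(G)$ then, arguing exactly as above with $g = L^{j+1} u$, we get $L^j u = w_j + h_j$ with $w_j$ smooth and $h_j \in \ker L \cap L(\DG)$, hence $h_j \in C^\infty(G)$ by $L(\DG)$--global hypoellipticity, so $L^j u \in C^\infty(G)$. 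Starting from $j = k-1$ and descending to $j = 1$ produces $Lu \in C^\infty(G)$.

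The main technical point to be careful about is the verification that the $\ker L$ component $h$ actually lies in $L(\DG)$, so that the $L(\DG)$--global hypoellipticity corollary applies; this is where one uses that a smooth admissible function has a smooth primitive (Proposition \ref{solvabilitysmooth}) to write $w = Lw'$ and hence $h = L(u - w')$. Everything else is bookkeeping: the membership $g \in \mathcal{K}$ whenever $g \in L(\DG)$ is immediate from the definition of $\mathcal{K}$, and the descent terminates after finitely many steps. No estimates on the symbol are needed beyond those already packaged into global solvability.
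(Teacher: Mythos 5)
Your argument reaches the right conclusion and uses the same key ingredient as the paper (the preceding corollary on $L(\DG)$--global hypoellipticity), but you take a detour that both lengthens the proof and introduces an unjustified step. The paper's proof is a one-liner applied iteratively: for $j=k-1,\dots,1$, the distribution $v=L^{j}u$ lies in $L(\DG)$ by definition and satisfies $Lv=L^{j+1}u\in C^\infty(G)$, so the $L(\DG)$--global hypoellipticity gives $v\in C^\infty(G)$ directly. You instead decompose $Lu=w+h$ with $w$ a smooth particular solution of $Lw=L^2u$ and $h\in\ker L$, and then apply $L(\DG)$--global hypoellipticity to $h$ rather than to $Lu$ itself. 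This is strictly more work, and it forces you to verify $h\in L(\DG)$, which is where the weak point sits.

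Concretely, you assert that $w\in\mathcal{K}\cap C^\infty(G)$ in order to write $w=Lw'$ and conclude $h=L(u-w')\in L(\DG)$. But an arbitrary smooth solution of $Lw=g$ need not belong to $\mathcal{K}$: one may add to it any smooth element of $\ker L$, and such elements have Fourier coefficients supported exactly on the frequencies where $\lambda_m(\xi)+a\mu_r(\eta)=0$, i.e.\ they are generally \emph{not} in $\mathcal{K}$. The claim is repairable --- the particular solution constructed in \eqref{solution1} in the proof of Proposition \ref{solvabilitysmooth} is defined to vanish on those frequencies and hence does lie in $\mathcal{K}$ --- but as written the step is not justified by the statement of Proposition \ref{solvabilitysmooth}, only by its proof. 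The cleanest fix is to drop the decomposition entirely and apply the $L(\DG)$--global hypoellipticity to $Lu$ (and its higher iterates) directly, as the paper does.
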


\begin{proof}
Suppose that there exists $k \in \N$ such that $L^ku \in C^{\infty}(G)$. Since $v=L^{k-1}u \in L(\DG)$ and $Lv \in C^{\infty}(G)$, we have, by the $L(\DG)$--global hypoellipticity of $L$, that $v \in L^{k-1}u \in C^{\infty}(G)$. We can continue this process to conclude that $Lu \in C^{\infty}(G)$.
\end{proof}

If $L$ is globally solvable, the previous corollary says that if $Lu \notin C^{\infty}(G)$, then $L^ku \notin C^{\infty}(G)$ for all $k \in \N$.

Let 
\begin{equation*}
\mathcal{M}:=\{u \in \DG; \forall N \!\in\! \N, \exists C_N>0;  \|\doublehat{\, u \,}\!(\xi,\eta)\|_{\HS} \leq C_N (\langle \xi \rangle + \langle \eta \rangle)^{-N}\!, ([\xi], [\eta])\! \in\! \mathcal{N} \}.
\end{equation*}
Notice that $C^{\infty}(G) \subsetneq \mathcal{M}$.
\begin{thm}
	If $L$ is globally solvable, then $L$ is $\mathcal{M}$--globally hypoelliptic. 
\end{thm}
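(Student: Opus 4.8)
The plan is to take $u\in\mathcal{M}$ with $Lu=f\in C^\infty(G)$ and show directly that the Fourier coefficients $\doublehat{\,u\,}\!(\xi,\eta)_{mn_{rs}}$ decay faster than any power of $\jp{\xi}+\jp{\eta}$, so that $u\in C^\infty(G)$ follows from Proposition~\ref{smoopartial}. As in the proof of Theorem~\ref{thm1}, I would first record that the identity
$$
\doublehat{Lu}(\xi,\eta)_{mn_{rs}}=i(\lambda_m(\xi)+a\mu_r(\eta))\,\doublehat{\,u\,}\!(\xi,\eta)_{mn_{rs}}
$$
remains valid for $u\in\DG$, so that $\doublehat{\,f\,}\!(\xi,\eta)_{mn_{rs}}=i(\lambda_m(\xi)+a\mu_r(\eta))\,\doublehat{\,u\,}\!(\xi,\eta)_{mn_{rs}}$ for every pair of representations and all indices $m,n,r,s$.

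Next I would split the estimate according to whether $([\xi],[\eta])\in\mathcal{N}$ or not. If $([\xi],[\eta])\in\mathcal{N}$, the hypothesis $u\in\mathcal{M}$ gives, for each $N$, a constant $C_N>0$ with $\|\doublehat{\,u\,}\!(\xi,\eta)\|_{\HS}\leq C_N(\jp{\xi}+\jp{\eta})^{-N}$; since $|\doublehat{\,u\,}\!(\xi,\eta)_{mn_{rs}}|\leq\|\doublehat{\,u\,}\!(\xi,\eta)\|_{\HS}$, this already yields the required rapid decay on $\mathcal{N}$. If $([\xi],[\eta])\notin\mathcal{N}$, then $\lambda_m(\xi)+a\mu_r(\eta)\neq0$ for \emph{all} admissible $m,r$, so $\doublehat{\,u\,}\!(\xi,\eta)_{mn_{rs}}=\bigl(i(\lambda_m(\xi)+a\mu_r(\eta))\bigr)^{-1}\doublehat{\,f\,}\!(\xi,\eta)_{mn_{rs}}$, and I would invoke global solvability: by Theorem~\ref{solvability} there are $C,M>0$ with $|\lambda_m(\xi)+a\mu_r(\eta)|\geq C(\jp{\xi}+\jp{\eta})^{-M}$ whenever the left-hand side is nonzero. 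Hence $|\doublehat{\,u\,}\!(\xi,\eta)_{mn_{rs}}|\leq C^{-1}(\jp{\xi}+\jp{\eta})^{M}|\doublehat{\,f\,}\!(\xi,\eta)_{mn_{rs}}|$, and bounding $|\doublehat{\,f\,}\!(\xi,\eta)_{mn_{rs}}|\leq C_{N+M}(\jp{\xi}+\jp{\eta})^{-(N+M)}$ via the smoothness of $f$ (Proposition~\ref{smoopartial}) produces a bound of order $(\jp{\xi}+\jp{\eta})^{-N}$ off $\mathcal{N}$ as well.

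Taking the maximum of the two constants, for each $N$ one obtains $|\doublehat{\,u\,}\!(\xi,\eta)_{mn_{rs}}|\leq C_N'(\jp{\xi}+\jp{\eta})^{-N}$ for all $[\xi]\in\widehat{G_1}$, $[\eta]\in\widehat{G_2}$ and all indices, so Proposition~\ref{smoopartial} gives $u\in C^\infty(G)$, i.e.\ $L$ is $\mathcal{M}$-globally hypoelliptic. I do not expect a serious obstacle: the argument is simply the ``outside $\mathcal{N}$'' half of the proof of Theorem~\ref{thm1} combined with the definition of $\mathcal{M}$, which is designed precisely to supply the control on $\mathcal{N}$ that is otherwise missing. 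The only point needing a little care is to note that $([\xi],[\eta])\notin\mathcal{N}$ forces \emph{every} relevant $\lambda_m(\xi)+a\mu_r(\eta)$ to be nonzero, so that the solvability estimate applies to all matrix entries of the block $\doublehat{\,u\,}\!(\xi,\eta)$, not merely to some of them.
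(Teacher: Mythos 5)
Your argument is correct and is essentially the paper's own proof: use the definition of $\mathcal{M}$ to handle the pairs in $\mathcal{N}$, and the solvability estimate from Theorem~\ref{solvability} together with the smoothness of $f$ to get rapid decay of $\doublehat{\,u\,}\!(\xi,\eta)_{mn_{rs}}$ off $\mathcal{N}$, then conclude via Proposition~\ref{smoopartial}. Your remark that $([\xi],[\eta])\notin\mathcal{N}$ forces \emph{every} $\lambda_m(\xi)+a\mu_r(\eta)$ to be nonzero is exactly the point that makes the division into the two cases work.
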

\begin{proof}
Let $u \in \mathcal{M}$ such that $Lu \in C^{\infty}(G)$. We know that
$$
\doublehat{Lu}(\xi,\eta)_{mn_{rs}}=i(\lambda_m(\xi)+a \mu_r(\eta))\doublehat{\, u \,}\!(\xi,\eta)_{mn_{rs}},
$$
for all  $[\xi] \in \widehat{G_1}, \ [\eta] \in \widehat{G_2},  1 \leq m \leq d_\xi,  1 \leq r \leq d_\eta.$ If $([\xi],[\eta]) \notin \mathcal{N}$, then $\lambda_m(\xi)+a \mu_r(\eta)\neq0$ and
\begin{equation*}
\doublehat{\, u \,}\!(\xi,\eta)_{mn_{rs}} = \dfrac{1}{i(\lambda_m(\xi)+a\mu_{r}(\eta))} \doublehat{Lu}(\xi,\eta)_{mn_{rs}}.
\end{equation*}
Proceeding similarly as in the proof of Theorem \ref{thm1}, it can be proved that for every $N \in \N$, there exists $C'_N>0$ such that
$$\|\doublehat{\, u \,}\!(\xi,\eta)\|_{\HS} \leq C'_N(\langle \xi \rangle + \langle \eta \rangle)^{-N},
$$
for all $([\xi],[\eta]) \notin \mathcal{N}$. Since $u \in \mathcal{M}$, we can conclude that for every $N \in \N$, there exists $K_N>0$ such that
$$\|\doublehat{\, u \,}\!(\xi,\eta)\|_{\HS} \leq K_N(\langle \xi \rangle + \langle \eta \rangle)^{-N},
$$
for all $([\xi],[\eta]) \in \widehat{G_1} \times \widehat{G_2}$. Therefore $u \in C^{\infty}(G)$.
\end{proof}
\section{Low order perturbations}

In view of the Greenfield-Wallach conjecture, a way to obtain e\-xam\-ples of globally hypoelliptic first order differential operators defined on compact Lie groups other than the torus is to consider perturbations of vector fields by low order terms.

We start by considering the case where $q$ is a constant, next we will consider perturbations by functions $q \in C^\infty(G)$. This approach was inspired by the reference \cite{Ber94} of A. Bergamasco. In both situations, perturbations by constant and functions, we characterize the global hypoellipticity and the global solvability.

\subsection{Perturbations by constants}\label{addconstant}

Let $G$ be a compact Lie group, $X \in \mathfrak{g}$ and $q \in \C$. Define the operator \newline ${L_q:C^\infty(G) \to C^\infty(G)}$ as
$$
L_q u:= Xu + q u, \quad   u \in C^\infty(G).
$$
We can extend $L_q$ to $\DG$ as
\begin{equation}\label{extend}
\jp{L_q u, \varphi} := -\jp{u, X\varphi} + \jp{u,q \varphi} = - \jp{u,L_{-q} \varphi}, \quad  u \in \DG,\ \varphi \in C^\infty(G).
\end{equation}
If $L_{q}u=f \in C^\infty(G)$, the Fourier coefficient of $f$ can be obtained as 
$$
\widehat{f}(\xi) = \widehat{L_q u}(\xi) = \widehat{Xu}(\xi) + \widehat{q u}(\xi) = \sigma_X(\xi) \widehat{u}(\xi) + q \widehat{u}(\xi),
$$
for all $[\xi] \in \widehat{G}$. So
$$
\widehat{f}(\xi)_{mn}=i\lambda_m(\xi)\widehat{u}(\xi)_{mn} + q \widehat{u}(\xi)_{mn} = i(\lambda_m(\xi) - iq ) \widehat{u}(\xi)_{mn},
$$
for all $[\xi]\in \widehat{G}$, $1 \leq m,n \leq d_\xi$. 

From this we conclude that 
$$
\widehat{f}(\xi)_{mn} = 0, \mbox{ whenever } \lambda_m(\xi) - iq = 0.
$$

In addition, if $\lambda_m(\xi) - iq \neq 0$, then
$$
\widehat{u}(\xi)_{mn} = \frac{1}{i(\lambda_m(\xi) - iq )}\widehat{f}(\xi)_{mn}.
$$

Thus, we obtain the following characterization for the global hypoellipticity and solvability of $L_q$ which is similar to the vector fields case and so its proof will be omitted.
\begin{thm}\label{thmper}
	The operator $L_q=X+q$ is globally hypoelliptic if and only if the following conditions are satisfied: 
	\begin{enumerate}[1.]
		\item The set
		$$
		\mathcal{N}=\{[\xi] \in  \widehat{G} ; \ \lambda_m(\xi)-iq = 0  \mbox{ for some }  1 \leq m \leq d_\xi \}
		$$
		is finite.
		\item $\exists C, \, M>0$ such that
		\begin{equation}\label{hypothesisper}
		|\lambda_m(\xi)-iq|\geq C\jp{\xi}^{-M}, 
		\end{equation}
		for all  $[\xi] \in \widehat{G}, \ 1 \leq m \leq d_\xi$ whenever $\lambda_m(\xi)+iq \neq 0$.
	\end{enumerate}
\end{thm}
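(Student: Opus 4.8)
The plan is to transcribe, almost word for word, the proofs of Proposition~\ref{lemmainf} and Theorem~\ref{thm1}, replacing the product symbol $\lambda_m(\xi)+a\mu_r(\eta)$ by the (in general complex) number $\lambda_m(\xi)-iq$ and using the single-group characterizations of $C^\infty(G)$ and $\DG$ from Proposition~\ref{smoo} in place of Proposition~\ref{smoopartial}. The only identity needed is
$$
\widehat{L_q u}(\xi)_{mn}=i\big(\lambda_m(\xi)-iq\big)\widehat{u}(\xi)_{mn},\qquad [\xi]\in\widehat{G},\ 1\leq m,n\leq d_\xi,
$$
which holds for $u\in C^\infty(G)$ by the computation preceding the statement and, since $L_q$ is left-invariant, extends to all $u\in\DG$.

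For the sufficiency direction I would take $u\in\DG$ with $L_qu=f\in C^\infty(G)$ and estimate $\widehat{u}(\xi)_{mn}$ in two regimes. On the finite set $\mathcal{N}$ (Condition~1), the distributional bound $|\widehat{u}(\xi)_{mn}|\leq C\jp{\xi}^{N_0}$ can be upgraded, by finiteness, to $|\widehat{u}(\xi)_{mn}|\leq C_N\jp{\xi}^{-N}$ for every $N$. Off $\mathcal{N}$ we have $\lambda_m(\xi)-iq\neq0$, hence $\widehat{u}(\xi)_{mn}=\big(i(\lambda_m(\xi)-iq)\big)^{-1}\widehat{f}(\xi)_{mn}$, and combining the lower bound \eqref{hypothesisper} from Condition~2 with the rapid decay of $\widehat{f}$ gives $|\widehat{u}(\xi)_{mn}|\leq C^{-1}C_{N+M}\jp{\xi}^{-N}$. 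Taking the maximum of the two bounds and invoking Proposition~\ref{smoo} once more yields $u\in C^\infty(G)$.

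For the necessity direction I would argue by contraposition, just as in Theorem~\ref{thm1}. If Condition~1 fails, the sequence equal to $1$ precisely when $\lambda_m(\xi)-iq=0$ and $0$ otherwise defines, by Proposition~\ref{smoo}, a distribution $u\in\DG\setminus C^\infty(G)$ with $L_qu=0$ (the single-group analogue of Proposition~\ref{lemmainf}), contradicting global hypoellipticity. If Condition~2 fails, for each $M\in\N$ I would choose $[\xi_M]\in\widehat{G}$ and an index $m$ with $0<|\lambda_m(\xi_M)-iq|\leq\jp{\xi_M}^{-M}$; since the nonzero values of $|\lambda_m(\cdot)-iq|$ over any fixed representation are bounded away from zero, the chosen $[\xi_M]$ run through infinitely many elements of $\widehat{G}$. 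Putting $\widehat{u}(\xi)_{mn}=1$ on these entries and $0$ elsewhere gives $u\in\DG\setminus C^\infty(G)$ with $f:=L_qu$ satisfying $|\widehat{f}(\xi_M)_{mn}|=|\lambda_m(\xi_M)-iq|\leq\jp{\xi_M}^{-M}$ and vanishing off these entries; the usual splitting into the representations with $M>N$ and the finitely many with $M\leq N$ shows $f\in C^\infty(G)$, again a contradiction.

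I do not expect a real obstacle here: the argument is a routine adaptation of the product case. The single new feature is that $\lambda_m(\xi)-iq$ is complex unless $q\in i\R$, so the vanishing condition and the estimate refer to its modulus, which affects nothing; in fact, if $\mbox{Re}(q)\neq0$ then $\mathcal{N}=\varnothing$ and $|\lambda_m(\xi)-iq|\geq|\mbox{Re}(q)|$, so both conditions hold automatically and $L_q$ is then globally hypoelliptic. The only genuine work is bookkeeping the constants in the two-regime estimate and re-deriving the Fourier characterizations of $C^\infty(G)$ and $\DG$ through Proposition~\ref{smoo} rather than Proposition~\ref{smoopartial} --- which is no doubt why the authors omit the details.
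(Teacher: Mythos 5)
Your proposal is correct and is precisely the argument the paper has in mind: the authors explicitly omit the proof as ``similar to the vector fields case,'' i.e.\ the proof of Theorem~\ref{thm1} (and Proposition~\ref{lemmainf}) with $\lambda_m(\xi)+a\mu_r(\eta)$ replaced by $\lambda_m(\xi)-iq$ and Proposition~\ref{smoo} used in place of Proposition~\ref{smoopartial}. Your added observations (the case $\mathrm{Re}(q)\neq 0$, and why the $[\xi_M]$ must range over infinitely many classes) are accurate refinements of details the paper itself glosses over.
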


	Let $\mathcal{K}_q:=\{w \in \DG; \ {\widehat{w}}(\xi)_{mn}=0 \textrm{ whenever } \lambda_m(\xi)-iq=0, \, \mbox{for all} \, 1\leq m,n \leq d_\xi, 1 \leq r,s \leq d_\eta\}$.

\begin{defi}
 We say that $L_q$ is globally solvable if $L_q(\DG) = \mathcal{K}_q$.
\end{defi}
\begin{thm}\label{thmper2}
	The operator $L_q=X+q$ is globally solvable if and only if the condition \eqref{hypothesisper} is satisfied, that is, 
	$\exists C, \, M>0$ such that
	\begin{equation*}
	|\lambda_m(\xi)-iq|\geq C\jp{\xi}^{-M}, 
	\end{equation*}
	for all  $[\xi] \in \widehat{G}, \ 1 \leq m \leq d_\xi$ whenever $\lambda_m(\xi)+iq \neq 0$.
\end{thm}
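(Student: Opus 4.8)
The plan is to transcribe, almost verbatim, the proof of Theorem~\ref{solvability}, with the single representation $[\xi]\in\widehat{G}$ and the scalar $\lambda_m(\xi)-iq$ taking over the roles played there by the pair $([\xi],[\eta])$ and $\lambda_m(\xi)+a\mu_r(\eta)$, and with the one-group characterizations of Proposition~\ref{smoo} replacing Proposition~\ref{smoopartial}. As a preliminary remark I would note that $L_q(\DG)\subseteq\mathcal{K}_q$ holds unconditionally: if $L_qu=f$ then $\widehat f(\xi)_{mn}=i(\lambda_m(\xi)-iq)\widehat u(\xi)_{mn}$, so $\widehat f(\xi)_{mn}=0$ whenever $\lambda_m(\xi)-iq=0$. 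Hence global solvability is precisely the reverse inclusion $\mathcal{K}_q\subseteq L_q(\DG)$, and this is what both implications must address.

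For the sufficiency direction I would assume \eqref{hypothesisper}, fix $f\in\mathcal{K}_q$, and set $\widehat u(\xi)_{mn}:=0$ when $\lambda_m(\xi)-iq=0$ and $\widehat u(\xi)_{mn}:=\bigl(i(\lambda_m(\xi)-iq)\bigr)^{-1}\widehat f(\xi)_{mn}$ otherwise. Since $f\in\DG$, Proposition~\ref{smoo} furnishes $C,N>0$ with $|\widehat f(\xi)_{mn}|\le C\jp{\xi}^{N}$; together with \eqref{hypothesisper} this gives $|\widehat u(\xi)_{mn}|\le C'\jp{\xi}^{N+M}$ for all $[\xi]$, $m$, $n$, so Proposition~\ref{smoo} again shows that $u:=\sum_{[\xi]\in\widehat{G}}d_\xi\,\Tr(\xi(x)\widehat u(\xi))$ is a well-defined element of $\DG$. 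By construction $i(\lambda_m(\xi)-iq)\widehat u(\xi)_{mn}=\widehat f(\xi)_{mn}$ in every case — the vanishing case is exactly where $f\in\mathcal{K}_q$ is used — so $\widehat{L_q u}(\xi)=\widehat f(\xi)$ for all $[\xi]$ and therefore $L_qu=f$, proving $\mathcal{K}_q\subseteq L_q(\DG)$.

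For necessity I would argue by contraposition. If \eqref{hypothesisper} fails, then, exactly as in the proof of Theorem~\ref{solvability}, for each $M\in\N$ I can choose $[\xi_M]\in\widehat{G}$ and an index $\tilde m$ with $0<|\lambda_{\tilde m}(\xi_M)-iq|<\jp{\xi_M}^{-M}$, and after passing to a subsequence the $[\xi_M]$ may be taken pairwise distinct with $\jp{\xi_M}$ nondecreasing and unbounded. I would then define $f$ by $\widehat f(\xi)_{mn}:=1$ if $([\xi],m,n)=([\xi_M],\tilde m,\tilde m)$ for some $M$ and $\widehat f(\xi)_{mn}:=0$ otherwise; its coefficients are bounded, so $f\in\DG$, and since $\lambda_{\tilde m}(\xi_M)-iq\ne0$ we have $f\in\mathcal{K}_q$. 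If some $u\in\DG$ satisfied $L_qu=f$, then $\widehat u(\xi_M)_{\tilde m\tilde m}=\bigl(i(\lambda_{\tilde m}(\xi_M)-iq)\bigr)^{-1}$, so $\|\widehat u(\xi_M)\|_{\HS}\ge|\lambda_{\tilde m}(\xi_M)-iq|^{-1}>\jp{\xi_M}^{M}$ for all $M$, which violates the polynomial growth bound of Proposition~\ref{smoo}. Hence $f\in\mathcal{K}_q\setminus L_q(\DG)$ and $L_q$ fails to be globally solvable.

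The only delicate point — and the one I would treat carefully rather than wave through — is the extraction of the sequence $[\xi_M]$ in the necessity direction: one has to check that the failure of the polynomial lower bound actually forces $\jp{\xi_M}\to\infty$ (and hence distinctness of the representations), which it does because on any finite set of representations the finitely many nonzero values of $|\lambda_m(\xi)-iq|$ are bounded below by a positive constant while $\jp{\xi}^{-M}\le1$ for every $M$. Apart from this, every step is the obvious analogue of the argument for the constant-coefficient vector field, so I anticipate no substantive obstacle.
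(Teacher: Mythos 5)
Your proof is correct and is exactly the argument the paper intends: the paper omits the proof of Theorem~\ref{thmper2}, stating that it is the obvious analogue of Theorem~\ref{solvability}, and your transcription (including the careful justification that failure of \eqref{hypothesisper} forces infinitely many distinct $[\xi_M]$ with $\jp{\xi_M}\to\infty$) is a faithful and complete version of that analogue. Note only that the ``whenever $\lambda_m(\xi)+iq\neq 0$'' in the statement is a typo for $\lambda_m(\xi)-iq\neq 0$, which you have implicitly and correctly fixed.
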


\begin{cor}\label{implication}
	If $L_q$ is globally hypoelliptic, then $L_q$ is globally solvable.
\end{cor}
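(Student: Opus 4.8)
The plan is to simply compare the two characterizations just established. By Theorem~\ref{thmper}, if $L_q = X+q$ is globally hypoelliptic then two conditions hold: the set $\mathcal{N}$ is finite, and there exist $C,M>0$ with
$$
|\lambda_m(\xi)-iq|\geq C\jp{\xi}^{-M}
$$
for all $[\xi]\in\widehat{G}$ and $1\leq m\leq d_\xi$ with $\lambda_m(\xi)-iq\neq 0$. In particular, the estimate \eqref{hypothesisper} is satisfied.

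The second step is to invoke Theorem~\ref{thmper2}, which states that \eqref{hypothesisper} is \emph{equivalent} to the global solvability of $L_q$. Since global hypoellipticity gives us \eqref{hypothesisper} (the finiteness of $\mathcal{N}$ being simply an additional hypothesis that is not needed here), we conclude at once that $L_q$ is globally solvable, finishing the proof.

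There is essentially no obstacle: the content of the statement is entirely absorbed into Theorems~\ref{thmper} and~\ref{thmper2}, and the corollary is just the observation that the solvability estimate is one of the two hypoellipticity conditions. If one wished to avoid citing Theorem~\ref{thmper2} directly, one could instead reprove solvability by hand: given $f\in\mathcal{K}_q$, define $\widehat{u}(\xi)_{mn}$ to be $0$ when $\lambda_m(\xi)-iq=0$ and $-i(\lambda_m(\xi)-iq)^{-1}\widehat{f}(\xi)_{mn}$ otherwise, and use \eqref{hypothesisper} together with the distributional growth bound on $\widehat{f}$ (from Proposition~\ref{smoo}) to check that $\widehat{u}$ has at most polynomial growth, hence $u\in\DG$ with $L_q u=f$. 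But this merely repeats the argument of Theorem~\ref{solvability}, so the one-line deduction from the two preceding theorems is the natural route.
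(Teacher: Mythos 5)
Your proposal is correct and matches the paper's (implicit) argument exactly: the paper justifies the analogous corollary for $L=X_1+aX_2$ by observing that the solvability estimate coincides with condition 2 of the hypoellipticity theorem, and the same one-line deduction from Theorems~\ref{thmper} and~\ref{thmper2} is what is intended here. Nothing further is needed.
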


Recall the definition of global hypoellipticity modulo kernel given in Section \ref{weak}. The proof of the next result is similar to Proposition \ref{modker} and its proof will be omitted. 
\begin{prop}
	The operator $L_q$ is globally hypoelliptic modulo $\ker L_q$ if and only if $L_q$ is globally solvable.
\end{prop}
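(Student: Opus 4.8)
The plan is to transcribe the proof of Proposition~\ref{modker}, replacing the symbol $\lambda_m(\xi)+a\mu_r(\eta)$ of $L=X_1+aX_2$ by the symbol $\lambda_m(\xi)-iq$ of $L_q=X+q$, the product $G_1\times G_2$ by the single group $G$, Proposition~\ref{smoopartial} by Proposition~\ref{smoo}, and Theorem~\ref{solvability} by Theorem~\ref{thmper2}. The one ingredient needed beyond the statements already at hand is the obvious $L_q$-analogue of Proposition~\ref{solvabilitysmooth}: \emph{if $L_q$ is globally solvable and $f\in\mathcal{K}_q\cap C^\infty(G)$, then there is $u\in C^\infty(G)$ with $L_qu=f$}. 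This I would prove exactly as Proposition~\ref{solvabilitysmooth}: set $\widehat{u}(\xi)_{mn}$ equal to $0$ when $\lambda_m(\xi)-iq=0$ and to $\bigl(i(\lambda_m(\xi)-iq)\bigr)^{-1}\widehat{f}(\xi)_{mn}$ otherwise; the lower bound \eqref{hypothesisper} guaranteed by global solvability, together with the rapid decay of $\widehat{f}$ and Proposition~\ref{smoo}, yields $u\in C^\infty(G)$, and $L_qu=f$ by construction.

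For the implication ``$L_q$ globally solvable $\implies L_q$ globally hypoelliptic modulo $\ker L_q$'', I would take $u\in\DG$ with $L_qu=f\in C^\infty(G)$. From $\widehat{f}(\xi)_{mn}=i(\lambda_m(\xi)-iq)\widehat{u}(\xi)_{mn}$ one gets $\widehat{f}(\xi)_{mn}=0$ whenever $\lambda_m(\xi)-iq=0$, so $f\in\mathcal{K}_q\cap C^\infty(G)$; by the ingredient above there is $v\in C^\infty(G)$ with $L_qv=f$, hence $L_q(u-v)=0$, i.e.\ $u-v\in\ker L_q$, which is precisely global hypoellipticity modulo $\ker L_q$.

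For the converse I would argue by contraposition. If $L_q$ is not globally solvable then, by Theorem~\ref{thmper2}, the estimate \eqref{hypothesisper} fails: for every $M\in\N$ there exist $[\xi_M]\in\widehat{G}$ and an index $m_M$ with $0<|\lambda_{m_M}(\xi_M)-iq|\le\jp{\xi_M}^{-M}$, and, exactly as in the proof of Theorem~\ref{thm1}, these give infinitely many distinct $[\xi_M]$ with $\jp{\xi_M}\to\infty$. Define $u\in\DG$ by setting $\widehat{u}(\xi_M)_{m_M 1}=1$ for every $M$ and $\widehat{u}(\xi)_{mn}=0$ for all other $([\xi],m,n)$; since its coefficients are bounded, Proposition~\ref{smoo} gives $u\in\DG$, and since $\widehat{u}$ does not decay, $u\notin C^\infty(G)$. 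Then $f:=L_qu$ has $\widehat{f}(\xi_M)_{m_M 1}=i(\lambda_{m_M}(\xi_M)-iq)=O(\jp{\xi_M}^{-M})$ and all other coefficients zero, so, as in the proof of Theorem~\ref{thm1}, $f\in C^\infty(G)$. If some $v\in C^\infty(G)$ satisfied $u-v\in\ker L_q$, then $i(\lambda_m(\xi)-iq)\widehat{u-v}(\xi)_{mn}=0$ for all $[\xi],m,n$, which forces $\widehat{v}(\xi)_{mn}=\widehat{u}(\xi)_{mn}$ whenever $\lambda_m(\xi)-iq\neq0$; in particular $\widehat{v}(\xi_M)_{m_M 1}=1$ for infinitely many $M$, contradicting $v\in C^\infty(G)$. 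Hence $L_q$ is not globally hypoelliptic modulo $\ker L_q$. There is no essential new difficulty compared with Proposition~\ref{modker}; the only step calling for (routine) verification is the $L_q$-version of Proposition~\ref{solvabilitysmooth} isolated in the first paragraph, and it is worth keeping in mind that the equivalence carries genuine content exactly when $\ker L_q\neq\{0\}$, i.e.\ when $iq$ coincides with one of the eigenvalues $\lambda_m(\xi)$.
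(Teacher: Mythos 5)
Your proposal is correct and follows exactly the route the paper intends: the paper omits this proof, stating it is similar to that of Proposition~\ref{modker}, and your transcription (including the $L_q$-analogue of Proposition~\ref{solvabilitysmooth} for the sufficiency direction and the non-smooth quasi-solution construction via Theorem~\ref{thmper2} for the necessity direction) is precisely that adaptation.
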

\begin{ex}\label{examples32}
	Let $G=\St$ and consider 
	$$
	L_q=X+q, \quad q \in \C.
	$$
	As in Example \ref{examples3} we can assume that the vector field $X$ defined on $\St$ has the symbol
	$$
	\sigma(\ell)_{mn} = im\delta_{mn}, \quad \ell \in \tfrac{1}{2}\N_0, \ -\ell \leq m,n \leq \ell, \ell-m,\ell-n \in \N_0.
	$$
	In this case,
	$$
	\mathcal{N} = \{\ell \in \tfrac{1}{2}\N_0;  m-iq = 0, \mbox{ for some } -\ell \leq m \leq \ell, \ \ell-m \in \N_0 \}
	$$
	and the inequality \eqref{hypothesisper} becomes
	\begin{equation}\label{condition2s3}
	|m-iq| \geq C(1+\ell)^{-M},
	\end{equation}
	for all $\ell \in \tfrac{1}{2}\N_0$, $-\ell \leq m \leq \ell$, $\ell-m\in\N_0$ whenever $m-iq \neq 0$, for some $C,M >0$.
	
	We have $\mathcal{N} = \varnothing$ for all $q \notin i\tfrac{1}{2}\Z$ and $\mathcal{N}$ has infinitely many elements otherwise. Notice that the condition \eqref{condition2s3} is always satisfied. Therefore $L_q$ is globally hypoelliptic if and only if $q \notin i\tfrac{1}{2}\Z$ and $L_q$ is globally solvable for all $q \in \C$.
\end{ex}

For examples on the torus, we refer the reader to Proposition 4.1 of \cite{Ber94} where the author presented conditions to obtain global hypoellipticity of perturbed operators by constants and constructed some enlightening examples.

\subsection{Perturbations by functions}

In this section we are concerned with the operator $L_q:= X+q$,  considering now $q \in C^\infty(G)$. The idea is to establish a connection between the global hypoellipticity of $L_q$ and $L_{q_0} = X+q_0$, where $q_0$ is the average of $q$ in $G$.
 
 Let $G$ be a compact Lie group, $X \in \mathfrak{g}$, and $Q \in C^\infty(G)$. We can write
$$
XQ = q - q_0,
$$
where $q \in C^\infty(G)$ and $q_0 = \int_G q(x) \, dx$. 
\begin{lemma}\label{solution}
For any $\varphi \in C^\infty(G)$ we have
	$$
	Xe^{\varphi} = (X\varphi)e^{\varphi}.
	$$
\end{lemma}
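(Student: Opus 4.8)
The plan is to unwind the definition of the left-invariant derivative $L_X$ and reduce the claim to the elementary one-variable chain rule for the composition of the entire function $z\mapsto e^{z}$ with a smooth curve in $G$.

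First I would observe that $e^{\varphi}\in C^\infty(G)$, since $\varphi\in C^\infty(G)$ and the (complex) exponential is smooth; hence $Xe^{\varphi}=L_Xe^{\varphi}$ is a well-defined element of $C^\infty(G)$ and it suffices to verify the identity pointwise. Fix $x\in G$. By the very definition of $L_X$,
$$
Xe^{\varphi}(x)=\frac{d}{dt}\,e^{\varphi(x\exp(tX))}\bigg|_{t=0}.
$$
Then I would introduce the smooth function $g(t):=\varphi(x\exp(tX))$, defined for $t$ in a neighbourhood of $0$, and note that, again by the definition of $L_X$, one has $g'(0)=(X\varphi)(x)$.

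Finally I would apply the one-variable chain rule $\frac{d}{dt}e^{g(t)}=g'(t)\,e^{g(t)}$ and evaluate at $t=0$, obtaining $g'(0)e^{g(0)}=(X\varphi)(x)\,e^{\varphi(x)}$. Since $x\in G$ is arbitrary, this gives $Xe^{\varphi}=(X\varphi)e^{\varphi}$. There is essentially no obstacle here: the only thing to be careful about is to argue through the flow-derivative description of $L_X$ (or, alternatively, through the Leibniz rule applied to the power series $e^{\varphi}=\sum_n \varphi^n/n!$, which would additionally require justifying term-by-term differentiation in $C^\infty(G)$), rather than attempting any Fourier-side manipulation, since $e^{\varphi}$ need not admit a convenient spectral description.
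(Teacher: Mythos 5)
Your proof is correct, but it takes a different route from the paper. The paper proves the lemma by expanding $e^{\varphi(x)}=\sum_{k\ge 0}\varphi(x)^k/k!$ and differentiating term by term, applying the Leibniz rule to each power $\varphi^k$ and resumming; it passes $X$ through the infinite sum without explicitly justifying the interchange (which would require uniform convergence of the differentiated series, say on the compact group $G$ using that $\varphi$ is bounded). Your argument instead goes through the flow definition $L_Xf(x)=\frac{d}{dt}f(x\exp(tX))\big|_{t=0}$ and reduces the identity to the one-variable chain rule for $t\mapsto e^{g(t)}$ with $g(t)=\varphi(x\exp(tX))$; this is more elementary, entirely pointwise, and sidesteps any convergence or interchange-of-limits issue. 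You even flag the power-series route as the alternative that would need the extra term-by-term justification --- which is precisely the route the paper takes. Both arguments are valid; yours buys rigor at no extra cost, while the paper's is a quicker formal computation in the same algebraic spirit as the rest of its symbol calculus.
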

\begin{proof}
Let $x \in G$, then
\begin{align}
(Xe^\varphi)(x) & = X \sum_{k=0}^\infty \frac{\varphi(x)^k}{k!} = \sum_{k=0}^\infty \frac{X\varphi(x)^k}{k!} = \sum_{k=1}^\infty \frac{k\varphi(x)^{k-1}(X\varphi)(x)}{k!} \nonumber \\
&= (X\varphi)(x) \sum_{k=1}^\infty \frac{\varphi(x)^{k-1}}{(k-1)!} = (X\varphi)(x) e^{\varphi(x)} \nonumber .
\end{align}

\end{proof}

Let $L_q:C^\infty(G) \to C^\infty(G)$ be defined by
$$
L_q u = Xu(x) + q u, \quad  u \in C^\infty(G).
$$
We can extend $L_q$ to $\DG$ as in \eqref{extend}.

\begin{prop}\label{conjugation}
	Assume that there exists $Q \in C^\infty(G)$ such that $XQ = q - q_0$, where $q_0=\int_G q(x) \, dx$. Then 
	\begin{enumerate}[1.]
		\item $L_{q} \circ e^{-Q} = e^{-Q} \circ L_{q_0},$ in both $C^\infty(G)$ and in $\DG$;
		\item $L_{q}$ is globally hypoelliptic if and only if $L_{q_0}$ is globally hypoelliptic;
		\item $L_{q}$ is globally hypoelliptic modulo $\ker L_q$ if and only if $L_{q_0}$ is globally hypoelliptic modulo $\ker L_{q_0}$.
	\end{enumerate}
\end{prop}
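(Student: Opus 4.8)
The plan is to prove the three statements in order, each reducing to the previous one. The central computational fact is Lemma \ref{solution}, which gives $Xe^{-Q} = -(XQ)e^{-Q} = (q_0 - q)e^{-Q}$. First I would establish statement 1 by a direct calculation on smooth functions: for $u \in C^\infty(G)$, the product rule for the left-invariant vector field $X$ gives
$$
L_q(e^{-Q}u) = X(e^{-Q}u) + q e^{-Q} u = (Xe^{-Q})u + e^{-Q}(Xu) + q e^{-Q} u.
$$
Substituting $Xe^{-Q} = (q_0 - q)e^{-Q}$, the terms involving $q$ cancel and one is left with $e^{-Q}(Xu + q_0 u) = e^{-Q}(L_{q_0}u)$, which is exactly $e^{-Q}\circ L_{q_0}$ applied to $u$. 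To pass to $\DG$, I would use the extension formula \eqref{extend}: $\jp{L_q(e^{-Q}u),\varphi} = -\jp{e^{-Q}u, L_{-q}\varphi}$, and since $e^{-Q}$ acts on distributions by $\jp{e^{-Q}u,\psi} = \jp{u, e^{-Q}\psi}$ (multiplication by a fixed smooth function is its own transpose), one reduces to the identity $e^{-Q}L_{-q}\varphi = L_{-q_0}(e^{-Q}\varphi)$, which is the smooth-case identity already proved (with $q$ replaced by $-q$, noting $X(-Q) = q - q_0$ still has the right form). Both $e^{-Q}$ and $e^{Q}$ are continuous bijections of $C^\infty(G)$ and of $\DG$, so the conjugation is genuine: $L_q = e^{-Q}\circ L_{q_0}\circ e^{Q}$.

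For statement 2, I would argue directly from the conjugation. Suppose $L_{q_0}$ is globally hypoelliptic and let $u \in \DG$ with $L_q u \in C^\infty(G)$. Then $L_{q_0}(e^{Q}u) = e^{Q}(L_q u) \in C^\infty(G)$ because $e^Q$ preserves $C^\infty(G)$, and $e^Q u \in \DG$; hypoellipticity of $L_{q_0}$ gives $e^Q u \in C^\infty(G)$, hence $u = e^{-Q}(e^Q u) \in C^\infty(G)$. The converse is identical with the roles of $q$ and $q_0$ swapped, using $L_{q_0} = e^{Q}\circ L_q\circ e^{-Q}$.

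Statement 3 follows the same template but tracks kernels. The key observation is that $e^Q$ maps $\ker L_q$ bijectively onto $\ker L_{q_0}$: if $L_q v = 0$ then $L_{q_0}(e^Q v) = e^Q(L_q v) = 0$, and conversely. Now suppose $L_{q_0}$ is globally hypoelliptic modulo $\ker L_{q_0}$ and let $u \in \DG$ with $L_q u \in C^\infty(G)$. As above, $e^Q u \in \DG$ and $L_{q_0}(e^Q u) \in C^\infty(G)$, so there is $w \in C^\infty(G)$ with $e^Q u - w \in \ker L_{q_0}$. Applying $e^{-Q}$, we get $u - e^{-Q}w \in e^{-Q}(\ker L_{q_0}) = \ker L_q$, and $v := e^{-Q}w \in C^\infty(G)$ since $e^{-Q}$ preserves smoothness; this is exactly the required conclusion. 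The converse direction is symmetric.

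I do not expect any serious obstacle here; the only point requiring a little care is the verification that the conjugation identity from statement 1 extends correctly to $\DG$ — one must check that the transpose of multiplication by $e^{-Q}$ is again multiplication by $e^{-Q}$ and correctly combine this with the transpose $L_{-q}$ appearing in \eqref{extend}. Once statement 1 is in place for distributions, statements 2 and 3 are formal consequences of the fact that $e^{\pm Q}$ are topological isomorphisms of both $C^\infty(G)$ and $\DG$ that carry kernels to kernels.
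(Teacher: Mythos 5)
Your proposal is correct and follows essentially the same route as the paper: part 1 by the product rule together with Lemma \ref{solution}, and parts 2 and 3 as formal consequences of the conjugation identity, with $e^{\pm Q}$ acting as isomorphisms of $C^\infty(G)$ and $\DG$. In fact you supply details the paper leaves implicit (the transpose argument extending the identity to $\DG$, and the observation that $e^{Q}$ carries $\ker L_q$ onto $\ker L_{q_0}$ in part 3), and these are verified correctly.
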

\begin{proof}

1. Let $u \in C^\infty(G)$. Then
\begin{align*}
(L_{q} \circ e^{-Q})u &= L_{q}(e^{-Q}u) \\
&= X(e^{-Q}u) + q e^{-Q}u = (Xe^{-Q}) u + e^{-Q}Xu + q e^{-Q} u \nonumber \\
&= (-XQ) e^{-Q} u+ e^{-Q}Xu + q e^{-Q}u  \\
&= -(q-q_0)e^{-Q} u+ e^{-Q}Xu + q e^{-Q}u \nonumber\\
&=  e^{-Q}(Xu+q_0 u) \\
&= (e^{-Q} \circ L_{q_0})u \nonumber.
\end{align*}
The same is true when we have $u\in \DG$.

2. Suppose that $L_q$ is globally hypoelliptic. If $L_{q_0}u = f \in C^{\infty}(G)$ for some $u\in \DG$, then
$e^{-Q}L_{q_0}u = e^{-Q} f \in C^{\infty}(G)$. Since $ e^{-Q} \circ L_{q_0}=L_{q} \circ e^{-Q},$ we have
$L_q(e^{-Q} u) \in C^\infty(G)$ and, by the global hypoellipticity of $L_{q}$ we have $e^{-Q}u \in C^\infty(G)$, which implies that $u\in C^\infty(G)$ and then $L_{q_0}$ is globally hypoelliptic.

Assume now that $L_{q_0}$ is globally hypoelliptic. If $L_q u = f \in C^\infty(G)$ for some $u\in \DG$, we can write $L_q (e^{-Q} e^Q u) = f \in C^\infty(G)$. By the fact that $L_{q} \circ e^{-Q} = e^{-Q} \circ L_{q_0} $ we obtain $e^{-Q} L_{q_0}(e^Q u) = f$, that is, $L_{q_0}(e^Q u) = e^Q f \in C^\infty(G)$. By the global hypoellipticity of $L_{q_0}$ we have that $e^Q u \in C^\infty(G)$ and then $u \in C^\infty(G)$.

3. The proof is analogous to the item 2.
\end{proof}

Now assume that $L_q u=f \in \DG$ for some $u\in\DG$. We may write $u=e^{-Q}(e^Q u)$, so $L_q (e^{-Q}(e^Q u))=f$. By Proposition \ref{conjugation}, we have $e^{-Q} L_{q_0} e^{Q}u = f$, that is,
$$
L_{q_0}e^{Q}u = e^{Q}f.
$$
This implies that $e^Q f \in \mathcal{K}_{q_0}$.

\begin{defi}\label{definitionsolv}
	We say that the operator $L_q$ is globally solvable if:
	\begin{enumerate}[1.]
		\item there is $Q$ such that $XQ = q-q_0$, where  $q_0 = \int_G q(x)\, dx$; and 
		\item $L_{q}(\DG) = \mathcal{J}_{q}$, where
	$$
	\mathcal{J}_q := \{v \in \DG; \ e^Q v \in \mathcal{K}_{q_0} \}.
	$$
\end{enumerate}
\end{defi}

\begin{prop}\label{solvability2}
	$L_q$ is globally solvable if and only if $L_{q_0}$ is globally solvable.
\end{prop}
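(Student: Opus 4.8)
The plan is to transport global solvability back and forth between $L_q$ and $L_{q_0}$ using the conjugation of Proposition~\ref{conjugation}. Throughout, fix $Q\in C^\infty(G)$ with $XQ=q-q_0$; the existence of such a $Q$ is precisely condition~1 of Definition~\ref{definitionsolv} (and the standing hypothesis of Proposition~\ref{conjugation}), so that under this assumption ``$L_q$ is globally solvable'' means exactly $L_q(\DG)=\mathcal{J}_q$.

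First I would record the elementary fact that multiplication by $e^Q$ and by $e^{-Q}$ are continuous linear maps $\DG\to\DG$ (multiplication by a smooth function preserves $\DG$), and that they are mutually inverse, since $e^Q e^{-Q}\equiv 1$; hence each is a bijection of $\DG$, and likewise of $C^\infty(G)$. In particular $e^{-Q}\DG=\DG$, and, directly from the definition,
\[
\mathcal{J}_q=\{v\in\DG:\ e^Qv\in\mathcal{K}_{q_0}\}=e^{-Q}\mathcal{K}_{q_0}.
\]

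Next I would invoke Proposition~\ref{conjugation}(1), namely $L_q\circ e^{-Q}=e^{-Q}\circ L_{q_0}$ as operators on $\DG$. Applying both sides to all of $\DG$ and using $e^{-Q}\DG=\DG$ gives
\[
L_q(\DG)=L_q\big(e^{-Q}\DG\big)=e^{-Q}\,L_{q_0}(\DG).
\]
Combining this with $\mathcal{J}_q=e^{-Q}\mathcal{K}_{q_0}$ and the injectivity of multiplication by $e^{-Q}$ yields the chain
\[
L_q(\DG)=\mathcal{J}_q\iff e^{-Q}L_{q_0}(\DG)=e^{-Q}\mathcal{K}_{q_0}\iff L_{q_0}(\DG)=\mathcal{K}_{q_0},
\]
i.e. $L_q$ is globally solvable if and only if $L_{q_0}$ is globally solvable, which is the assertion.

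I do not expect a genuine obstacle, since the conjugation in Proposition~\ref{conjugation} already does the real work; the only points needing a little care are bookkeeping: verifying that multiplication by $e^{\pm Q}$ is an isomorphism of $\DG$ and not merely of $C^\infty(G)$, keeping in mind that ``$L_q$ globally solvable'' carries condition~1 of Definition~\ref{definitionsolv} with it (so that the fixed $Q$ is available in both directions), and noting that $\mathcal{J}_q$ is a linear subspace, whence the freedom in the choice of $Q$ (a constant, in the typical case where $\ker X$ reduces to the constants) does not change $\mathcal{J}_q$. Beyond that, the argument is just the principle that conjugating by an isomorphism a surjection onto $\mathcal{K}_{q_0}$ produces a surjection onto the image of $\mathcal{K}_{q_0}$.
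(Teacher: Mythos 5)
Your proof is correct and follows essentially the same route as the paper: both rest on the conjugation identity $L_q\circ e^{-Q}=e^{-Q}\circ L_{q_0}$ from Proposition~\ref{conjugation}, the fact that multiplication by $e^{\pm Q}$ gives mutually inverse bijections of $\DG$, and the observation that $\mathcal{J}_q=e^{-Q}\mathcal{K}_{q_0}$. The only difference is presentational: the paper argues element by element (taking $f\in\mathcal{K}_{q_0}$, resp.\ $f\in\mathcal{J}_q$, and producing a preimage), whereas you package the same computation as a single set-level identity $L_q(\DG)=e^{-Q}L_{q_0}(\DG)$.
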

\begin{proof}
Assume that $L_q$ is globally solvable and let $f \in \mathcal{K}_{q_0}$. Let us show that there exists $u\in \DG$ such that $L_{q_0}u=f$. We can write $f=e^{Q}e^{-Q}f$, so $e^{-Q }f \in \mathcal{J}_{q}$. Since $L_q$ is globally solvable, there exists $v \in \DG$ such that $L_q v = e^{-Q }f $. We can write $v=e^{-Q} e^Q v$ and then $L_q (e^{-Q} e^Q v) = e^{-Q }f $. By Proposition \ref{conjugation}, we have
$$
e^{-Q}L_{q_0}e^Q v = L_q (e^{-Q} e^Q v) = e^{-Q }f,
$$
that is, $L_{q_0}e^Q v=f$.

Suppose now that $L_{q_0}$ is globally solvable and let $f \in \mathcal{J}_q$. By the definition of $\mathcal{J}_q$, we have $e^Q f \in \mathcal{K}_{q_0}$ and by the global solvability of $L_{q_0}$, there exists $u \in \DG$ such that $L_{q_0} u = e^Q f$, that is, $e^{-Q}L_{q_0}u = f$. By Proposition \ref{conjugation}, we get $L_q e^{-Q}u =f$.

\end{proof}

\begin{cor}
	If $L_q$ is globally hypoelliptic then $L_{q}$ is globally solvable.
\end{cor}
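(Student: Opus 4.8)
The plan is to reduce the statement to the constant-coefficient case already settled in Corollary~\ref{implication}, by transporting global hypoellipticity and global solvability across the conjugation $L_q\circ e^{-Q}=e^{-Q}\circ L_{q_0}$. We work under the standing assumption of this subsection that $q-q_0=XQ$ for some $Q\in C^\infty(G)$, where $q_0=\int_G q(x)\,dx\in\C$; this is exactly the hypothesis needed for Propositions~\ref{conjugation} and~\ref{solvability2} to apply.

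First I would note that if $L_q$ is globally hypoelliptic, then Proposition~\ref{conjugation}(2) yields that $L_{q_0}=X+q_0$ is globally hypoelliptic. Since $q_0$ is a \emph{constant}, the operator $L_{q_0}$ falls under the scope of Subsection~\ref{addconstant}, and Corollary~\ref{implication} gives that $L_{q_0}$ is globally solvable. Finally, Proposition~\ref{solvability2} states that $L_q$ is globally solvable if and only if $L_{q_0}$ is, and combining the two previous steps gives the claim. The only point demanding a moment's care is bookkeeping with Definition~\ref{definitionsolv}: ``$L_q$ globally solvable'' carries two clauses, the existence of $Q$ with $XQ=q-q_0$ (which holds by the standing assumption) and the identity $L_q(\DG)=\mathcal{J}_q$ with $\mathcal{J}_q=\{v\in\DG;\ e^Q v\in\mathcal{K}_{q_0}\}$; the latter is precisely what Proposition~\ref{solvability2} supplies.

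There is essentially no genuine obstacle here, as the corollary is a formal consequence of the conjugation identity together with the constant-coefficient corollary. If one preferred not to cite Proposition~\ref{solvability2} as a black box, one could instead reproduce its brief argument: given $f\in\mathcal{J}_q$, so that $e^Q f\in\mathcal{K}_{q_0}$, use the global solvability of $L_{q_0}$ to produce $u\in\DG$ with $L_{q_0}u=e^Q f$, and then check, using $L_q\circ e^{-Q}=e^{-Q}\circ L_{q_0}$ from Proposition~\ref{conjugation}(1), that $L_q(e^{-Q}u)=e^{-Q}L_{q_0}u=f$, so that $f\in L_q(\DG)$.
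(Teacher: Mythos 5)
Your argument is exactly the paper's proof: apply Proposition \ref{conjugation} to transfer global hypoellipticity from $L_q$ to $L_{q_0}$, invoke Corollary \ref{implication} for the constant-coefficient operator, and then use Proposition \ref{solvability2} to transfer global solvability back to $L_q$. The additional remarks about Definition \ref{definitionsolv} and the optional unpacking of Proposition \ref{solvability2} are correct but not needed.
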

\begin{proof}
Suppose that $L_q$ is globally hypoelliptic. By Proposition \ref{conjugation} the operator $L_{q_0}$ is globally hypoelliptic, so by Corollary \ref{implication}, $L_{q_0}$ is globally solvable. Finally, by Proposition \ref{solvability2}, we conclude that $L_q$ is globally solvable.
\end{proof}

\begin{cor}
 $L_q$ is globally hypoelliptic modulo $\ker L_q$ if and only if $L_q$ is globally solvable.
\end{cor}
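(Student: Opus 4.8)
The plan is to transport the statement, via the conjugation by $e^{-Q}$, to the constant-coefficient perturbation $L_{q_0}$, for which the corresponding equivalence has already been obtained in Subsection~\ref{addconstant}. As everywhere in this subsection, I work under the standing assumption that there is a function $Q\in C^\infty(G)$ with $XQ=q-q_0$, $q_0=\int_G q(x)\,dx$; recall that this existence is built into the very definition of global solvability of $L_q$ (Definition~\ref{definitionsolv}), so without it the right-hand side of the asserted equivalence fails by fiat.

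First I would collect the three ingredients. By Proposition~\ref{conjugation}(3), $L_q$ is globally hypoelliptic modulo $\ker L_q$ if and only if $L_{q_0}$ is globally hypoelliptic modulo $\ker L_{q_0}$. Next, since $q_0$ is a constant, the equivalence established at the end of Subsection~\ref{addconstant} (the analogue for perturbations by constants of Proposition~\ref{modker}, together with Theorems~\ref{thmper} and~\ref{thmper2}) yields that $L_{q_0}$ is globally hypoelliptic modulo $\ker L_{q_0}$ if and only if $L_{q_0}$ is globally solvable. Finally, by Proposition~\ref{solvability2}, $L_{q_0}$ is globally solvable if and only if $L_q$ is globally solvable. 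Chaining these three biconditionals,
$$
L_q \text{ g.h. mod }\ker L_q \iff L_{q_0}\text{ g.h. mod }\ker L_{q_0} \iff L_{q_0}\text{ globally solvable} \iff L_q\text{ globally solvable},
$$
which is exactly the claim. The argument thus introduces no new estimates; it is a purely formal three-step reduction once the conjugation machinery of Proposition~\ref{conjugation} and Proposition~\ref{solvability2} is in place.

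The one place I would be careful is precisely the bookkeeping around the auxiliary function $Q$: Proposition~\ref{conjugation} presupposes its existence, and Definition~\ref{definitionsolv} incorporates it, whereas the abstract notion of global hypoellipticity modulo kernel does not refer to $Q$ at all. Consequently the cleanest route is to state the corollary under the blanket hypothesis (implicit from Proposition~\ref{conjugation} onward) that such a $Q$ exists; this is the main—indeed the only—subtlety, and with it fixed the proof is immediate from the results already assembled.
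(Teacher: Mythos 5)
Your proof is correct and is exactly the argument the paper intends (the corollary is stated without proof, but it follows the same pattern as the preceding corollary): chain Proposition~\ref{conjugation}(3), the constant-coefficient equivalence from Subsection~\ref{addconstant}, and Proposition~\ref{solvability2}. Your remark about the standing assumption that $Q$ exists is also consistent with the paper, which builds that hypothesis into Proposition~\ref{conjugation} and Definition~\ref{definitionsolv}.
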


\begin{ex}
	Let $G=\mathbb{T}^2$ and consider
	$$
	L_q = \partial_t + \partial_x + q(t,x),
	$$
	where $q(t,x) = \sin(t+x)$. For $Q(t,x)=-\tfrac{1}{2}\cos(t+x)$ we have $(\partial_t +\partial_x)Q(t,x) = q(t,x)-q_0$, where $q_0=0$. Since $L_{q_0} = \partial_t+\partial_x$ is not globally hypoelliptic, we conclude by Proposition \ref{conjugation} that $L_q$ is not globally hypoelliptic. On the other hand, the operator $L_{q_0}$ is globally solvable, so that $L_q$ is globally solvable.
	
	For $q(t,x) = \sin(t+x)+1$, we have $q_0=1$ and by Theorem \ref{thmper} we have that $L_{q_0}$ is globally hypoelliptic and then $L_q$ is globally hypoelliptic, which implies that $L_q$ is also globally solvable.
\end{ex}

\begin{ex}\label{examples33}
	Let $G=\mathbb{S}^3$.	We can identify $\St$ with $\emph{\mbox{SU}}(2)$, and the Euler's angle parametrization of $\emph{\mbox{SU}}(2)$ is given by
	$$
	x(\phi,\theta,\psi) = \left(
	\begin{array}{cc}
	\cos\left(\tfrac{\theta}{2}\right) e^{i(\phi+\psi)/2} & i	\sin\left(\tfrac{\theta}{2}\right) e^{i(\phi-\psi)/2}\\
	i	\sin\left(\tfrac{\theta}{2}\right) e^{-i(\phi-\psi)/2} & 	\cos\left(\tfrac{\theta}{2}\right) e^{-i(\phi+\psi)/2}
	\end{array}
	\right)  \in \emph{\mbox{SU}}(2) ,
	$$
	where $0 \leq \phi < 2\pi$, $0 \leq \theta \leq \pi$ and $-2\pi \leq \psi < 2\pi$. Hence, the trace function on $\emph{\mbox{SU}}(2)$ in Euler's angles is given by
	$$
	\emph{\mbox{tr}}(x(\phi,\theta,\psi)) = 2\cos\left(\tfrac{\theta}{2}\right)\cos\left(\tfrac{\phi+\psi}{2}\right).
	$$ 
	
	Consider the operator
	$$
	L_q = X + q(x),
	$$
	where $X$ is the same vector field from Example \ref{examples3} and $q(x)=h(x)+\sqrt{2}$, where $h: \St \to \C$ is expressed in Euler's angles by
	\begin{equation}\label{functions3}
	h(x(\phi,\theta,\psi)) = -\cos\left(\tfrac{\theta}{2}\right)\sin\left(\tfrac{\phi+\psi}{2}\right).
	\end{equation}

	The operator $X$ in Euler's angles is the operator $\partial_{\psi}$ and then we have 
	$$
	X\emph{\mbox{tr}}(x) = q(x) - \sqrt{2}.
	$$ 
	By Example \ref{examples32}, the operator $L_{q_0}=X+\sqrt{2}$ is globally hypoelliptic, then by Proposition \ref{conjugation} we have that $L_q$ is globally hypoelliptic, which implies that $L_q$ is also globally solvable. 

For $q(x)=h(x) + \frac{3}{2}i$ we have by Example \ref{examples3} that the operator $L_{q_0} = X+ \frac{3}{2}i$ is not globally hypoelliptic, then $L_{q}$ is not globally hypoelliptic. On the other hand, the operator $L_{q_0}$ is globally solvable, which implies that $L_{q}$ is globally solvable and globally hypoelliptic modulo kernel.
\end{ex}

\section{A class of vector fields with variable coefficients}\label{general}
Let $G_1$ and $G_2$ be compact Lie groups, and set $G=G_1 \times G_2$. In this section we will characterize the global hypoellipticity and the global solvability for operators in the form
$$
L_{aq}=X_1+ a(x_1)X_2+q(x_1,x_2),
$$
where $X_1 \in \mathfrak{g}_1$, $X_2 \in \mathfrak{g}_2$, $a \in C^\infty(G_1)$ is a real-valued function, and $q \in C^\infty(G)$. First, let us consider the case where $q \equiv 0$.

\subsection{Normal form}

Let
$$
L_a= X_1 + a(x_1)X_2,
$$
where $X_1 \in \mathfrak{g}_1$, $X_2 \in \mathfrak{g}_2$ and $a \in C^\infty(G_1)$ is a real-valued function. If $L_a u=f \in C^\infty(G)$, taking the partial Fourier coefficients with respect to the second variable, we obtain
$$
\widehat{L_a u}(x_1,\eta)_{rs}=X_1\widehat{u}(x_1,\eta)_{rs}+i\mu_r(\eta)a(x_1)\widehat{u}(x_1,\eta)_{rs}=\widehat{f}(x_1,\eta)_{rs},
$$
for all $[\eta]\in \widehat{G}_1$, $1\leq r,s\leq d_\eta$. The idea now is to find $\varphi(\: \cdot \:,\eta)_{rs} \neq 0$ such that
$$
v(x_1,\eta)_{rs}=\varphi(x_1,\eta)_{rs}\widehat{u}(x_1,\eta)_{rs}
$$
satisfies
$$
X_1v(x_1,\eta)_{rs}+i\mu_r(\eta)a_0 v(x_1,\eta)_{rs}=\varphi(x_1,\eta)_{rs}\widehat{f}(x_1,\eta)_{rs} := g(x_1,\eta)_{rs},
$$
for all $[\eta]\in \widehat{G}_1$, $1\leq r,s\leq d_\eta$, for some $a_0 \in \mathbb{R}$. So
\begin{align*}
	\varphi(x,\eta)_{rs}\widehat{f}(x_1,\eta)_{rs} &=X_1(\varphi(x_1,\eta)_{rs} \widehat{u}(x_1,\eta)_{rs}) + i \mu_r(\eta)a_0\varphi(x_1,\eta)_{rs}\widehat{u}(x_1,\eta)_{rs} \nonumber \\
	&= X_1(\varphi(x_1,\eta)_{rs})\widehat{u}(x_1,\eta)_{rs}  \\
	& \quad + \varphi(x_1,\eta)_{rs} (X_1\widehat{u}(x_1,\eta)_{rs})  + i\mu_r(\eta)a_0\varphi(x_1,\eta)_{rs}\widehat{u}(x_1,\eta)_{rs} \nonumber \\
	&=X_1(\varphi(x_1,\eta)_{rs})\widehat{u}(x_1,\eta)_{rs} \\ 
	& \quad + \varphi(x_1,\eta)_{rs}\bigr( (X_1\widehat{u}(x_1,\eta)_{rs}) +i\mu_r(\eta)a(x_1)\widehat{u}(x_1,\eta)_{rs}\bigr) \nonumber \\ 
		&\quad - i\mu_r(\eta) (a(x_1)-a_0)\varphi(x_1,\eta)_{rs}\widehat{u}(x_1,\eta)_{rs} \\
	&= X_1(\varphi(x_1,\eta)_{rs})\widehat{u}(x_1,\eta)_{rs}+ \varphi(x_1,\eta)_{rs}\widehat{f}(x_1,\eta)_{rs} \\
	&\quad - i\mu_r(\eta) (a(x_1)-a_0)\varphi(x_1,\eta)_{rs}\widehat{u}(x_1,\eta)_{rs} .
	\end{align*}
	
Thus, if $\widehat{u}(x_1,\eta)_{rs} \neq0$, we have
\begin{equation}\label{phieq}
X_1\varphi(x_1,\eta)_{rs} = i\mu_r(\eta) (a(x_1)-a_0)\varphi(x_1,\eta)_{rs}.
\end{equation}

Suppose that there exists $A \in C^\infty(G_1)$ such that $$X_1A(x_1) = a(x_1)-a_0.$$
We can assume that $A$ is a real-valued smooth function. Notice that
\begin{align*}
\int_{G_1} X_1 A(x_1) \, dx_1 & = \int_{G_1} \frac{d}{ds} A(x_1 \exp(sX_1))\bigg|_{s=0} \, dx_1 \\& = \frac{d}{ds} \int_{G_1} A(x_1\exp(sX_1))\, dx_1 \bigg|_{s=0} \\& = \frac{d}{ds} \int_{G_1} A(x_1)\, dx_1 \bigg|_{s=0}=0.
\end{align*}
So
$$ 0 = \int_{G_1} X_1A(x_1)\,dx_1 = \int_{G_1}(a(x_1)-a_0)\,dx_1 .$$
Therefore $a_0=\displaystyle\int_{G_1}a(x_1)\, dx_1$ and the equation \eqref{phieq} becomes
\begin{equation}\label{eqphi2}
X_1\varphi(x_1,\eta)_{rs} = i\mu_r(\eta) (X_1A)(x_1)\varphi(x_1,\eta)_{rs},
\end{equation}
and by Lemma \ref{solution}, the function 
$$
\varphi(x_1,\eta)_{rs}=e^{i\mu_r(\eta) A(x_1)}
$$
is a solution of \eqref{eqphi2}.

Define the operator $\Psi_a$ as
\begin{equation}\label{psialpha}
\Psi_a  u(x_1,x_2) := \sum_{[\eta]\in \widehat{G_2}}d_\eta \sum_{r,s = 1}^{d_\eta} e^{i\mu_r(\eta) A(x_1)}\widehat{u}(x_1,\eta)_{rs}\,{\eta_{sr}(x_2)}.
\end{equation}

The next lemma is a technical result necessary to show that the operator $\Psi_a$ is well-defined.

\begin{lemma}
	Let $G$ be a compact group, $f \in C^\infty(G)$, and $z \in \C$ with $|z|\geq 1$. Let $\{Y_1, \cdots, Y_d \}$ be a basis for $\mathfrak{g}$. Then for all $\beta \in \N_0^d$, there exists $C_\beta >0$ such that
	\begin{equation}\label{lemmaexp}
	|\partial^\beta e^{z f(x)}| \leq C_\beta |z|^{|\beta|} e^{\emph{\mbox{Re}}(zf(x))}, \quad \forall x \in G,
	\end{equation}
	with $\partial^\beta$ as in \eqref{derivative}.
\end{lemma}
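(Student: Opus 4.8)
The plan is to prove \eqref{lemmaexp} by induction on $|\beta|$, the argument being driven by Lemma \ref{solution} together with the compactness of $G$ and the hypothesis $|z|\ge 1$. For $|\beta|=0$ the inequality is in fact an equality, since $|e^{zf(x)}|=e^{\mathrm{Re}(zf(x))}$, so one may take $C_0=1$. For $|\beta|\ge 1$ it is convenient to first isolate a structural statement: I claim that for every $\beta\in\N_0^d$ with $|\beta|\ge 1$ one can write
\[
\partial^\beta e^{zf}=\Big(\sum_{k=1}^{|\beta|}z^k\,g_{\beta,k}\Big)e^{zf},
\]
where each $g_{\beta,k}\in C^\infty(G)$ is built only from $f$ and the fixed basis $\{Y_1,\dots,Y_d\}$ — explicitly, a finite sum of products of exactly $k$ left-invariant derivatives of $f$ whose orders add up to $|\beta|$ — and in particular does not depend on $z$.

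This structural claim is itself proved by induction on $|\beta|$, starting from $|\beta|=1$, where Lemma \ref{solution} (applied to $\varphi=zf$) gives $\partial^\beta e^{zf}=Ye^{zf}=z(Yf)e^{zf}$, so $g_{\beta,1}=Yf$. For the inductive step, write $\partial^\beta$ as in \eqref{derivative} and peel off the leftmost vector field, say $\partial^\beta=Y\circ\partial^{\beta'}$ with $Y\in\{Y_1,\dots,Y_d\}$ and $|\beta'|=|\beta|-1\ge 1$. Since $Y$ is a derivation, for any $h\in C^\infty(G)$ we have $Y(h\,e^{zf})=(Yh)e^{zf}+h\,(Ye^{zf})=\big(Yh+z(Yf)h\big)e^{zf}$, again by Lemma \ref{solution}. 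Taking $h=\sum_{k=1}^{|\beta'|}z^k g_{\beta',k}$ (valid by the inductive hypothesis) gives
\[
\partial^\beta e^{zf}=\Big(\sum_{k=1}^{|\beta'|}z^k\,Yg_{\beta',k}+\sum_{k=1}^{|\beta'|}z^{k+1}(Yf)\,g_{\beta',k}\Big)e^{zf},
\]
which is once more of the desired form, with $z$-degree at most $|\beta'|+1=|\beta|$ and with smooth, $z$-independent coefficients obtained from the previous ones by differentiation or by multiplication by $Yf$.

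With the structural claim established, the estimate follows immediately: since $G$ is compact, each $g_{\beta,k}$ is bounded, say $|g_{\beta,k}(x)|\le M_{\beta,k}$ for all $x\in G$, and because $|z|\ge 1$ we have $|z|^k\le|z|^{|\beta|}$ for every $1\le k\le|\beta|$, whence
\[
|\partial^\beta e^{zf}(x)|\le\Big(\sum_{k=1}^{|\beta|}|z|^k M_{\beta,k}\Big)e^{\mathrm{Re}(zf(x))}\le\Big(\sum_{k=1}^{|\beta|}M_{\beta,k}\Big)|z|^{|\beta|}e^{\mathrm{Re}(zf(x))},
\]
and we set $C_\beta:=\sum_{k=1}^{|\beta|}M_{\beta,k}$. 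There is no serious obstacle here; the only point requiring a little care is the bookkeeping in the structural claim — checking that the coefficients $g_{\beta,k}$ are genuinely independent of $z$ and that the power of $z$ never exceeds $|\beta|$ — since this is precisely what lets the hypothesis $|z|\ge 1$ collapse all the lower-order powers of $z$ into the single factor $|z|^{|\beta|}$.
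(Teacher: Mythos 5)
Your proof is correct and follows essentially the same route as the paper's: an induction on $|\beta|$ driven by the identity $Y e^{zf}=z(Yf)e^{zf}$ (Lemma \ref{solution}) together with the observation that $|z|\ge 1$ lets all lower powers of $z$ be absorbed into $|z|^{|\beta|}$. The only difference is organizational --- you first isolate the exact expansion $\partial^\beta e^{zf}=\bigl(\sum_{k=1}^{|\beta|}z^k g_{\beta,k}\bigr)e^{zf}$ with $z$-independent smooth coefficients and then estimate by compactness, whereas the paper estimates directly inside the induction via the Leibniz rule --- but the substance is identical.
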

\begin{proof}
Let us proceed by induction on $|\beta|$.

For $|\beta|=0$, we have
$$
|\partial^\beta e^{zf(x)}| = |e^{zf(x)}| = e^{\emph{\emph{\mbox{Re}}}(zf(x))}.
$$

Suppose now that \eqref{lemmaexp} holds for every $\gamma \in \N_0^d$ with $|\gamma| \leq k$ and let $\beta \in \N_0^d$ with $|\beta|=k+1$. We can write $\beta = \gamma+e_j$, for some $j=1,\cdots,d$ and $|\gamma|=k$. So
\begin{align*}
|\partial^\beta e^{zf(x)}| = |\partial^\gamma Y_je^{zf(x)}| = |\partial^\gamma(zY_jf(x) e^{zf(x)})|
&\leq |z| \sum_{\gamma'+\gamma''=\gamma} |\partial^{\gamma'} Y_jf(x)|\,| \partial^{\gamma''}e^{zf(x)}| \\
&\leq C_\beta |z|^{|\beta|}e^{\emph{\emph{\mbox{Re}}}(zf(x))}.
\end{align*}
\end{proof}
\begin{rem}
We have a similar result for the case where $|z| \leq 1$. In this case, the power of $|z|$ on the estimate \eqref{lemmaexp} is equal to $1$ for every $\beta\in\N_0$, i.e., for all $\beta \in \N_{0}^d$ there exists $C_\beta$ such that
	\begin{equation*}\label{lemmaexp2}
|\partial^\beta e^{z f(x)}| \leq C_\beta |z| e^{\emph{\mbox{Re}}(zf(x))}, \quad \forall x \in G.
\end{equation*}
\end{rem}
\begin{prop}\label{autom}
	The operator $\Psi_a$ defined in \eqref{psialpha} is an automorphism of $C^\infty(G)$ and of $\DG$.
\end{prop}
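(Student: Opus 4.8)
The plan is to write down $\Psi_a^{-1}$ explicitly, verify that $\Psi_a$ and this inverse are continuous endomorphisms of $C^\infty(G)$, check on the Fourier side that they are mutually inverse, and then carry the statement over to $\DG$. The candidate inverse is the operator obtained from \eqref{psialpha} by replacing $A$ with $-A$ (still a real-valued smooth function on $G_1$),
$$
\Psi_a^{-1}u(x_1,x_2):=\sum_{[\eta]\in\widehat{G_2}}d_\eta\sum_{r,s=1}^{d_\eta}e^{-i\mu_r(\eta)A(x_1)}\,\widehat u(x_1,\eta)_{rs}\,\eta_{sr}(x_2).
$$
A short computation with Peter--Weyl orthogonality shows that, for $u\in C^\infty(G)$, the partial Fourier coefficient of $\Psi_a u$ with respect to the second variable is $\widehat{\Psi_a u}(x_1,\eta)_{rs}=e^{i\mu_r(\eta)A(x_1)}\,\widehat u(x_1,\eta)_{rs}$, and hence
$$
\doublehat{\,\Psi_a u\,}\!(\xi,\eta)_{mn_{rs}}=\int_{G_1}e^{i\mu_r(\eta)A(x_1)}\,\widehat u(x_1,\eta)_{rs}\,\overline{\xi(x_1)_{nm}}\,dx_1=:\widehat{g_\eta}(\xi)_{mn},
$$
where $g_\eta:=e^{i\mu_r(\eta)A(\cdot)}\,\widehat u(\cdot,\eta)_{rs}\in C^\infty(G_1)$.

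The heart of the proof will be estimating $\widehat{g_\eta}(\xi)_{mn}$. Since the $\overline{\xi_{nm}}$ are eigenfunctions of $I-\mathcal L_{G_1}$ with eigenvalue $\jp{\xi}^2$, integration by parts gives $|\widehat{g_\eta}(\xi)_{mn}|\leq\jp{\xi}^{-2k}\|(I-\mathcal L_{G_1})^kg_\eta\|_{L^\infty(G_1)}$ for every $k\in\N$, and by the Leibniz rule $(I-\mathcal L_{G_1})^kg_\eta$ is a finite sum of products $(\partial^\beta e^{i\mu_r(\eta)A})(\partial^\gamma_{x_1}\widehat u(\cdot,\eta)_{rs})$ with $|\beta|+|\gamma|\leq 2k$. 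For the first factor I would apply the Lemma preceding this statement (estimate \eqref{lemmaexp}) together with its Remark, with $z=i\mu_r(\eta)$ and $f=A$: since $A$ is real-valued and $\mu_r(\eta)\in\R$ one has $\mathrm{Re}\big(i\mu_r(\eta)A(x_1)\big)=0$, so $|\partial^\beta e^{i\mu_r(\eta)A(x_1)}|\leq C_\beta\big(1+|\mu_r(\eta)|\big)^{|\beta|}\leq C_\beta\jp{\eta}^{|\beta|}$. For the second factor I would use $\partial^\gamma_{x_1}\widehat u(x_1,\eta)_{rs}=\widehat{\partial^\gamma_{x_1}u}(x_1,\eta)_{rs}$ and integrate by parts in $x_2$ against $\overline{\eta_{sr}}$, obtaining for every $l\in\N$ a bound $|\partial^\gamma_{x_1}\widehat u(x_1,\eta)_{rs}|\leq C_{\gamma,l}(u)\jp{\eta}^{-2l}$ uniform in $x_1$, with $C_{\gamma,l}(u)$ controlled by finitely many seminorms of $u$. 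Combining, and using $|\beta|\leq 2k$, gives $\|(I-\mathcal L_{G_1})^kg_\eta\|_{L^\infty}\leq C_{k,l}(u)\,\jp{\eta}^{2k-2l}$; choosing $k$ and $l$ large relative to a prescribed $N$ and using $(\jp{\xi}+\jp{\eta})^N\leq 2^N\jp{\xi}^N\jp{\eta}^N$ then yields $|\doublehat{\,\Psi_a u\,}\!(\xi,\eta)_{mn_{rs}}|\leq C_N(u)(\jp{\xi}+\jp{\eta})^{-N}$ for every $N$, with $C_N(u)$ depending only on finitely many seminorms of $u$. By Proposition \ref{smoopartial} this shows $\Psi_a u\in C^\infty(G)$ and that $\Psi_a$ is continuous on $C^\infty(G)$; the same argument with $-A$ in place of $A$ proves the analogous statement for $\Psi_a^{-1}$.

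It then remains to assemble the pieces. Because $\Psi_a$ acts on second-variable partial Fourier coefficients by multiplication by $e^{i\mu_r(\eta)A(x_1)}$ and $\Psi_a^{-1}$ by $e^{-i\mu_r(\eta)A(x_1)}$, Fourier inversion gives $\Psi_a^{-1}\Psi_a=\Psi_a\Psi_a^{-1}=\mathrm{Id}$ on $C^\infty(G)$, so $\Psi_a$ is an automorphism of the Fréchet space $C^\infty(G)$. For $\DG$ I would extend $\Psi_a$ (and $\Psi_a^{-1}$) by the same series, now multiplying the distributional partial Fourier coefficients $\widehat u(\cdot,\eta)_{rs}\in\mathcal D'(G_1)$ by the smooth functions $e^{i\mu_r(\eta)A(\cdot)}$; to see that $\Psi_a u\in\DG$ one writes $\doublehat{\,\Psi_a u\,}\!(\xi,\eta)_{mn_{rs}}=\big\langle\widehat u(\cdot,\eta)_{rs},\,e^{i\mu_r(\eta)A(\cdot)}\,\overline{\xi_{nm}}\big\rangle_{G_1}$ and bounds it by the polynomial growth of $\doublehat{\,u\,}$ (which holds since $u\in\DG$, by Proposition \ref{smoopartial}) times a fixed-order $C^k(G_1)$-norm of the test function, the latter being polynomially bounded in $\jp{\xi}$ and $\jp{\eta}$ by \eqref{lemmaexp} together with the standard estimate $\|\partial^\beta\xi_{nm}\|_{L^\infty(G_1)}\leq C_\beta\jp{\xi}^{|\beta|}$. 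Thus $\Psi_a u\in\DG$, the same holds for $\Psi_a^{-1}$, and the identities $\Psi_a^{-1}\Psi_a=\Psi_a\Psi_a^{-1}=\mathrm{Id}$ persist on $\DG$ (by the same Fourier computation, or by density of $C^\infty(G)$ in $\DG$), so $\Psi_a$ is an automorphism of $\DG$ as well.

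The step I expect to be the main obstacle is the estimate in the second paragraph: one must show that the $\jp{\eta}$-growth generated by differentiating the oscillatory factor $e^{i\mu_r(\eta)A(x_1)}$ — at most polynomial, of degree equal to the number of derivatives, which is precisely where the reality of $A$ and of the eigenvalues $\mu_r(\eta)$ is used (it keeps $|e^{i\mu_r(\eta)A}|\equiv 1$, so no exponential factor appears) — is absorbed by the arbitrarily fast $\jp{\eta}$-decay of $\partial^\gamma_{x_1}\widehat u(x_1,\eta)_{rs}$ supplied by the smoothness of $u$, uniformly in $x_1$ and in all matrix indices. Once this is in hand, the remaining ingredients — the two integrations by parts, the Leibniz bookkeeping, and the passage to $\DG$ — are routine.
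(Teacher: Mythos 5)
Your proof is correct and takes essentially the same approach as the paper's: identify $\Psi_{-a}$ (i.e., $A$ replaced by $-A$) as the inverse, and use the Leibniz rule together with the estimate \eqref{lemmaexp} — where the reality of $A$ and of $\mu_r(\eta)$ gives $\mathrm{Re}(i\mu_r(\eta)A)=0$, so differentiating the exponential only costs powers of $\jp{\eta}$ — absorbed by the rapid decay coming from the smoothness of $u$. The only cosmetic difference is that the paper stops at the level of partial Fourier coefficients (invoking the characterization of smoothness from \cite{KMR19}), whereas you integrate by parts once more in $x_1$ to estimate the full coefficients $\doublehat{\,\Psi_a u\,}$ and conclude via Proposition \ref{smoopartial}.
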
	
\begin{proof}
To demonstrate this proposition we will use results about partial Fourier series developed on \cite{KMR19}.

First of all, notice that $\Psi_{-a}$ is the inverse of $\Psi_a$, therefore we only need to prove that $\Psi_a (C^\infty(G)) = C^{\infty}(G)$ and $\Psi_a (\DG) = \DG$.

Let $\beta \in \N_0$ and $u\in C^\infty(G)$. We will show that $\Psi_a u \in C^\infty(G)$. Notice that $\widehat{\Psi_a u}(x_1,\eta)_{rs} = e^{i\mu_r(\eta)A(x_1)}\widehat{u}(x_1,\eta)_{rs}$ and $\mu_r(\eta) A(x_1) \in \mathbb{R}$, for all $[\eta] \in \widehat{G_2}$, $1 \leq r \leq d_\eta$ and $x_1 \in G_1$. Using \eqref{lemmaexp} we obtain
\begin{align}
|\partial^\beta \widehat{\Psi_a u}(x_1,\eta)_{rs}| &= | \partial^\beta (e^{i\mu_r(\eta)A(x_1)}\widehat{u}(x_1,\eta)_{rs})| \nonumber \\
&= \left|\sum_{\beta'+\beta''=\beta } \partial^{\beta'}e^{i\mu_r(\eta)A(x_1)}\partial^{\beta''}\widehat{u}(x_1,\eta)_{rs} \right|\nonumber \\
&\leq \sum_{\beta'+\beta''=\beta}  \left|\partial^{\beta'} e^{i\mu_r(\eta)A(x_1)}\right| \left|\partial^{\beta''}\widehat{u}(x_1,\eta)_{rs}\right| \nonumber \\
&\stackrel{}{\leq} \sum_{\beta'+\beta''=\beta }C_{\beta'} |\mu_r(\eta)|^{|\beta '|}\left|\partial^{\beta''} \widehat{u}(x_1,\eta)_{rs}\right| \nonumber.
\end{align}
Since $u \in C^\infty(G)$ and $|\mu_r(\eta)|\leq \jp{\eta}$, it is easy to see that given $N>0$, there exists $C_{\beta N}$ such that
$$
|\partial^\beta \widehat{\Psi_a u}(x_1,\eta)_{rs}| \leq C_{\beta N} \jp{\eta}^{-N}.
$$
Therefore $\Psi_a u \in C^\infty(G)$. The distribution case is analogous.
\end{proof}
\begin{prop}\label{conjugation2}
Let $a \in C^\infty(G_1)$,  $a_0:= \int_{G_1} a(x_1) \, dx_1$, and consider the operator
	$
	L_{a_0} = X_1+a_0 X_2.
	$
	Assume that there exists $A \in C^\infty(G_1)$ such that $X_1A = a - a_0$.
	Then we have
	$$
	L_{a_0} \circ \Psi_{a} = \Psi_{a} \circ L_a
	$$
	in both $C^\infty(G)$ and in $\DG$, where $\Psi_a$ is given in \eqref{psialpha}.
\end{prop}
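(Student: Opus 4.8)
The plan is to verify the intertwining identity $L_{a_0}\circ\Psi_a = \Psi_a\circ L_a$ by a direct computation on the partial Fourier side with respect to the second variable. The key observation is that both operators act diagonally in the $\eta$-variable: for fixed $[\eta]\in\widehat{G_2}$ and $1\le r,s\le d_\eta$, the partial Fourier coefficient of $L_a u$ is
$$
\widehat{L_a u}(x_1,\eta)_{rs} = X_1\widehat{u}(x_1,\eta)_{rs} + i\mu_r(\eta)a(x_1)\widehat{u}(x_1,\eta)_{rs},
$$
while $\widehat{\Psi_a u}(x_1,\eta)_{rs} = e^{i\mu_r(\eta)A(x_1)}\widehat{u}(x_1,\eta)_{rs}$, and similarly $\widehat{L_{a_0} v}(x_1,\eta)_{rs} = X_1\widehat{v}(x_1,\eta)_{rs} + i\mu_r(\eta)a_0\widehat{v}(x_1,\eta)_{rs}$. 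So it suffices to show, for every $[\eta]$ and every $r,s$, the scalar identity
$$
\widehat{L_{a_0}(\Psi_a u)}(x_1,\eta)_{rs} = \widehat{\Psi_a(L_a u)}(x_1,\eta)_{rs}
$$
as functions of $x_1\in G_1$, and then invoke the fact that a smooth function (or distribution) on $G$ is determined by its partial Fourier coefficients with respect to $G_2$. Since $\Psi_a$ is an automorphism of both $C^\infty(G)$ and $\DG$ by Proposition \ref{autom}, all the objects in sight are well-defined.

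First I would compute the left-hand side. Writing $c:=i\mu_r(\eta)$ for brevity, we have
$$
\widehat{L_{a_0}(\Psi_a u)}(x_1,\eta)_{rs} = X_1\!\big(e^{cA(x_1)}\widehat{u}(x_1,\eta)_{rs}\big) + c\,a_0\,e^{cA(x_1)}\widehat{u}(x_1,\eta)_{rs}.
$$
Applying the Leibniz rule for the left-invariant vector field $X_1$ and then Lemma \ref{solution} (which gives $X_1 e^{cA} = c(X_1A)e^{cA} = c(a-a_0)e^{cA}$), the first term becomes
$$
c(a(x_1)-a_0)e^{cA(x_1)}\widehat{u}(x_1,\eta)_{rs} + e^{cA(x_1)}X_1\widehat{u}(x_1,\eta)_{rs}.
$$
Adding the $c a_0 e^{cA}\widehat u$ term, the $-ca_0$ and $+ca_0$ contributions cancel, leaving
$$
e^{cA(x_1)}\big(X_1\widehat{u}(x_1,\eta)_{rs} + c\,a(x_1)\widehat{u}(x_1,\eta)_{rs}\big) = e^{cA(x_1)}\widehat{L_a u}(x_1,\eta)_{rs},
$$
which is exactly $\widehat{\Psi_a(L_a u)}(x_1,\eta)_{rs}$. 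This establishes the identity on $C^\infty(G)$.

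For the distributional case I would argue by duality/density rather than repeating the computation: since $L_a$, $L_{a_0}$, $\Psi_a$, $\Psi_{-a}$ are all continuous operators on $\DG$ (the continuity of $L_a$, $L_{a_0}$ is clear, and that of $\Psi_{\pm a}$ is part of Proposition \ref{autom}), and since $C^\infty(G)$ is dense in $\DG$, the identity $L_{a_0}\Psi_a = \Psi_a L_a$, being valid on the dense subspace $C^\infty(G)$, extends to all of $\DG$ by continuity. Alternatively, one transposes: using $\jp{L_a u,\varphi}_G = -\jp{u,L_{-\bar a}\varphi}_G$-type pairings together with the explicit transpose of $\Psi_a$, one reduces the distributional identity to the already-proved smooth one applied to test functions. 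I do not expect any real obstacle here; the only point requiring a little care is the justification that $X_1$ commutes with the partial Fourier transform in $x_2$ and with the (locally uniformly convergent) series defining $\Psi_a$, so that the termwise computation above is legitimate — this is exactly the kind of partial-Fourier-series manipulation carried out in \cite{KMR19} and used already in the derivation of \eqref{phieq}, so it can be cited rather than redone.
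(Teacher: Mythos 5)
Your computation is correct and is essentially identical to the paper's own proof: both verify the identity on the partial Fourier coefficients in the second variable, apply the Leibniz rule together with Lemma \ref{solution} (equivalently \eqref{eqphi2}) to get $X_1 e^{i\mu_r(\eta)A} = i\mu_r(\eta)(a-a_0)e^{i\mu_r(\eta)A}$, and observe the cancellation that produces $e^{i\mu_r(\eta)A}\widehat{L_a u}(x_1,\eta)_{rs}$. The paper handles the distributional case by simply asserting the same computation carries over, whereas you offer a density/transposition argument; this is a cosmetic difference only.
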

\begin{proof}
Let us show that for any $u\in C^\infty(G)$ we have
$$
\widehat{L_{a_0}(\Psi_a u)}(x_1,\eta)_{rs} = \widehat{\Psi_a(L_a u)}(x_1,\eta)_{rs},$$
for all $x_1 \in G_1, \ [\eta] \in \widehat{G}_2, \ 1 \leq r,s \leq d_\eta.$

Indeed,
\begin{align*}
\widehat{L_{a_0}(\Psi_a u)}(x_1,\eta)_{rs} &= \widehat{X_1 \Psi_a u}(x_1,\eta)_{rs} + a_0\widehat{X_2\Psi_a u}(x_1,\eta)_{rs} \nonumber\\
&= X_1\widehat{\Psi_a u}(x_1,\eta)_{rs} + i\mu_r(\eta)a_0 \widehat{\Psi_a u}(x_1,\eta)_{rs} \nonumber \\
&= X_1(e^{i\mu_r(\eta)A(x_1)}\widehat{u}(x_1,\eta)_{rs}) + i\mu_r(\eta)a_0e^{i\mu_r(\eta)A(x_1)} \widehat{u}(x_1,\eta)_{rs} \\
&= (X_1e^{i\mu_r(\eta)A(x_1)})\widehat{u}(x_1,\eta)_{rs} + e^{i\mu_r(\eta)A(x_1)}(X_1\widehat{u}(x_1,\eta)_{rs}) \nonumber \\ & \quad + i\mu_r(\eta)a_0e^{i\mu_r(\eta)A(x_1)} \widehat{u}(x_1,\eta)_{rs}.
\end{align*}

By \eqref{eqphi2} we have
\begin{align*}
\widehat{L_{a_0}(\Psi_a u)}(x_1,\eta)_{rs} &= i\mu_r(\eta) (a(x_1)-a_0)e^{i\mu_r(\eta)A(x_1)}\widehat{u}(x_1,\eta)_{rs} \\ &\quad  + e^{i\mu_r(\eta)A(x_1)}(X_1\widehat{u}(x_1,\eta)_{rs}) \nonumber \\ &\quad + i\mu_r(\eta)a_0e^{i\mu_r(\eta)A(x_1)} \widehat{u}(x_1,\eta)_{rs} \nonumber \\
&= e^{i\mu_r(\eta)A(x_1)}(X_1\widehat{u}(x_1,\eta)_{rs}+i\mu_r(\eta)a(x_1) \widehat{u}(x_1,\eta)_{rs}) \nonumber\\
&= e^{i\mu_r(\eta)A(x_1)}\widehat{L_{a}u}(x_1,\eta)_{rs} \nonumber \\
&=\widehat{\Psi_a(L_a u)}(x_1,\eta)_{rs} \nonumber .
\end{align*}

The same is true when $u \in \DG$.
\end{proof}

\subsection{Global properties}

Recall that the operator $L_{a_0}$ is globally solvable when ${L_{a_0}(\DG) = \mathcal{K}_{a_0}}$, where
\begin{align*}
  \mathcal{K}_{a_0} := \{w \in \DG; \doublehat{{\, w\,}}(\xi,\eta)_{mn_{rs}} \!= 0  \mbox{ whenever } \lambda_m(\xi) + a_0 \mu_r(\eta) = 0, \\ \mbox{for all} \, 1\leq m,n \leq d_\xi, 1 \leq r,s \leq d_\eta \}.
\end{align*}
We will say that $L_a$ is globally solvable if $L_a(\DG) = \mathcal{J}_{a}$, where
$$
\mathcal{J}_{a} := \{v \in \DG; \Psi_{-a}v \in \mathcal{K}_{a_0} \}.
$$

\begin{prop}\label{ghevolution}
	The operator $L_a$ is globally hypoelliptic (resp. globally hypoelliptic modulo $\ker L_a$) if and only if $L_{a_0}$ is globally hypoelliptic (resp. globally hypoelliptic modulo $\ker L_{a_0}$). Similarly, the operator $L_a$ is globally solvable if and only if $L_{a_0}$ is globally solvable.
\end{prop}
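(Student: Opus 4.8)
The proposition follows formally from the two facts already established: $\Psi_a$ is an automorphism of both $C^\infty(G)$ and $\DG$ with inverse $\Psi_{-a}$ (Proposition \ref{autom}), and it intertwines the two operators, $L_{a_0}\circ\Psi_a=\Psi_a\circ L_a$ (Proposition \ref{conjugation2}); composing this identity on both sides with $\Psi_{-a}$ also gives $L_a\circ\Psi_{-a}=\Psi_{-a}\circ L_{a_0}$. The plan is simply to transport each of the three properties across this conjugation, mimicking almost verbatim the arguments of Proposition \ref{conjugation}(2)--(3) and of Proposition \ref{solvability2}, with $\Psi_a$ now playing the role that $e^{\pm Q}$ played there.

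For global hypoellipticity: if $L_{a_0}$ is globally hypoelliptic and $u\in\DG$ satisfies $L_a u=f\in C^\infty(G)$, then $\Psi_a u\in\DG$ and $L_{a_0}(\Psi_a u)=\Psi_a(L_a u)=\Psi_a f\in C^\infty(G)$ since $\Psi_a$ preserves smoothness; global hypoellipticity of $L_{a_0}$ forces $\Psi_a u\in C^\infty(G)$, and applying $\Psi_{-a}$ (again smoothness-preserving) yields $u\in C^\infty(G)$. The reverse implication is obtained in the same way with the roles of $\Psi_a$ and $\Psi_{-a}$ exchanged, using $L_a\circ\Psi_{-a}=\Psi_{-a}\circ L_{a_0}$. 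For global hypoellipticity modulo kernel one needs, in addition, only the identity $\Psi_a(\ker L_a)=\ker L_{a_0}$ (equivalently $\Psi_{-a}(\ker L_{a_0})=\ker L_a$), which is immediate from the intertwining relation and the bijectivity of $\Psi_a$: if $L_{a_0}$ is globally hypoelliptic modulo $\ker L_{a_0}$ and $L_a u=f\in C^\infty(G)$, then $L_{a_0}(\Psi_a u)=\Psi_a f\in C^\infty(G)$, so there is $w\in C^\infty(G)$ with $\Psi_a u-w\in\ker L_{a_0}$; applying $\Psi_{-a}$ gives $u-\Psi_{-a}w\in\ker L_a$ with $\Psi_{-a}w\in C^\infty(G)$, exactly as required, and the converse is symmetric.

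For global solvability, the argument is the exact analogue of Proposition \ref{solvability2}. Since $\Psi_a$ is a bijection of $\DG$ and $L_a=\Psi_{-a}\circ L_{a_0}\circ\Psi_a$, one has $L_a(\DG)=\Psi_{-a}\bigl(L_{a_0}(\DG)\bigr)$; and by the very definition of $\mathcal{J}_a$ one likewise has $\mathcal{J}_a=\Psi_{-a}(\mathcal{K}_{a_0})$. As $\Psi_{-a}$ is injective, $L_a(\DG)=\mathcal{J}_a$ holds if and only if $L_{a_0}(\DG)=\mathcal{K}_{a_0}$, that is, if and only if $L_{a_0}$ is globally solvable.

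I do not expect a genuine obstacle in this proposition: the substantive work has already been carried out---namely, that $\Psi_{\pm a}$ maps $C^\infty(G)$ into itself and $\DG$ into itself (which rests on the exponential estimate of the lemma preceding Proposition \ref{autom}) and that $\Psi_a$ intertwines $L_a$ and $L_{a_0}$. What is left is purely formal, and the only thing that requires care is the bookkeeping: keeping consistent track of which of $\Psi_a$ and $\Psi_{-a}$ enters at each step, and of the identification of $\ker L_a$ and $\mathcal{J}_a$ with the corresponding objects for $L_{a_0}$.
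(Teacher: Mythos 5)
Your argument is correct and is exactly the proof the paper intends: the paper's own ``proof'' consists of the single remark that it is analogous to Propositions \ref{conjugation} and \ref{solvability2}, and you have carried out that analogy faithfully, using Proposition \ref{autom} (that $\Psi_{\pm a}$ are mutually inverse automorphisms of $C^\infty(G)$ and of $\DG$) and Proposition \ref{conjugation2} (the intertwining $L_{a_0}\circ\Psi_a=\Psi_a\circ L_a$), including the needed identity $\Psi_a(\ker L_a)=\ker L_{a_0}$. One bookkeeping remark: in the solvability part you take $\mathcal{J}_a=\Psi_{-a}(\mathcal{K}_{a_0})$, which is what the necessary condition $L_{a_0}(\Psi_a u)=\Psi_a f$ forces and what makes the equivalence with $L_{a_0}(\DG)=\mathcal{K}_{a_0}$ go through, whereas the paper's displayed definition $\mathcal{J}_a=\{v:\Psi_{-a}v\in\mathcal{K}_{a_0}\}$ literally describes $\Psi_a(\mathcal{K}_{a_0})$ --- the sign there appears to be a slip in the text rather than a gap in your proof.
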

\begin{proof}
The proof is analogous to the demonstration of Propositions \ref{conjugation} and \ref{solvability2}.
\end{proof}

\begin{cor}
	If $L_a$ is globally hypoelliptic, then $L_a$ is globally solvable.
\end{cor}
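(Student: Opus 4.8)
The plan is to obtain this as a formal consequence of the normal-form machinery, reducing everything to the constant-coefficient vector field $L_{a_0}=X_1+a_0X_2$ with $a_0=\int_{G_1}a(x_1)\,dx_1\in\R$. The point is that Proposition~\ref{ghevolution} already transports each of the three properties (global hypoellipticity, global hypoellipticity modulo kernel, global solvability) back and forth between $L_a$ and $L_{a_0}$ via conjugation with $\Psi_a$, and that for the constant-coefficient operator $L_{a_0}$ the implication ``globally hypoelliptic $\Longrightarrow$ globally solvable'' is exactly the corollary following Theorem~\ref{solvability} (which holds because condition \eqref{hypothesis} in Theorem~\ref{thm1} coincides with condition \eqref{condition1} in Theorem~\ref{solvability}).

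Concretely, I would argue as follows. Assume $L_a$ is globally hypoelliptic. By Proposition~\ref{ghevolution}, $L_{a_0}$ is globally hypoelliptic. Since $a_0\in\R$, the operator $L_{a_0}=X_1+a_0X_2$ is a constant-coefficient vector field in the sense of Section~3, so the corollary following Theorem~\ref{solvability} applies and $L_{a_0}$ is globally solvable. Invoking the ``solvability'' half of Proposition~\ref{ghevolution} once more, we conclude that $L_a$ is globally solvable. An equivalent route would be to mimic the proof of that corollary directly at the level of $L_a$, but since the characterizations of the global properties of $L_a$ are themselves produced in Propositions~\ref{conjugation2}--\ref{ghevolution} by the conjugation $L_{a_0}\circ\Psi_a=\Psi_a\circ L_a$, the reduction above is the cleanest presentation.

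I do not expect a genuine obstacle: all the work has been done in the preceding propositions. The only point deserving a word of care is that the very notion of global solvability of $L_a$ presupposes the existence of $A\in C^\infty(G_1)$ with $X_1A=a-a_0$, so that $\Psi_a$ (hence $\mathcal{J}_a$) is defined; this hypothesis is in force throughout the subsection and is precisely what makes Proposition~\ref{conjugation2}, and therefore Proposition~\ref{ghevolution}, available. Granting it, the corollary is immediate.
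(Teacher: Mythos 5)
Your proof is correct and is exactly the argument the paper intends: reduce to the constant-coefficient normal form $L_{a_0}$ via Proposition~\ref{ghevolution}, apply the corollary of Theorem~\ref{solvability} there, and transport solvability back. The remark about needing $A\in C^\infty(G_1)$ with $X_1A=a-a_0$ for $\Psi_a$ and $\mathcal{J}_a$ to be defined is also the right caveat, matching the standing hypothesis of the section.
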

\begin{cor}
	The operator $L_a$ is globally hypoelliptic modulo $\ker L_{a}$ if and only if $L_a$ is globally solvable.
\end{cor}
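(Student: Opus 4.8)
The plan is to derive this corollary formally, by chaining two equivalences already available in the paper and passing through the normal form $L_{a_0} = X_1 + a_0 X_2$, where $a_0 = \int_{G_1} a(x_1)\,dx_1 \in \R$.

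First I would note that $L_{a_0}$ is a constant-coefficient vector field of exactly the kind treated in Sections 3 and 4, so Proposition \ref{modker} applies to it verbatim: $L_{a_0}$ is globally hypoelliptic modulo $\ker L_{a_0}$ if and only if $L_{a_0}$ is globally solvable. Then I would invoke Proposition \ref{ghevolution}, which --- via the conjugation identity $L_{a_0} \circ \Psi_a = \Psi_a \circ L_a$ of Proposition \ref{conjugation2} and the fact that $\Psi_a$ is an automorphism of $C^\infty(G)$ and of $\DG$ (Proposition \ref{autom}) --- transfers each of the two properties between $L_a$ and $L_{a_0}$: namely, $L_a$ is globally hypoelliptic modulo $\ker L_a$ iff $L_{a_0}$ is globally hypoelliptic modulo $\ker L_{a_0}$, and $L_a$ is globally solvable iff $L_{a_0}$ is globally solvable. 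Concatenating the three equivalences, $L_a$ globally hypoelliptic modulo $\ker L_a$ $\iff$ $L_{a_0}$ globally hypoelliptic modulo $\ker L_{a_0}$ $\iff$ $L_{a_0}$ globally solvable $\iff$ $L_a$ globally solvable, which is the claim.

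I do not expect any genuine obstacle: the substance is contained entirely in the normal-form machinery (Propositions \ref{autom}, \ref{conjugation2}, \ref{ghevolution}) and in the constant-coefficient result (Proposition \ref{modker}), all already established; the corollary is a purely formal consequence. The only thing to keep in mind is that this subsection --- in particular the very definition of $\Psi_a$ and of global solvability for $L_a$ --- presupposes the existence of $A \in C^\infty(G_1)$ with $X_1 A = a - a_0$; since that is a standing hypothesis here, nothing further needs to be checked.
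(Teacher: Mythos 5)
Your argument is correct and is exactly the route the paper intends: the corollary follows by chaining Proposition \ref{modker} (for the constant-coefficient normal form $L_{a_0}$) with the two equivalences of Proposition \ref{ghevolution}. Your remark about the standing hypothesis $X_1A = a - a_0$ is also apt, since the normal-form machinery depends on it.
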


\begin{ex}\label{examplet1g2}
	
	Let $G=\mathbb{T}^1 \times G_2$, where $G_2$ is a compact Lie group and $a \in C^\infty(\mathbb{T}^1)$ a real-valued function. Let
	$$L_a := \partial_t + a(t) X_2,$$
	where $X_2 \in \mathfrak{g}_2$, $$a_0= \frac{1}{2\pi} \int_{0}^{2\pi} a(t)\, dt,$$ and define
	$$A(t)=\int_{0}^{t} a(s) \, ds - a_0t.$$
	Notice that $\partial_t A(t) = a(t)-a_0$, for all $t \in \mathbb{T}^1$. Taking the Fourier coefficients with respect to the second variable, we obtain
	$$
	\partial_t\widehat{u}(t,\eta)_{rs}+ia(t)\mu_r(\eta)\widehat{u}(t,\eta)_{rs}=\widehat{f}(t,\eta)_{rs}.
	$$
	
	Define the operator $\Psi_a$ as 
	$$\Psi_a  u(t,x_2) := \sum_{[\eta]\in\widehat{G_2}} d_\eta \sum_{r,s=1}^{d_\eta} e^{i\mu_r(\eta)A(t)}\widehat{u}(t,\eta)_{rs}\, \eta_{sr}(x_2).
	$$
	By Proposition \ref{autom}, the operator $\Psi_a$ is an automorphism of $C^\infty(G)$ and of $\DG$ and it holds
	$$
	\Psi_a\circ L = L_{a_0}\circ \Psi_a.
	$$
	Thus, the operator $L_{a}$ is globally hypoelliptic if and only if $L_{a_0}$ is globally hypoelliptic. Moreover, the operator $L_{a}$ is globally solvable if and only if $L_{a_0}$ is globally solvable. 
	
	For instance, for $a(t)=\sin(t)+\sqrt{2}$, we have $a_0=\sqrt{2}$ and $A(t)=-\cos(t)$. 
	Take $G_2=\mathbb{T}^1$. We know by Example \ref{exampleliouville} that the operator
	$$
	L_{a_0} = \partial_t + \sqrt{2}\partial_x 
	$$
	is globally hypoelliptic, because $\sqrt{2}$ is an irrational non-Liouville number. Hence,
	$$
	L_a = \partial_t + (\sin(t)+\sqrt{2})\partial_x
	$$
	is globally hypoelliptic and then globally solvable. 
	
	Take now $G_2=\St$. By Example \ref{examples3}, we know that
	$$
	L_{a_0} = \partial_t +\sqrt{2}X
	$$
	is not globally hypoelliptic but it is globally solvable. Therefore
	$$
	L_a = \partial_t+(\sin(t)-\sqrt{2})X
	$$
	is not globally hypoelliptic and it is globally solvable. 
\end{ex} 

\begin{rem}
	We had supposed that given a function $a \in C^\infty(G_1)$ there exists a function $A \in C^\infty(G_1)$ and $a_0 \in \mathbb{R}$ such that $X_1 A = a - a_0,$ that is, $X_1$ is $C^\infty$--cohomology free on $G_1$ (see Definition \ref{CH}).
	
	\begin{conjecture}[Katok]
		If a closed, connected, orientable manifold $M$ admits a $C^\infty$--cohomology free vector field $X$, then $M$ is diffeomorphic to a torus and $X$ is smoothly conjugate to a Diophantine vector field.
	\end{conjecture}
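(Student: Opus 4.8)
The plan is to reduce the statement to the Greenfield--Wallach Conjecture \ref{GW} and then to attack that conjecture through the interplay between the dynamics of the flow of $X$ and the solvability of the cohomological equation. By Forni's Theorem \ref{fornithm}, a vector field $X$ on a closed connected orientable manifold $M$ is $C^\infty$--cohomology free if and only if it is globally hypoelliptic, so it suffices to prove that a closed connected orientable manifold carrying a globally hypoelliptic vector field $X$ must be diffeomorphic to a torus, with $X$ smoothly conjugate to a constant Diophantine vector field. First I would extract the standard dynamical consequences of cohomology-freeness: $X$ is nowhere zero, its flow is minimal and uniquely ergodic (one tests $Xu = f - c(f)$ against invariant measures and invariant currents to exclude extra invariant objects), and it admits a smooth invariant volume (solve the cohomological equation for the divergence of $X$ and rescale the volume form accordingly). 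The decisive step would then be to prove that the Koopman flow on $L^2(M)$ has pure point spectrum; its eigenfunctions are automatically smooth by global hypoellipticity and, being nowhere vanishing, they define a free action of a torus $\mathbb{T}^k$ whose orbit closures contain the orbits of $X$. Minimality forces $k = \dim M$, so the eigenfunctions become global angular coordinates in which $X$ is constant, and the Diophantine property of the frequencies follows from the quantitative solvability estimate, exactly as in the $\mathbb{T}^2$ computation of Example \ref{exampleliouville}.

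In the situation that actually arises in this paper --- a left-invariant $X_1 \in \mathfrak{g}_1$ on a compact connected Lie group $G_1$ --- this program can be carried through completely. Running the argument of Proposition \ref{GHtrivial} with a single group shows that global hypoellipticity of $X_1$ forces the set $\mathcal{Z}$ of Question \ref{question} to consist of the trivial representation alone: if some nontrivial $[\phi]$ had $\lambda_m(\phi) = 0$, one tests the cohomological equation against the matrix coefficient $\phi_{nm}$ and reaches a contradiction. On the other hand, by the Peter--Weyl theorem and the highest-weight parametrization of $\widehat{G_1}$, after conjugating $X_1$ into a maximal torus $\mathfrak{t} \subset \mathfrak{g}_1$ the numbers $\lambda_m(\phi)$ are the values at $X_1$ of the weights of $\phi$, so $[\phi] \in \mathcal{Z}$ precisely when $\phi$ carries a weight annihilating $X_1$. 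If $G_1$ is non-abelian, its semisimple part has a simple ideal with highest root $\theta$, and the irreducible representations with highest weight $n\theta$ (extended trivially on the center and on the other factors) all contain the zero weight --- hence lie in $\mathcal{Z}$ --- and have dimension tending to infinity; thus $\mathcal{Z}$ is infinite. This contradicts the conclusion that $\mathcal{Z}$ is trivial, so $G_1$ must be abelian, i.e. a torus, and the Diophantine classification of globally hypoelliptic constant vector fields on the torus is the one recovered in Example \ref{exampleliouville}.

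The step I expect to be the genuine obstacle is the general, arbitrary-manifold case of the reduction in the first paragraph. Matching the functional-analytic input --- smoothness of $u$ from smoothness of $Xu - c(f)$ --- with a classification of the orbit foliation of $X$ is precisely what has kept Greenfield--Wallach, and hence Katok's conjecture, open in dimensions $\geq 4$: a priori the flow of $X$ need be neither algebraic nor homogeneous, and there is currently no mechanism that upgrades a pure point Koopman spectrum with smooth, nonvanishing eigenfunctions to a genuine smooth trivialization of $M$ as a torus. In other words, establishing the pure point spectrum and then realizing the eigenfunctions as coordinates is essentially equivalent to the conjecture itself, which is why --- outside the Lie-group case treated above and the low-dimensional cases handled by Forni --- the problem remains open.
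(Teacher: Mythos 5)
The statement you were asked to prove is not a theorem of the paper: it is Katok's conjecture, stated verbatim as Conjecture 6.9 in the remark at the end of Section 6, and the paper offers no proof of it. The authors only record Forni's result that it is equivalent to the Greenfield--Wallach conjecture (Conjecture \ref{GW}, via Theorem \ref{fornithm}) and note that it has been verified only in dimensions $2$ and $3$ and in some special cases; they then \emph{assume} the existence of the primitive $A$ with $X_1A=a-a_0$ as a hypothesis precisely because the conjecture is open. Your proposal correctly performs the reduction to Greenfield--Wallach and, to your credit, explicitly concedes in the final paragraph that the decisive step (upgrading a pure point Koopman spectrum with smooth nonvanishing eigenfunctions to a smooth trivialization of $M$ as a torus) is exactly the open problem. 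So what you have written is a program plus an admission that it cannot currently be completed, not a proof; since no proof exists in the literature or in the paper, that is the only honest outcome, but it should be labeled as such rather than as a proof proposal.

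Your second paragraph, on the other hand, contains a genuinely correct and self-contained argument for the special case relevant to this paper, and it goes beyond what the authors establish. Running the matrix-coefficient test of Proposition \ref{GHtrivial} for a single group shows that global hypoellipticity of a left-invariant $X$ forces the set $\mathcal{Z}$ of Question \ref{question} to reduce to the trivial representation, while your weight-theoretic observation --- that on a non-abelian compact connected group the irreducible representations with highest weight a multiple of the highest root of a simple ideal all contain the zero weight, lie in the root lattice (hence descend to $G$), and have unbounded dimension --- shows $\mathcal{Z}$ is infinite unless $G$ is a torus. This actually answers Question \ref{question} in the negative and settles the Katok/Greenfield--Wallach conjecture for left-invariant vector fields on compact connected Lie groups, which is more than the paper claims. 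But it does not touch the conjecture as stated, which concerns arbitrary closed connected orientable manifolds and arbitrary smooth vector fields; for that generality there is no argument to assess, and none was expected.
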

	
	In \cite{F08}, G. Forni has proved that the Katok's conjecture is equivalent to the Greenfield's and Wallach's conjecture, mentioned in the previous chapter (see Conjecture \ref{GW}). In view of the proof of this conjecture in dimensions 2 and 3, and its probable validity in higher dimensions, it is necessary to add in the hypothesis the existence of such $A$ satisfying $X_1A=a-a_0$. Otherwise, the above results would be valid only for the case where $G_1$ is a torus.
\end{rem}
\subsection{Perturbations}

We can now combine what was made in Section \ref{addconstant} to study the operator
$$
L_{aq}=X_1+a(x_1)X_2+q(x_1,x_2),
$$
where $X_1 \in \mathfrak{g}_1$, $X_2 \in \mathfrak{g}_2$,  $a \in C^\infty(G_1)$ is a real-valued function, and $q \in C^\infty(G)$. Furthermore, we will assume that there exists $Q \in C^\infty(G)$ satisfying 
$$(X_1+a(x_1)X_2)Q = q - q_0.$$

By Proposition \ref{conjugation} we have
$$
L_{a q } \circ e^{-Q} = e^{-Q}  \circ L_{a q_0},
$$
where $L_{a q_0} = X_1+a(x_1)X_2+q_0$.

It follows from Proposition \ref{conjugation2} that
$$
L_{a_0 q_0} \circ \Psi_a = \Psi_a \circ L_{a q_0},
$$
where $L_{a_0 q_0}=X_1+a_0X_2+q_0$. Thus,
\begin{equation*}
L_{a q } \circ e^{-Q} \circ \Psi_a = e^{-Q} \circ L_{a q_0} \circ \Psi_a= e^{-Q } \circ \Psi_a \circ L_{a_0 q_0}.
\end{equation*}
We say that $L_{aq}$ is globally solvable if $L_{a q} (\DG) = \mathcal{J}_{aq}$, where
$$
\mathcal{J}_{a q} := \{ v\in \DG; \ \Psi_{-a} e^{Q}v \in \mathcal{K}_{a_0q_0} \}
$$
and
\begin{align*}
\mathcal{K}_{a_0 q_0} := \{w \in \DG; \doublehat{\,w\,}(\xi,\eta)_{mn_{rs}}\!=0 \mbox{ whenever }\lambda_m(\xi)+a_0\mu_r(\eta) - iq_0 = 0, \\ \mbox{for all} \, 1\leq m,n \leq d_\xi, 1 \leq r,s \leq d_\eta\}.
\end{align*}
The next results are consequences of what was done previously.

\begin{prop}
	The operator $L_{aq}$ is globally hypoelliptic (resp. globally hypoelliptic modulo $\ker L_{aq}$) if and only if $L_{a_0q_0}$ is globally hypoelliptic (resp. globally hypoelliptic modulo $\ker L_{a_0q_0}$). Similarly, the operator $L_{aq}$ is globally solvable if and only if $L_{a_0q_0}$ is globally solvable.
\end{prop}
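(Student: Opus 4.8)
The plan is to repackage the three intertwining relations established just before the statement as conjugation by a single automorphism, and then run the standard transfer arguments used in the proofs of Propositions \ref{conjugation}, \ref{solvability2}, and \ref{ghevolution}.

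First I would set $T := e^{-Q}\circ\Psi_a$. Multiplication by $e^{-Q}$ is an automorphism of $C^\infty(G)$ because $e^{-Q},e^{Q}\in C^\infty(G)$, and hence, acting by duality, of $\DG$ as well; together with Proposition \ref{autom}, which gives that $\Psi_a$ is an automorphism of $C^\infty(G)$ and of $\DG$ with inverse $\Psi_{-a}$, it follows that $T$ is an automorphism of both $C^\infty(G)$ and $\DG$, with $T^{-1}=\Psi_{-a}\circ e^{Q}$. The chain of equalities displayed before the statement is exactly
$$
L_{aq}\circ T \;=\; T\circ L_{a_0q_0}
$$
on $C^\infty(G)$ and on $\DG$, that is, $L_{aq}=T\circ L_{a_0q_0}\circ T^{-1}$.

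Next I would transfer each property through $T$. For global hypoellipticity: if $L_{a_0q_0}$ is globally hypoelliptic and $u\in\DG$ satisfies $L_{aq}u=f\in C^\infty(G)$, then $L_{a_0q_0}(T^{-1}u)=T^{-1}f\in C^\infty(G)$, so $T^{-1}u\in C^\infty(G)$ and therefore $u=T(T^{-1}u)\in C^\infty(G)$; the reverse implication is symmetric, since $T^{-1}$ is an automorphism of the same two spaces intertwining $L_{aq}$ and $L_{a_0q_0}$ in the opposite direction. For global hypoellipticity modulo kernel: note first that $T$ maps $\ker L_{a_0q_0}$ bijectively onto $\ker L_{aq}$; then, given $u\in\DG$ with $L_{aq}u\in C^\infty(G)$, apply the modulo-kernel property of $L_{a_0q_0}$ to $T^{-1}u$ to produce $v\in C^\infty(G)$ with $T^{-1}u-v\in\ker L_{a_0q_0}$, whence $u-Tv\in\ker L_{aq}$ with $Tv\in C^\infty(G)$; this is symmetric as well. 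For global solvability: from the definition, $\mathcal{J}_{aq}=\{\,v\in\DG;\ \Psi_{-a}e^{Q}v\in\mathcal{K}_{a_0q_0}\,\}=\{\,v\in\DG;\ T^{-1}v\in\mathcal{K}_{a_0q_0}\,\}=T(\mathcal{K}_{a_0q_0})$, while $L_{aq}(\DG)=T\bigl(L_{a_0q_0}(\DG)\bigr)$; since $T$ is a bijection, $L_{aq}(\DG)=\mathcal{J}_{aq}$ holds if and only if $L_{a_0q_0}(\DG)=\mathcal{K}_{a_0q_0}$, i.e. $L_{aq}$ is globally solvable if and only if $L_{a_0q_0}$ is.

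I do not expect a real obstacle: once the three relations are rewritten as conjugation by the single automorphism $T$, the statement becomes pure bookkeeping, identical in form to the transfer scheme already carried out in Propositions \ref{conjugation} and \ref{solvability2}. The only points needing a line of care are checking that multiplication by $e^{-Q}$ is an automorphism of $\DG$ (immediate from invertibility of $e^{-Q}$ within $C^\infty(G)$ together with duality) and keeping the order of composition in $T^{-1}=\Psi_{-a}\circ e^{Q}$ consistent with the definition of $\mathcal{J}_{aq}$, since $e^{-Q}$ and $\Psi_a$ need not commute.
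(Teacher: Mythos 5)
Your proposal is correct and follows essentially the same route as the paper, which simply declares the proposition a consequence of the displayed relation $L_{aq}\circ e^{-Q}\circ\Psi_a=e^{-Q}\circ\Psi_a\circ L_{a_0q_0}$ together with the transfer arguments of Propositions \ref{conjugation}, \ref{solvability2} and \ref{ghevolution}. Packaging the conjugation into the single automorphism $T=e^{-Q}\circ\Psi_a$ and checking $\mathcal{J}_{aq}=T(\mathcal{K}_{a_0q_0})$ is exactly the bookkeeping the paper leaves implicit.
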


\begin{cor}
	If $L_{aq}$ is globally hypoelliptic, then $L_{a_0q_0}$ is globally solvable.
\end{cor}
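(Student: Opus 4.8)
The plan is to transport the hypothesis from $L_{aq}$ to the normalized operator $L_{a_0q_0}=X_1+a_0X_2+q_0$ by means of the conjugation identities already assembled, and then to read off global solvability from the constant–perturbation results of Section~\ref{addconstant}. Recall that $\Psi_a$ (Proposition~\ref{autom}) and $e^{-Q}$ are automorphisms of both $C^\infty(G)$ and $\DG$, and that
$$
L_{aq}\circ e^{-Q}\circ\Psi_a \;=\; e^{-Q}\circ\Psi_a\circ L_{a_0q_0}.
$$
By the preceding Proposition this equivalence already gives: if $L_{aq}$ is globally hypoelliptic, then $L_{a_0q_0}$ is globally hypoelliptic. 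So the whole problem reduces to showing that a \emph{globally hypoelliptic} operator of the form $L_{a_0q_0}$ is automatically globally solvable.

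Next I would observe that $L_{a_0q_0}$ is precisely an operator $X+q$ of the type treated in Section~\ref{addconstant}, with $X:=X_1+a_0X_2\in\mathfrak{g}$ (note $a_0\in\R$) and $q:=q_0\in\C$. Every $[\phi]\in\widehat{G}$ has the form $[\xi\otimes\eta]$ with $[\xi]\in\widehat{G_1}$ and $[\eta]\in\widehat{G_2}$, and in the bases diagonalizing $\sigma_{X_1}(\xi)$ and $\sigma_{X_2}(\eta)$ the symbol
$$
\sigma_{X}(\xi\otimes\eta)=\sigma_{X_1}(\xi)\otimes I + a_0\,I\otimes\sigma_{X_2}(\eta)
$$
is diagonal with real entries $\lambda_m(\xi)+a_0\mu_r(\eta)$ (exactly the quantity computed in Section~3). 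Hence Theorems~\ref{thmper} and~\ref{thmper2} apply verbatim, and I would invoke Corollary~\ref{implication}: global hypoellipticity of $X+q$ forces the lower bound \eqref{hypothesisper}, which is exactly the condition characterizing the global solvability of $X+q$ in Theorem~\ref{thmper2}. Therefore $L_{a_0q_0}$ is globally solvable, which is the assertion.

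I do not expect a genuine obstacle here: the statement is a short consequence of results already established. The only point deserving a little care is the identification in the second step — that $X_1+a_0X_2$ may legitimately be treated as a single left-invariant vector field on $G=G_1\times G_2$ whose symbol is jointly diagonalizable with the prescribed eigenvalues — but this is immediate from the tensor-product description of $\widehat{G_1\times G_2}$ and the additivity $\sigma_{X_1}\otimes I+a_0\,I\otimes\sigma_{X_2}$ recorded above. Once this is in place, the corollary follows by combining the preceding Proposition with Corollary~\ref{implication} (equivalently, by simply comparing the conditions appearing in Theorems~\ref{thmper} and~\ref{thmper2}).
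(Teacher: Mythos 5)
Your argument is correct and follows essentially the same route the paper intends (the paper leaves this corollary as an immediate "consequence of what was done previously"): reduce via the conjugation $L_{aq}\circ e^{-Q}\circ\Psi_a = e^{-Q}\circ\Psi_a\circ L_{a_0q_0}$ and the preceding Proposition to the global hypoellipticity of $L_{a_0q_0}$, then apply the constant-perturbation results of Section~\ref{addconstant} to $X=X_1+a_0X_2$ (legitimate since $a_0\in\R$ and the symbol is jointly diagonalized on $\xi\otimes\eta$ with eigenvalues $\lambda_m(\xi)+a_0\mu_r(\eta)$). Your explicit verification of that identification is a worthwhile detail the paper glosses over, but it is not a different method.
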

\begin{cor}
	The operator $L_{aq}$ is hypoelliptic modulo $\ker L_{aq}$ if and only if $L_{a_0q_0}$ is globally solvable.
\end{cor}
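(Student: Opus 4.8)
The plan is to reduce the statement to the constant-coefficient case and then invoke the results of Section~\ref{addconstant}. Recall the conjugation identity already obtained,
$$
L_{aq}\circ e^{-Q}\circ\Psi_a = e^{-Q}\circ\Psi_a\circ L_{a_0q_0},
$$
in which $T:=e^{-Q}\circ\Psi_a$ is an automorphism of both $C^\infty(G)$ and $\DG$: the factor $\Psi_a$ is an automorphism by Proposition~\ref{autom}, and multiplication by the nowhere-vanishing function $e^{-Q}\in C^\infty(G)$ is an automorphism with inverse multiplication by $e^{Q}$. Thus $T$ maps $\ker L_{a_0q_0}$ bijectively onto $\ker L_{aq}$ while preserving $C^\infty(G)$, which is exactly the mechanism of Propositions~\ref{conjugation} and \ref{conjugation2}. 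Consequently, by the Proposition established above in this subsection (asserting that $L_{aq}$ and $L_{a_0q_0}$ simultaneously enjoy global hypoellipticity modulo kernel), it suffices to prove that $L_{a_0q_0}$ is globally hypoelliptic modulo its kernel if and only if it is globally solvable.

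Now $L_{a_0q_0}=(X_1+a_0X_2)+q_0$ is a zero-order perturbation by the constant $q_0\in\C$ of the constant-coefficient vector field $X_1+a_0X_2$ on $G=G_1\times G_2$. For each $[\xi]\in\widehat{G_1}$ and $[\eta]\in\widehat{G_2}$, the symbol of $X_1+a_0X_2$ at $\xi\otimes\eta$ is diagonal with entries $i(\lambda_m(\xi)+a_0\mu_r(\eta))$, so the symbol of $L_{a_0q_0}$ is diagonal with entries $i(\lambda_m(\xi)+a_0\mu_r(\eta)-iq_0)$ and
$$
\doublehat{\,L_{a_0q_0}u\,}(\xi,\eta)_{mn_{rs}}=i\bigl(\lambda_m(\xi)+a_0\mu_r(\eta)-iq_0\bigr)\doublehat{\,u\,}(\xi,\eta)_{mn_{rs}}.
$$
This is precisely the framework of Section~\ref{addconstant}, with the single vector field $X$ there replaced by $X_1+a_0X_2$ and the dual variable $\xi$ replaced by $\xi\otimes\eta$, and with the space of admissible distributions equal to $\mathcal{K}_{a_0q_0}$. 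Running the argument of Proposition~\ref{modker} (equivalently, its constant-perturbation analogue from Section~\ref{addconstant}) verbatim in this setting gives: $L_{a_0q_0}$ is globally hypoelliptic modulo $\ker L_{a_0q_0}$ if and only if $L_{a_0q_0}$ is globally solvable. Combining this with the reduction of the previous paragraph yields the corollary.

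The only point requiring a word of care is that the vector field $X_1+a_0X_2$ is not normalized by the Killing form of $G$, whereas Section~\ref{addconstant} was phrased for a Killing-normalized vector field. This causes no trouble: the normalization there was used solely to ensure that the relevant symbol eigenvalues are real, and here this is already guaranteed by $a_0\in\R$ together with $\lambda_m(\xi),\mu_r(\eta)\in\R$. I expect this verification to be the only, and quite mild, obstacle; everything else is a direct transcription of Propositions~\ref{modker}, \ref{conjugation}, and \ref{conjugation2}.
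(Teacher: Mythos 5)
Your proof is correct and follows the paper's intended route: the paper states this corollary without proof as a ``consequence of what was done previously,'' namely the preceding Proposition reducing $L_{aq}$ to $L_{a_0q_0}$ via the conjugation $L_{aq}\circ e^{-Q}\circ\Psi_a = e^{-Q}\circ\Psi_a\circ L_{a_0q_0}$, combined with the equivalence of global hypoellipticity modulo kernel and global solvability for the constant-coefficient operator (the analogue of Proposition~\ref{modker} with symbol entries $i(\lambda_m(\xi)+a_0\mu_r(\eta)-iq_0)$). Your additional check that the argument of Proposition~\ref{modker} transfers verbatim to the diagonal symbol of $L_{a_0q_0}$ is exactly the (mild) verification the paper leaves implicit.
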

\begin{ex}
	Let $G=\mathbb{T}^1 \times \St$ and $X \in \mathfrak{s}^3$ as in the Example \ref{examples3}. Let $a(t)=\sin(t)+\sqrt{2}$ and $q(t,x) = \cos(t)+(\sin(t)+\sqrt{2})h(x)+1$, with $h$ as in Example \ref{examples33}. Here,  $a_0 = \sqrt{2}$ and $q_0 = 1$. Notice that the function $Q(t,x)=\sin(t)+\emph{\mbox{tr}}(x)$ satisfies  $(\partial_t+a(t)X)Q(t,x)= q(t,x)$. By Theorem \ref{thmper} the operator
	$$
	L_{a_0q_0} = \partial_t + \sqrt{2}X + 1
	$$
	is globally hypoelliptic (see Example \ref{examples3}) and then the operator
	$$
	L_{aq} = \partial_t + (\sin(t) + \sqrt{2})X + (\cos(t)+(\sin(t)+\sqrt{2})h(x)+1)
	$$
	is globally hypoelliptic, which implies that $L_{aq}$ is globally solvable.
	
	For $q(t,x)=\cos(t)+(\sin(t)+\sqrt{2})h(x)+i$, the operator 	
	$$
	L_{a_0q_0} = \partial_t + \sqrt{2}X + i
	$$
	is not globally hypoelliptic (see Example \ref{examples32}) and then the operator
	$$
	L_{aq} = \partial_t + (\sin(t) + \sqrt{2})X + (\cos(t)+(\sin(t)+\sqrt{2})h(x)+i)
	$$
	is not globally hypoelliptic. However, since $\sqrt{2}$ is an irrational non-Liouville number, the operator $L_{a_0q_0}$ is globally solvable, which implies that $L_{aq}$ is globally solvable.
\end{ex}
\section*{Acknowledgments}

The authors would like to thank Ricardo Paleari da Silva for comments and suggestions.

\bibliographystyle{amsplain}
\bibliography{biblio}

\end{document}